\def\ep{\varepsilon}
\def\epsilon{\varepsilon}
\def\RR{{\mathbb R} }
\def\di{\displaystyle}
\def\ri{\rightarrow}
\theoremstyle{plain}
\newtheorem{theorem}{Theorem}[section]
\newtheorem{proposition}[theorem]{Proposition}
\newtheorem{lemma}[theorem]{Lemma}
\theoremstyle{definition}
\newtheorem{remark}[theorem]{Remark}
\newtheorem{definition}[theorem]{Definition}
\def\RR{{\mathbb R} }
\def\ep{\varepsilon}
\def\dist{\text{dist}}
\title[Sub-super solutions for $p$-laplacians]{Sub and supersolutions, invariant cones and multiplicity results for $p$-Laplace equations}
\author{Maria-Magdalena Boureanu }
\thanks{M.-M. Boureanu acknowledges her support by Grant CNCS PCE--47/2011}
\address{Department of Applied Mathematics,
University of Craiova,
200585 Craiova,
Romania}
\email{mmboureanu@yahoo.com}
\author{ Benedetta Noris}
\address{Dipartimento di Matematica e Applicazioni, Universit\`a di Milano-Bicocca, Piazza Ateneo Nuovo 1
20126 Milano }
\email{benedettanoris@gmail.com}
\author{ Susanna Terracini}
\address{Dipartimento di Matematica ``Giuseppe Peano'', Universit\`a ti Torino, Via Carlo Alberto 10, 20123 Torino (Italy)}
\email{susanna.terracini@unito.it}
\thanks{B. Noris and S. Terracini are partially supported by the PRIN2009 grant ``Critical Point Theory and Perturbative Methods for Nonlinear Differential Equations''.}
\begin{document}

\maketitle

\begin{abstract}For a class of quasilinear elliptic equations involving the $p$-Laplace operator,  we develop an abstract  critical point theory in the presence of sub-supersolutions. Our approach is based upon the proof of the invariance under the gradient flow of enlarged cones in the $W^{1,p}_0$ topology. With this, we prove abstract existence and multiplicity theorems in the presence of variously ordered pairs of sub-supersolutions.  As an application, we provide a four solutions theorem, one of the solutions being sign-changing.
  \end{abstract}

\subjclass[2000]{Primary 35J92, 35A01, (35A16, 35B05, 35J20, 58E05) }
\bigskip

    {\small{\bf Keywords:} quasilinear elliptic equation, $p$-Laplace operator, existence, multiplicity, sub-supersolutions, invariance of the cones, sign-changing solution.}

\section{Introduction}\label{section:introduction}

In the present paper we develop a min-max theoretical approach to the sub-supersolution method  in order to obtain general existence results for quasilinear problems of the type
\begin{equation}\label{eq:main_equation}
\left\{\begin{array}{ll}
        -\Delta_p u=f(x,u) \quad\text{in } \Omega \\
    u\in W^{1,p}_0(\Omega),
       \end{array}\right.
\end{equation}
where $\Omega\subset\RR^N$ ($N\geq2$) is a bounded regular domain and $\Delta_p u=\text{div}(|\nabla u|^{p-2}\nabla u)$, $p>1$. The function $f:\Omega\times\RR\to\RR$ satisfies the following assumptions
\begin{itemize}
\item[$(f1)$] there exist $1<q<p^\star$ and positive constants $c_1,c_2$ such that
\[
|f(x,t)|\leq c_1+c_2|t|^{q-1} \qquad \forall t\in\RR, \text{ for a.e. } x\in\Omega;
\]
\item[$(f2)$] $f\in C(\overline{\Omega}\times\RR)$; if $p\geq2$ then $f(x,\cdot)$ is locally Lipschitz continuous, uniformly for $x\in\Omega$; if $p<2$ then $f(x,\cdot)$ is locally $(p-1)$-H\"older continuous, uniformly for $x\in\Omega$.
\item[$(f3)$] there exists $M>0$ such that $h:\Omega\times\RR\to\RR$ given by
\begin{equation}\nonumber
h(x,t)=f(x,t)+M|t|^{p-2}t
\end{equation}
 is nondecreasing in $t$.
\end{itemize}
Here as usual $p^\star=Np/(N-p)$ in case $p<N$ and $p^\star=+\infty$ in case $p\geq N$.

In the classical setting, the sub-supersolution existence theorem
requires the existence of  an ordered pair  $\alpha\leq\beta$ of a bounded subsolution $\alpha$ and a bounded supersolution $\beta$, and states the existence of a solution of the equation in between.  The sub-supersolution method, heavily relying on the maximum principle, was originally used for ODE's and semilinear elliptic equations and then developed over the years into a large variety of techniques, see for example the book \cite{DeCosterHabets2006book} for an exhaustive discussion concerning the wide literature on the topic.


It is not too difficult to adapt this strategy to quasilinear equations \eqref{eq:main_equation} in order to prove the existence of at least one solution. More precisely we will show that, in presence of a pair of ordered strict sub-supersolutions to \eqref{eq:main_equation}, there always exists a (locally) minimal energy solution in the order interval. Here the energy functional is
\begin{equation}\label{eq: our functional J}
J(u)=\int_\Omega \left( \frac{|\nabla u|^p}{p}-F(x,u)\right)\,dx, \qquad u\in W_0^{1,p}(\Omega),
\end{equation}
where $F(x,t)=\int_0^t f(x,s) \,ds$.  In case $p=2$ this result has been first proved by Hofer \cite{Hofer1982} and generalized by various authors in \cite{BrezisNirenberg1993,DeFigueiredoSolimini1984,KCChang1983,KCChang1983variational}.

Concerning multiplicity results, and in particular sign changing solutions, the main interest went to ordinary differential equations and to semilinear elliptic equations.
Amann \cite{Amann1971,Amann1972} combined for the first time the technique of sub-supersolutions with the theory of topological degree, proving the well known three solutions theorem. Ten years later, Hofer \cite{Hofer1982} brought together the variational methods with the topological ones, thus obtaining
multiplicity results and degree properties of the solutions, by working in a partially ordered Hilbert space, that is, a Hilbert space with an ordering given by a closed proper cone. This fruitful perspective has been further deepened and, starting from \cite{DeFigueiredoSolimini1984,KCChang1983,KCChang1983variational}, has finally lead to a Morse theoretical approach. Since then, the method has been further generalized and applied, see e.g. \cite{Bartsch2001,BartschWang1996,ContiMerizziTerracini1999,DancerDu1995,DancerDu1997,Wang1991}.
%

Several difficulties occur when trying to adapt these techniques to the case $p\neq2$, an immediate one being the lack of an underpinning Hilbert structure. The first papers in this direction are those by Bartsch and Liu \cite{BartschLiu2004location,BartschLiu2004multiple,BartschLiu2004}. The authors develop an abstract critical point theory in partially ordered Banach spaces and provide several applications, such as the existence of sign changing solutions to \eqref{eq:main_equation} under suitable assumptions. Bartsch and Liu consider the operator $K:W_0^{1,p}(\Omega)\to W_0^{1,p}(\Omega)$ defined as
\begin{equation}\label{eq:definition_of_K}
v=K(u) \qquad \text{ is the solution of }\qquad -\Delta_p v+M|v|^{p-2}v=h(x,u)\quad\text{in } \Omega
\end{equation}
and show that $u-K(u)$ serves as a pseudogradient vector field for $J'(u)$. This property is a consequence of Simon's inequalities  \cite{Simon1978}. One of the main difficulties is the fact that  map $u\mapsto u-K(u)$ is no longer a Lipschitz pseudo gradient vector field, therefore, in order to apply the standard variational techniques, the authors need to construct a Lipschitz one  which still satisfies  Simon's inequalities (see also \cite{LiuSun2001}). We point out that Bartsch and Liu work in the space of $C^1$ functions since, to prove the existence of sign-changing solutions, they consider the cones of positive and negative functions, which have empty interior in the space $W_0^{1,p}$.

In the subsequent paper \cite{BartschLiuWeth2005}, Bartsch, Liu and Weth work in the Sobolev space $W_0^{1,p}(\Omega)$, by adopting a strategy which was already introduced in \cite{ContiMerizziTerracini1999} for the case of semilinear equations. The strategy consists in showing that an $W^{1,p}$- open neighborhood of the cone of positive functions is invariant under the action of the flow associated to the Lipschitz pseudogradient vector field.

Our first goal is to extend the invariance result to enlargements of cones generated by not necessarily constant sub-supersolutions (Theorem \ref{theorem:invariance_of_the_cone}). This  is a  non trivial property in the framework of  quasilinear equations, the difficulty being related to the lack of a general strong comparison principle for equation \eqref{eq:main_equation}. Indeed, in order to prove the invariance of an open neighborhood of the cone generated by a subsolution, we will need to deal with a strict subsolution, in the sense that it satisfies the equation up to a strictly negative remainder. In addition, depending on the values of $p$ and $N$, we will impose some integrability conditions on this remainder.

As an application of the abstract invariance theorem, we prove the existence of a sign changing solution to \eqref{eq:main_equation}, under suitable additional assumptions on $f$, and when $p>({N-2+\sqrt{9N^2-4N+4}})/({2N})$. This result generalizes to quasilinear equations the four solutions theorem of \cite[Theorem 2]{BartschWang1996} and \cite[Theorem 3.7]{DeCosterHabets2006book}. Note in particular that, unlike most of the related results, we do not impose an ordering between $p$ and $q$ in assumption $(f1)$ and we do not impose Ambrosetti-Rabinowitz type conditions.


\section{Statement of the main results}

%
In what follows equations and inequalities are always intended in the weak sense. As usual, we say that $\alpha\in W^{1,p}(\Omega)$ is a subsolution for \eqref{eq:main_equation} if
\begin{equation*}
\left\{\begin{array}{ll}
        -\Delta_p \alpha\leq f(x,\alpha) &\quad\text{in } \Omega \\
        \alpha\leq 0 &\quad\text{on } \partial\Omega.
       \end{array}\right.
\end{equation*}
Analogously, we say that $\beta\in W^{1,p}(\Omega)$ is a supersolution if the opposite inequalities hold. Following \cite{ContiMerizziTerracini1999} we also introduce a more restrictive notion.

\begin{definition}\label{definition_subsolution_supersolution}
We say that $\alpha\in W^{1,p}(\Omega)\cap L^\infty(\Omega)$ is a strict subsolution for \eqref{eq:main_equation} if there exists $a\in L^p(\Omega)$, with $a(x)>0$ for a.e. $x\in\Omega$, such that  it holds
\begin{equation}\nonumber
\left\{\begin{array}{ll}
        -\Delta_p \alpha=f(x,\alpha)-a(x) &\quad\text{in } \Omega \\
        \alpha\leq 0 &\quad\text{on } \partial\Omega.
       \end{array}\right.
\end{equation}
In a similar way we say that $\beta\in W^{1,p}(\Omega)\cap L^\infty(\Omega)$ is a strict supersolution for \eqref{eq:main_equation} if there exists $b\in L^p(\Omega)$, $b(x)>0$ for a.e. $x\in\Omega$, such that
\begin{equation}\nonumber
\left\{\begin{array}{ll}
        -\Delta_p \beta=f(x,\beta)+b(x) &\quad\text{in } \Omega \\
        \beta\geq 0 &\quad\text{on } \partial\Omega.
       \end{array}\right.
\end{equation}
\end{definition}
Given a subsolution $\alpha\in W^{1,p}(\Omega)$ for \eqref{eq:main_equation}, we define the positive cone with vertex in $\alpha$ as
\[
{\mathcal C}_{\alpha}:=\{ u\in W_0^{1,p}(\Omega):\ u(x)\geq\alpha(x) \text{ for a.e. }x\in\Omega  \}.
\]
Analogously, given a supersolution $\beta\in W^{1,p}(\Omega)$, the negative cone with vertex in $\beta$ is
\[
{\mathcal C}^{\beta}:=\{ u\in W_0^{1,p}(\Omega):\ u(x)\leq\beta(x) \text{ for a.e. }x\in\Omega  \}.
\]
First we prove that, given an ordered couple $\alpha<\beta$, there always exists a solution of \eqref{eq:main_equation}, which is of minimal energy between $\alpha$ and $\beta$. Note that, even if $\alpha$ and $\beta$ are nonconstant functions, by $\alpha<\beta$ we naturally understand $\alpha(x)<\beta(x)$ for a.e. $x\in\Omega$.
\begin{theorem}\label{theorem:order}
Let $f$ satisfy $(f1)-(f3)$ and assume that there exist a subsolution $\alpha\in W^{1,p}(\Omega)$ and a supersolution $\beta\in W^{1,p}(\Omega)$ for \eqref{eq:main_equation} such that $\alpha(x)<\beta(x)$ for a.e. $x\in\Omega$. Then there exists $u_0\in {\mathcal C}_{\alpha}\cap {\mathcal C}^{\beta}$, solution of \eqref{eq:main_equation}, which satisfies
\[
J(u_0)=\min_{u\in {\mathcal C}_{\alpha}\cap {\mathcal C}^{\beta}} J(u).
\]
\end{theorem}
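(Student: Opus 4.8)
The plan is to prove this via the direct method of the calculus of variations, but applied to a suitably modified functional so that the constraint $u \in \mathcal{C}_\alpha \cap \mathcal{C}^\beta$ is respected automatically.

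Let me think about the structure. We want to minimize $J$ over the order interval $[\alpha, \beta]$ (intersected with $W_0^{1,p}$). The issue with minimizing $J$ directly over this set is that the constraint set is closed and convex, so weak lower semicontinuity would give a minimizer in the set, but we need to verify that this constrained minimizer actually solves the unconstrained equation (i.e., that the constraint is not active in a way that produces Lagrange multipliers / obstacle terms).

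The standard technique: truncate $f$ outside the order interval. Define a modified nonlinearity
$$\tilde{f}(x,t) = \begin{cases} f(x, \alpha(x)) & t < \alpha(x) \\ f(x,t) & \alpha(x) \le t \le \beta(x) \\ f(x,\beta(x)) & t > \beta(x) \end{cases}$$
and the corresponding truncated functional $\tilde{J}$. Since $\alpha, \beta$ are bounded (well, $\alpha, \beta \in W^{1,p}$, need them bounded — actually the order interval bounds $u$ pointwise between them only where they're finite... hmm, $\alpha, \beta$ are just $W^{1,p}$ here, not necessarily $L^\infty$. But the subsolution/supersolution framework and growth condition $(f1)$ should suffice).

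Key steps:
1. Show $\tilde{J}$ is coercive and weakly lower semicontinuous on $W_0^{1,p}$ (the gradient term handles coercivity via Poincaré; $(f1)$ controls the nonlinear part, and truncation makes the nonlinear part grow at most linearly, so coercivity is easy).
2. By the direct method, $\tilde{J}$ attains a global minimum at some $u_0 \in W_0^{1,p}$.
3. The Euler-Lagrange equation for $\tilde{J}$ is $-\Delta_p u_0 = \tilde{f}(x, u_0)$.
4. The crucial step: show $\alpha \le u_0 \le \beta$ a.e. This uses the sub/supersolution property together with the monotonicity $(f3)$ and a weak comparison argument. Test the inequality $-\Delta_p u_0 = \tilde f(x,u_0)$ against $(\alpha - u_0)^+$ (the positive part) and use that $\alpha$ is a subsolution tested against the same function, then invoke the monotonicity of the $p$-Laplacian operator (Simon-type / strict monotonicity of $|\nabla u|^{p-2}\nabla u$) to conclude $(\alpha - u_0)^+ = 0$.

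Once $\alpha \le u_0 \le \beta$, the truncation is inactive, so $\tilde{f}(x, u_0) = f(x, u_0)$, meaning $u_0$ solves the original equation, and moreover $\tilde{J}(u_0) = J(u_0)$ on the order interval so $u_0$ is the minimizer of $J$ there.

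The hard part, and the step where I expect the main obstacle, is step 4 — the comparison argument establishing $\alpha \le u_0 \le \beta$. For the $p$-Laplacian one cannot simply subtract and integrate by parts as in the linear case; one must carefully test both the equation for $u_0$ and the subsolution inequality for $\alpha$ against $\varphi = (\alpha - u_0)^+ \in W_0^{1,p}$, subtract, and exploit the monotonicity inequality
$$\int_\Omega \left( |\nabla \alpha|^{p-2}\nabla\alpha - |\nabla u_0|^{p-2}\nabla u_0 \right)\cdot \nabla(\alpha - u_0)^+ \, dx \ge 0,$$
which is strictly positive unless $\nabla(\alpha-u_0)^+ = 0$. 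Combined with the sign of the right-hand side controlled by $(f3)$ (the $h$ defined there is nondecreasing, which is what lets the $f$-terms have the right sign on the set $\{\alpha > u_0\}$), one forces $(\alpha - u_0)^+ = 0$. The subtlety is that the boundary conditions $\alpha \le 0 \le \beta$ on $\partial\Omega$ guarantee $(\alpha - u_0)^+ \in W_0^{1,p}$, which is needed to use it as a legitimate test function. One must also verify that $\min_{[\alpha,\beta]} J$ is indeed achieved by this $u_0$ and not some other competitor, which follows since $\tilde{J} = J$ on the order interval and $u_0$ minimizes $\tilde{J}$ globally over a set containing the order interval.
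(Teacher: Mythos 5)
Your proposal is essentially correct, but it takes a genuinely different route from the paper. The paper never truncates the nonlinearity: it applies Ekeland's variational principle directly to $J$ restricted to ${\mathcal C}_{\alpha}\cap{\mathcal C}^{\beta}$, uses the competitor $u=(1-t)u_n+tK(u_n)$ --- admissible because $K$ leaves both cones invariant (Lemma \ref{lemma:comparison_principle}, whose proof is precisely the weak-comparison computation you sketch in your step 4) --- and then converts the resulting bound on $J'(u_n)[u_n-K(u_n)]$ into $J'(u_n)\to 0$ via the pseudogradient inequalities of Lemmas \ref{lemma:K_pseudogradient_first_estimate} and \ref{lemma:K_pseudogradient_second_estimate}, concluding with the Palais--Smale property inside the cone intersection (Lemma \ref{lemma:boundedness}). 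The paper's route has the advantage of building exactly the fixed-point/invariance machinery reused later for the mountain-pass multiplicity theorem, while yours is more elementary and self-contained. Two points in your version need tightening. First, the comparison step does not in fact use $(f3)$: on $\{u_0<\alpha\}$ the truncation gives $\tilde f(x,u_0)=f(x,\alpha)$, so the right-hand side cancels exactly against the subsolution inequality and only the monotonicity of $\xi\mapsto|\xi|^{p-2}\xi$ is needed. Second, the assertion that $\tilde J=J$ on the order interval is false wherever $\alpha>0$ or $\beta<0$, since $\tilde F(x,t)-F(x,t)=\int_0^t(\tilde f-f)(x,s)\,ds$ picks up contributions from $s\notin[\alpha(x),\beta(x)]$; what is true, and suffices, is that $\tilde J-J$ is \emph{constant} on ${\mathcal C}_{\alpha}\cap{\mathcal C}^{\beta}$ (the pointwise difference depends only on $x$ once $\alpha(x)\le u(x)\le\beta(x)$, and is integrable by $(f1)$, $q<p^\star$ and $\alpha,\beta\in L^{p^\star}(\Omega)$), so the global minimizer of $\tilde J$, once trapped in the interval, still minimizes $J$ there. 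Since the hypotheses do not make $\alpha,\beta$ bounded, coercivity and the $C^1$ character of $\tilde J$ must likewise be checked through $\alpha,\beta\in L^{p^\star}(\Omega)$; this works, but should be said.
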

Section \ref{section:proof of Theorem 1.2} is dedicated to the proof of this result which generalizes Proposition 1 by Hofer \cite{Hofer1982} and Theorem 6 by De Figueiredo and Solimini \cite{DeFigueiredoSolimini1984} to quasilinear equations. The main difficulties in adapting these proofs are that $W_0^{1,p}(\Omega)$ is not a Hilbert space and that it is not possible to associate to $J'(u)$ a gradient belonging to $W_0^{1,p}(\Omega)$. Instead, we will take advantage of the fact that ${\mathcal C}_{\alpha}$ and ${\mathcal C}^{\beta}$ are invariant under the action of the operator $K$ defined in \eqref{eq:definition_of_K}. A similar result was obtained in \cite[Theorem 2.1]{BartschLiu2004location} and \cite[Proposition 3.2]{PapageorgiouRochaStaicu2008}, where the nonlinearity $f$ is assumed to have a $p$-superlinear growth at infinity by imposing an Ambrosetti-Rabinowitz condition. Notice that, since we do not require the Ambrosetti-Rabinowitz condition on $f$, $J$ needs not to satisfy the Palais-Smale condition in the entire space. Nonetheless, we will show that $J$ satisfies the Palais-Smale condition in the set ${\mathcal C}_{\alpha}\cap {\mathcal C}^{\beta}$, which is sufficient to prove the existence of the solution $u_0$.

To prove multiplicity results by means of variational methods, we will consider open neighborhoods of the cones ${\mathcal C}_{\alpha}$ and $C^\beta$. To this aim we define, given any $\epsilon>0$,
\begin{gather*}
{\mathcal C}_{\alpha,\ep}=\{ u\in W_0^{1,p}(\Omega): \ \dist(u,{\mathcal C}_{\alpha})<\ep \}, \quad
{\mathcal C}^{\beta,\ep}=\{ u\in W_0^{1,p}(\Omega): \ \dist(u,{\mathcal C}^\beta)<\ep \},
\end{gather*}
where, for every $u\in W_0^{1,p}(\Omega)$, we set
\[
\dist(u,{\mathcal C}):=\inf_{w\in {\mathcal C}}\left(\int_\Omega|\nabla (u-w)|^p\,dx\right)^{1/p}.
\]
Next, we recall another definition from \cite{ContiMerizziTerracini1999}.
\begin{definition}\label{def:strict_K_invariance}
Given a strict subsolution $\alpha$ for \eqref{eq:main_equation}, we say that ${\mathcal C}_{\alpha}$ is strictly $K$-invariant if there exists $\epsilon_{\alpha}$ such that
\[
K({\mathcal C}_{\alpha,\ep}) \subseteq {\mathcal C}_{\alpha,\ep/2} \quad\text{ for all } 0<\epsilon<\epsilon_{\alpha}.
\]
Analogously, given a strict supersolution $\beta$, ${\mathcal C}^{\beta}$ is strictly $K$-invariant if there exists $\epsilon_{\beta}$ such that $K({\mathcal C}^{\beta,\ep})\subseteq {\mathcal C}^{\beta,\ep/2}$ for all $0<\epsilon<\epsilon_{\beta}$.
\end{definition}
Furthermore, we also need a weaker notion of $K$-invariance in the following sense.
\begin{definition}\label{def:locally_K_invariance}
Given a strict subsolution $\alpha$ for \eqref{eq:main_equation}, we say that ${\mathcal C}_{\alpha}$ is locally $K$-invariant if for every bounded subset $\mathcal{U}\subset W_0^{1,p}(\Omega)$  there exists $\epsilon_{\alpha}$ depending on $\mathcal{U}$ such that
\[
K({\mathcal C}_{\alpha,\ep}\cap\mathcal{U}) \subseteq {\mathcal C}_{\alpha,\ep/2} \quad\text{ for all } 0<\epsilon<\epsilon_{\alpha}.
\]
Analogously, given a strict supersolution $\beta$, ${\mathcal C}^{\beta}$ is locally $K$-invariant if for every bounded subset $\mathcal{U}\subset W_0^{1,p}(\Omega)$ there exists $\epsilon_{\beta}$ depending on $\mathcal{U}$ such that $K({\mathcal C}^{\beta,\ep}\cap\mathcal{U})\subseteq {\mathcal C}^{\beta,\ep/2}$ for all $0<\epsilon<\epsilon_{\beta}$.
\end{definition}

We state now our abstract multiplicity result and we prove it in Section \ref{section:not ordered sub-supersolutions}.
%
\begin{theorem}\label{theorem:not ordered}
Let $f$ satisfy $(f1)-(f3)$ and assume that there exist two strict
subsolutions $\alpha_1$, $\alpha_2$ and two strict supersolutions
$\beta_1$, $\beta_2$ for \eqref{eq:main_equation} such that ${\mathcal{C}}_{\alpha_1}$, ${\mathcal{C}}_{\alpha_2}$, ${\mathcal{C}}^{\beta_1}$ and ${\mathcal{C}}^{\beta_2}$ are locally $K$-invariant. Moreover, assume that $\alpha_1$, $\alpha_2$, $\beta_1$, $\beta_2$ are fulfilling
\[
\alpha_1(x)<\beta_1(x), \quad \alpha_2(x)<\beta_2(x)\quad \mbox{for a.e. }x\in\Omega\]
and
\[
\beta_1(x)<\alpha_2(x) \quad \mbox{for $x$ in a set of positive measure}.
\]
Then there exist three different solutions $u_1$, $u_2$, $u_3$ to problem
\eqref{eq:main_equation} satisfying
\[
u_1 \in {\mathcal{C}}_{\alpha_1} \cap {\mathcal{C}}^{\beta_1}, \quad
u_2 \in {\mathcal{C}}_{\alpha_2} \cap {\mathcal{C}}^{\beta_2}
\]
and
\[
u_3\in ({\mathcal{C}}_{\alpha_1} \cap {\mathcal{C}}^{\beta_2})\setminus({\mathcal{C}}_{\alpha_2} \cup {\mathcal{C}}^{\beta_1}).
\]
\end{theorem}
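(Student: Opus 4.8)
\emph{The plan} is to produce $u_1$ and $u_2$ as two distinct local minimizers of $J$ and then to obtain $u_3$ by a mountain--pass construction between them, using the invariance of the enlarged cones both to keep $u_3$ away from the two inner cones and to confine it to the outer order interval. First I would apply Theorem~\ref{theorem:order} twice: from $\alpha_1<\beta_1$ and $\alpha_2<\beta_2$ it yields minimal energy solutions $u_1\in\mathcal{C}_{\alpha_1}\cap\mathcal{C}^{\beta_1}$ and $u_2\in\mathcal{C}_{\alpha_2}\cap\mathcal{C}^{\beta_2}$. These are automatically distinct, since on the positive measure set where $\beta_1<\alpha_2$ one has $u_1\le\beta_1<\alpha_2\le u_2$. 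The next point is to upgrade each $u_i$ from a constrained minimizer to a genuine \emph{local} minimizer of $J$ in the $W_0^{1,p}$ topology; this is exactly where the local $K$-invariance (Definition~\ref{def:locally_K_invariance}) of the four cones enters, through the fact that the descending flow built from $u\mapsto u-K(u)$ decreases $J$ and maps the enlarged cone intersection $\mathcal{C}_{\alpha_i,\ep}\cap\mathcal{C}^{\beta_i,\ep}$ into itself, so no nearby point can have energy below the constrained minimum.

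To run a min--max I would work inside the outer interval determined by $\underline{\alpha}=\min(\alpha_1,\alpha_2)$ and $\overline{\beta}=\max(\beta_1,\beta_2)$; the lattice property of sub/supersolutions makes these again a strict subsolution and a strict supersolution, with $\underline{\alpha}<\overline{\beta}$, and hence their cones are $K$-invariant by Theorem~\ref{theorem:invariance_of_the_cone}. Truncating $f$ outside $[\underline{\alpha},\overline{\beta}]$ produces a coercive functional satisfying the Palais--Smale condition whose critical points in the interval coincide with solutions of \eqref{eq:main_equation}, while the standard weak comparison argument confines every such critical point to $[\underline{\alpha},\overline{\beta}]$. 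The crucial geometric input is that the enlarged inner cones $\mathcal{C}^{\beta_1,\ep}$ and $\mathcal{C}_{\alpha_2,\ep}$ are disjoint for $\ep$ small: if $u$ lay in both, choosing $w_1\le\beta_1$ and $w_2\ge\alpha_2$ within $\ep$ of $u$ gives $\|\nabla(w_1-w_2)\|_{L^p}<2\ep$, whereas $w_2-w_1\ge\alpha_2-\beta_1>0$ on a set of positive measure forces a fixed positive lower bound on $\|w_1-w_2\|_{L^{p^\star}}$ via the Sobolev inequality, a contradiction. Thus $u_1$ and $u_2$ sit in two \emph{disjoint invariant} open sets.

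With this in place I would define the mountain--pass value $c=\inf_{\gamma}\max_{t}J(\gamma(t))$ over paths in $\mathcal{C}_{\alpha_1}\cap\mathcal{C}^{\beta_2}$ joining $u_1$ to $u_2$, and invoke a deformation lemma compatible with the cones, namely one in which the Lipschitz pseudo-gradient flow simultaneously preserves $\mathcal{C}_{\alpha_1,\ep}$, $\mathcal{C}^{\beta_2,\ep}$, $\mathcal{C}^{\beta_1,\ep}$ and $\mathcal{C}_{\alpha_2,\ep}$, as guaranteed by Theorem~\ref{theorem:invariance_of_the_cone}. Since the two local minima are strict and lie in disjoint invariant neighborhoods, one gets $c>\max(J(u_1),J(u_2))$, so the resulting critical point $u_3$ is distinct from $u_1$ and $u_2$; and because the flow retracts $\mathcal{C}^{\beta_1,\ep}$ and $\mathcal{C}_{\alpha_2,\ep}$ onto neighborhoods of $u_1$ and $u_2$ respectively, the optimal paths attain their maximum outside both, yielding $u_3\notin\mathcal{C}^{\beta_1}\cup\mathcal{C}_{\alpha_2}$. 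The outer confinement $u_3\in\mathcal{C}_{\alpha_1}\cap\mathcal{C}^{\beta_2}$ follows from the invariance of these two cones along the same flow.

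\emph{Main obstacle.} The delicate point is not the abstract mountain--pass scheme but the \emph{simultaneous} control of a single flow with respect to all four cones: one must ensure that one Lipschitz pseudo-gradient field, satisfying Simon's inequalities, keeps $\mathcal{C}_{\alpha_1,\ep}$, $\mathcal{C}^{\beta_2,\ep}$, $\mathcal{C}^{\beta_1,\ep}$ and $\mathcal{C}_{\alpha_2,\ep}$ invariant at once, so that the deformation respects both the separation of $u_1,u_2$ and the outer confinement. The second hurdle is the strict inequality $c>\max(J(u_1),J(u_2))$: it amounts to excluding a degenerate configuration in which a critical set sitting at the energy level of one minimum destroys the separation, and I would resolve it using the strictness of the local minima together with the disjointness of the two invariant sets established above.
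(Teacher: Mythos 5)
Your overall architecture (two constrained minimizers from Theorem~\ref{theorem:order}, separation of the enlarged cones $\mathcal{C}_{\alpha_2,\ep}$ and $\mathcal{C}^{\beta_1,\ep}$, and a mountain pass inside $\mathcal{C}_{\alpha_1}\cap\mathcal{C}^{\beta_2}$ run with a flow preserving all four enlarged cones) matches the paper's, but the way you close the min--max argument has a genuine gap. You make the conclusion rest on upgrading $u_1,u_2$ to \emph{strict} local minimizers and on the strict inequality $c>\max(J(u_1),J(u_2))$, backed by the claim that the flow ``retracts $\mathcal{C}^{\beta_1,\ep}$ and $\mathcal{C}_{\alpha_2,\ep}$ onto neighborhoods of $u_1$ and $u_2$''. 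None of this is available: the constrained minimizers need not be strict (there may be continua of minimizers), the retraction property is not a consequence of local $K$-invariance (the flow preserves these sets but nothing forces it to contract them onto the minimizers), and without it the degenerate case $c=\max(J(u_1),J(u_2))$ is not excluded. The paper avoids the issue entirely by \emph{not} comparing $c$ with $J(u_1),J(u_2)$: it sets $c=\inf_{\gamma\in\Gamma}\max_{s\in S_\gamma}J(\gamma(s))$, where $S_\gamma$ retains only the parameters at which $\gamma(s)$ lies in the middle region $\overline{(\mathcal{C}_{\alpha_1,\bar\ep/2}\cap \mathcal{C}^{\beta_2,\bar\ep/2})\setminus(\mathcal{C}_{\alpha_2,\bar\ep}\cup \mathcal{C}^{\beta_1,\bar\ep})}$, and then combines the forward invariance of the enlarged cones with the backward invariance of their complements (Remark~\ref{remark:backward invariance}) to produce a Palais--Smale sequence \emph{localized} in that middle region. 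The localization, not an energy gap, is what yields $u_3\notin\mathcal{C}_{\alpha_2}\cup\mathcal{C}^{\beta_1}$ and hence $u_3\neq u_1,u_2$. To repair your scheme you must either actually prove the strict energy inequality or switch to this restricted min--max.

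Two further concrete problems. First, $\underline{\alpha}=\min(\alpha_1,\alpha_2)$ is in general \emph{not} a subsolution: the lattice property goes the other way (the maximum of two subsolutions, and the minimum of two supersolutions, are again sub/supersolutions), and in any case strictness and the integrability of the remainder would not survive taking minima; fortunately you do not need this, since the hypotheses already give local $K$-invariance of $\mathcal{C}_{\alpha_1}$ and $\mathcal{C}^{\beta_2}$, which is the only outer confinement the argument uses. Second, since no Ambrosetti--Rabinowitz condition is assumed, $J$ need not satisfy Palais--Smale globally; the paper proves it only inside $\mathcal{C}_{\alpha,\ep}\cap\mathcal{C}^{\beta,\ep}$ (Lemma~\ref{lemma:boundedness}), which is why the whole construction, including the deformation of Lemma~\ref{lemma:deformation lemma}, is confined to bounded sets inside the enlarged outer cones. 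Your truncation device could substitute for this, but then the operator $K$, the invariance statements and the Lipschitz pseudogradient flow would all have to be rebuilt for the truncated nonlinearity, which your outline does not address.
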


In order to apply the previous theorem, we provide below more explicit conditions on the sub-supersolutions, which ensure that the corresponding cones are locally $K$-invariant.
\begin{theorem}\label{theorem:invariance_of_the_cone}
Let $f$ satisfy $(f1)-(f3)$. Let $\alpha$ be a strict subsolution and $\beta$ be a strict supersolution for \eqref{eq:main_equation}, with remainders $a,b$ respectively, given in Definition \ref{definition_subsolution_supersolution}. Then
\begin{itemize}
\item[(i)]  ${\mathcal C}_{\alpha}$ and ${\mathcal C}^{\beta}$ are locally $K$-invariant if $2N/(N+2)\leq p<2$ (the first inequality being strict for $N=2$) and
\begin{equation}\label{eq:integrability_of_1/a_p<2}
\left(\frac{1}{a}\right)^{\frac{2-p}{p-1}\frac{p^\star}{p^\star-2}},\left(\frac{1}{b}\right)^{\frac{2-p}{p-1}\frac{p^\star}{p^\star-2}} \in L^{1}(\Omega);
\end{equation}
\item[(ii)]  ${\mathcal C}_{\alpha}$ and ${\mathcal C}^{\beta}$ are strictly $K$-invariant if either $p=2$, or $p>2$ and
\begin{equation}\label{eq:integrability_of_1/a}
\frac{1}{a},\frac{1}{b} \in L^r(\Omega) \qquad \text{with} \qquad r\left\{\begin{array}{ll}
=(p-2)\frac{N}{p} \quad&\text{ if } 2< p< N, \\
>p-2 &\text{ if } p=N, \\
=p-2 &\text{ if } p>N.
\end{array}\right.
\end{equation}
\end{itemize}
\end{theorem}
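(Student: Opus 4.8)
The plan is to reduce the statement to a single quantitative estimate for the negative part of $K(u)-\alpha$, and to extract from it a contraction of the distance to the cone. By the symmetry $u\mapsto -u$, $f(x,t)\mapsto -f(x,-t)$ (which turns the supersolution $\beta$ with remainder $b$ into a subsolution), it suffices to treat $\mathcal C_\alpha$. Since $\max(K(u),\alpha)\in\mathcal C_\alpha$, one has $\dist(K(u),\mathcal C_\alpha)\le\|\nabla(\alpha-K(u))^+\|_{L^p}$, so it is enough to bound the right-hand side by $\epsilon/2$. On the hypothesis side, given $u$ with $\dist(u,\mathcal C_\alpha)<\epsilon$ I would fix an almost-projection $\bar u\in\mathcal C_\alpha$ with $\|\nabla(u-\bar u)\|_{L^p}<\epsilon$ and exploit $(\alpha-u)^+\le(\bar u-u)^+$ (because $\alpha\le\bar u$) together with $\|\nabla(\bar u-u)^+\|_{L^p}\le\|\nabla(\bar u-u)\|_{L^p}<\epsilon$. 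This is the correct way to feed in the hypothesis, since $\|\nabla(\alpha-u)^+\|_{L^p}$ is \emph{not} in general controlled by $\dist(u,\mathcal C_\alpha)$.

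Next I set up the fundamental inequality. Writing $v=K(u)$, I subtract the equation $-\Delta_p\alpha+M|\alpha|^{p-2}\alpha=h(x,\alpha)-a$ of the strict subsolution from the defining equation $-\Delta_p v+M|v|^{p-2}v=h(x,u)$ of $K$, and test the difference against $\phi:=(\alpha-v)^+\ge 0$. On the support $\{v<\alpha\}$ the zeroth–order contribution $M(|v|^{p-2}v-|\alpha|^{p-2}\alpha)\phi$ is $\le 0$, while on $\{u\ge\alpha\}$ the monotonicity $(f3)$ gives $h(x,\alpha)-h(x,u)\le 0$. This yields
\[
\int_\Omega\bigl(|\nabla v|^{p-2}\nabla v-|\nabla\alpha|^{p-2}\nabla\alpha\bigr)\cdot\nabla\phi\,dx\le\int_{\{u<\alpha\}}\bigl(h(x,\alpha)-h(x,u)\bigr)\phi\,dx-\int_{\{v<\alpha\}}a\,\phi\,dx,
\]
with the strictly positive remainder $a$ now available on the right as a usable buffer.

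The left-hand side is made coercive by Simon's inequalities. For $p\ge 2$ it is bounded below by $c\|\nabla\phi\|_{L^p}^p$; for $1<p<2$ it only controls the weighted quantity $c\int|\nabla\phi|^2(|\nabla v|+|\nabla\alpha|)^{p-2}\,dx$, from which $\|\nabla\phi\|_{L^p}^p$ is recovered by H\"older with exponents $2/p$ and $2/(2-p)$, at the price of a factor $(\|\nabla v\|_{L^p}+\|\nabla\alpha\|_{L^p})^{p(2-p)/2}$. This factor is uniformly bounded only when $u$ ranges over a bounded set, which is exactly why part (i) gives local, and not strict, invariance, whereas for $p\ge 2$ no such factor appears and the estimate is uniform. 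To bound the right-hand side, I note that on $\{u<\alpha\}$ one has $|u|\le\|\alpha\|_\infty+(\alpha-u)^+$, so $0\le h(x,\alpha)-h(x,u)$ can be controlled by the local modulus $(f2)$ where $\psi:=(\alpha-u)^+$ is small and by the growth $(f1)$ where $\psi$ is large; combined with the gap term this confines the whole right-hand side to the ``violation set'' $\{\psi\gtrsim a\}$. On that set I apply Chebyshev and H\"older, estimating $\psi\le(\bar u-u)^+$ in $L^{p^\star}$ via the Sobolev embedding (here $q<p^\star$ is used) and pairing the remaining power against a negative moment of $a$. After dividing by $\|\nabla\phi\|_{L^p}^{p-1}$ this produces $\|\nabla\phi\|_{L^p}\le C\,\omega(\epsilon)$ with $\omega(\epsilon)\to 0$, so a small $\epsilon_\alpha$ forces the factor below $1/2$ and yields $K(\mathcal C_{\alpha,\epsilon})\subseteq\mathcal C_{\alpha,\epsilon/2}$.

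The main obstacle, and the source of the precise hypotheses, is this last balancing. Because $u$ lies only in $W_0^{1,p}$ and may be unbounded, the nonlinear term cannot be handled by Lipschitzness alone and must be absorbed against $a$ through the Chebyshev–H\"older step; matching the conjugate exponent of $p$ (or of the weighted exponent for $p<2$) with the critical Sobolev exponent $p^\star$ is precisely what pins down the moments $1/a\in L^r$ with $r=(p-2)N/p$ for $2<p<N$, and $(1/a)^{\frac{2-p}{p-1}\frac{p^\star}{p^\star-2}}\in L^1$ for $p<2$, and what singles out the admissible range $p\ge 2N/(N+2)$. For $1<p<2$ the difficulty is compounded by the weighted-to-unweighted conversion above, whose exponent $\tfrac{2-p}{p-1}$ enters multiplicatively into the moment of $1/a$ and thereby yields the more intricate integrability condition in (i).
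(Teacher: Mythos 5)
Your overall strategy coincides with the paper's: test the difference of the equations for $K(u)$ and for the strict subsolution against $[K(u)-\alpha]^-$, use $(f3)$ and the pointwise bound $(\alpha-u)^+\le(\bar u-u)^+$ to replace $u$ by a function whose distance-to-the-cone norm is $<\ep$, make the left side coercive via Simon's inequalities (with the extra factor $(\|K(u)\|+\|\alpha\|)^{2-p}$ explaining why $p<2$ only gives \emph{local} invariance), and absorb the nonlinear increment against the buffer $a$ on the violation set, with H\"older exponents matched to $p^\star$; this reproduces exactly the integrability conditions \eqref{eq:integrability_of_1/a_p<2} and \eqref{eq:integrability_of_1/a} and the threshold $p\ge 2N/(N+2)$. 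The one genuine gap is in your final inference. You conclude $\|\nabla\phi\|_{L^p}\le C\,\omega(\ep)$ with merely $\omega(\ep)\to0$ and assert that a small $\ep_\alpha$ "forces the factor below $1/2$". This does not follow: the $K$-invariance you must verify is $\dist(K(u),\mathcal C_\alpha)\le\ep/2$ whenever $\dist(u,\mathcal C_\alpha)<\ep$, so you need $\omega(\ep)=o(\ep)$ (equivalently $\dist(K(u),\mathcal C_\alpha)=o(\dist(u,\mathcal C_\alpha))$, which is what the paper proves), not just $\omega(\ep)\to 0$. As described, your Chebyshev--H\"older step on the violation set yields only
\[
\bigl\|\bigl[h(\cdot,\alpha-\psi)-h(\cdot,\alpha)+a\bigr]^-\bigr\|_{L^{(p^\star)'}}\le C\,\|\psi\|_{L^{p^\star}}\cdot\Bigl\|a^{-\frac{2-p}{p-1}}\Bigr\|_{L^{\rho}(\Omega)}\le C'\ep ,
\]
i.e.\ $O(\ep)$ with a constant $C'$ that has no reason to be small; the same issue occurs for $p>2$ with $O(\ep^{p-1})$, and in the large-$\psi$ region governed by $(f1)$ when $q\le 2$ (resp.\ $q\le p$), where the bound is again only linear in $\psi$.

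The missing ingredient is precisely what the paper's Lemmas 6.2 and 6.3 supply through dominated convergence: the negative moment of $a$ must be integrated only over the violation set $E_\ep=\{\,(\alpha-u)^+\ge (a/\tilde c_k)^{1/(p-1)}\,\}$, and one must show $\int_{E_\ep}(1/a)^{\gamma}\,dx\to0$ as $\ep\to0$. This can be done quantitatively (splitting $E_\ep$ into $\{a\ge\delta\}$, whose intersection with $E_\ep$ has measure $O(\ep^{p^\star}\delta^{-p^\star/(p-1)})$ by Chebyshev, and $\{a<\delta\}$, handled by absolute continuity of the integral of $(1/a)^\gamma$, using $a>0$ a.e.), and similarly $|\{\psi\ge k\}|\to0$ upgrades the large-$\psi$ contribution to $o(\ep)$. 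Without this step the contraction factor $1/2$ in Definitions \ref{def:strict_K_invariance}--\ref{def:locally_K_invariance} cannot be reached, so you should either add this absolute-continuity argument or pass, as the paper does, to the sequential formulation $\varphi_n\to0$ a.e.\ with an $L^{s'}$ dominating function.
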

Note that, in case $p=2N/(N+2)$, equation \eqref{eq:integrability_of_1/a_p<2} is to be understood as $1/a,1/b \in L^\infty(\Omega)$.

Since we expect Theorem \ref{theorem:not ordered} to have different applications, in addition to Section \ref{section: K invariance} where we establish the above result, we add Section \ref{section: comments}  where we will investigate integrability conditions different from \eqref{eq:integrability_of_1/a_p<2} and \eqref{eq:integrability_of_1/a} which in may be less restrictive, depending on the situation. Such conditions will depend on the growth of $f$ at infinity and on the dimension $N$.

As an application to Theorem \ref{theorem:not ordered} and Theorem \ref{theorem:invariance_of_the_cone}, we consider assumptions on $f$ and $p$ which ensure that problem \eqref{eq:main_equation} admits a sign changing solution. More precisely we consider the following hypotheses on $f$:
\begin{itemize}
\item[$(f4)$] there exist $0<\mu<\lambda_1$ and $R>0$ such that
\[
\frac{f(x,t)}{|t|^{p-2}t}\leq\mu \quad\text{ for every } |t|>R \text{ and a.e. } x\in\Omega;
\]
\item[$(f5)$] there exists $\lambda_2<\lambda<\infty$ such that
\[
\lim_{t\to 0}\frac{f(x,t)}{|t|^{p-2}t}=\lambda \quad\text{ uniformly for a.e. } x\in\Omega.
\]
Moreover, if $p>2$, there exists a small neighborhood  ${\mathcal V}$ of $t=0$ such that $f(x,\cdot)$ is differentiable in ${\mathcal V}$.
\end{itemize}
Here, as usual,
\[
\lambda_2=\min\{ \lambda>\lambda_1: \text{ there exists } \phi \in W_0^{1,p}(\Omega), \phi\not\equiv0 \text{ such that } -\Delta_p\phi=\lambda|\phi|^{p-2}\phi \}
\]
and
\begin{equation}\nonumber
\lambda_1 = \inf_{\substack{\varphi \in W_0^{1,p}(\Omega) \\ \varphi\not\equiv0}}
\frac{\int_\Omega |\nabla \varphi|^p\, dx}{\int_\Omega |\varphi|^p\, dx}.
\end{equation}

Notice that conditions $(f2)$ and $(f4)$ imply condition $(f1)$ and we provide the following result that will be proved in Section \ref{section: application}.
\begin{theorem}\label{theorem:application}
Let $p>({N-2+\sqrt{9N^2-4N+4}})/({2N})$ and let $f$ satisfy $(f2)-(f5)$. Then, in addition to the trivial solution, there exist a positive solution, a negative solution and a sign changing solution to problem \eqref{eq:main_equation}.
\end{theorem}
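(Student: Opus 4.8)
The plan is to construct four strict sub- and supersolutions to which Theorem \ref{theorem:not ordered} applies, splitting them into a ``small'' pair near the origin (governed by $(f5)$) and a ``large'' pair far out (governed by $(f4)$). For the small pair, let $\phi_1>0$ be a first eigenfunction and set $\alpha_2=\delta\phi_1$, $\beta_1=-\delta\phi_1$ with $\delta>0$ small. Since $-\Delta_p(\delta\phi_1)=\lambda_1(\delta\phi_1)^{p-1}$ while $(f5)$ gives $f(x,\delta\phi_1)=\lambda(\delta\phi_1)^{p-1}(1+o(1))$ with $\lambda>\lambda_2>\lambda_1$, for $\delta$ small we get $-\Delta_p\alpha_2<f(x,\alpha_2)$, so $\alpha_2$ is a strict subsolution with remainder $a(x)\approx(\lambda-\lambda_1)(\delta\phi_1)^{p-1}$, and symmetrically $\beta_1$ is a strict supersolution with remainder $b$ of the same size. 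For the large pair I use that $(f4)$ makes $-\Delta_p-\mu|\cdot|^{p-2}\cdot$ coercive (as $\mu<\lambda_1$): one solves $-\Delta_p\beta_2=\mu\beta_2^{p-1}+c_0$ and $-\Delta_p\alpha_1=\mu|\alpha_1|^{p-2}\alpha_1-c_0$, where $c_0$ dominates $|f|$ on $[-R,R]$; combined with $(f4)$ these are a strict super- and subsolution whose remainders are bounded below by a positive constant. Note that $(f4)$ also renders $J$ coercive, hence bounded below and satisfying Palais--Smale, as required for Theorem \ref{theorem:not ordered}.

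For $\delta$ small all four are ordered, $\alpha_1<\beta_1<\alpha_2<\beta_2$, by comparing boundary behaviours (all of $\phi_1$, $\beta_2$, $|\alpha_1|$ vanish like $d(x)=\dist(x,\partial\Omega)$); in particular $\beta_1<\alpha_2$ on a set of full measure, which supplies the non-ordered data of Theorem \ref{theorem:not ordered}. It remains to verify local $K$-invariance of the four cones through Theorem \ref{theorem:invariance_of_the_cone}. For $\alpha_1,\beta_2$ this is immediate since $1/a,1/b\in L^\infty$. The \emph{crux} is the pair $\alpha_2,\beta_1$: by the Hopf behaviour $c_1 d\le\phi_1\le c_2 d$ one has $a\sim d^{p-1}$, hence $1/a\sim d^{-(p-1)}$, and since $\int_\Omega d^{-s}\,dx<\infty$ iff $s<1$, condition \eqref{eq:integrability_of_1/a_p<2} (for $p<2$) reduces to $(2-p)\frac{p^\star}{p^\star-2}<1$. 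Substituting $p^\star=Np/(N-p)$ turns this into $Np^2+(2-N)p-2N>0$, i.e. exactly $p>\frac{N-2+\sqrt{9N^2-4N+4}}{2N}$, which lies in the interval $(2N/(N+2),2)$ where case (i) applies. This identity is precisely where the hypothesis on $p$ is consumed; for $p\ge2$ the same boundary computation is run against \eqref{eq:integrability_of_1/a}, using if necessary the refined, growth-dependent conditions of Section \ref{section: comments}.

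Theorem \ref{theorem:not ordered} then produces $u_1\in\mathcal C_{\alpha_1}\cap\mathcal C^{\beta_1}$, $u_2\in\mathcal C_{\alpha_2}\cap\mathcal C^{\beta_2}$ and $u_3\in(\mathcal C_{\alpha_1}\cap\mathcal C^{\beta_2})\setminus(\mathcal C_{\alpha_2}\cup\mathcal C^{\beta_1})$. Since $u_1\le\beta_1=-\delta\phi_1<0$ and $u_2\ge\alpha_2=\delta\phi_1>0$ in $\Omega$, $u_1$ is the negative and $u_2$ the positive solution. For $u_3$ I would argue sign-change as follows. First, $u_3\neq0$: the min-max level producing it lies strictly above $J(0)$, because $(f5)$ with $\lambda>\lambda_2$ makes the origin a saddle with a descending subspace spanned by low eigenfunctions, so the linking geometry returns a nontrivial critical point. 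Next, if $u_3\ge0$ then, being a nontrivial nonnegative solution, $u_3>0$ in $\Omega$ by the strong maximum principle, and a comparison with the strict subsolution $\alpha_2$ would force $u_3\in\mathcal C_{\alpha_2}$, contradicting $u_3\notin\mathcal C_{\alpha_2}$; symmetrically $u_3\le0$ contradicts $u_3\notin\mathcal C^{\beta_1}$. Hence $u_3$ changes sign, and together with the trivial solution this yields the four solutions.

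I expect two steps to carry the real difficulty. The first is the boundary integrability in the second paragraph: matching the sharp exponent requires the two-sided Hopf bounds for $\phi_1$ together with uniform control of the remainder $a$ down to $\partial\Omega$, and it is here that the threshold on $p$ is both necessary and sufficient. The second is ruling out that $u_3$ is single-signed: the comparison trapping a one-signed solution inside $\mathcal C_{\alpha_2}$ (resp. $\mathcal C^{\beta_1}$) must be made uniform in the fixed $\delta$, and a clean way to secure this is to first extract the minimal positive and maximal negative solutions and locate $u_3$ strictly between them, so that its failure to lie in either one-signed cone genuinely forces a change of sign.
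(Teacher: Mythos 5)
Your construction of the four sub-supersolutions, the identification of $\beta_1<\alpha_2$ as the non-ordered pair, and the boundary computation for $p<2$ (reducing \eqref{eq:integrability_of_1/a_p<2} with $a\sim\dist(\cdot,\partial\Omega)^{p-1}$ to $Np^2+(2-N)p-2N>0$, i.e.\ exactly the stated threshold) all match the paper. But the case $p\geq 2$ is a genuine gap, not a routine repetition: with $a\sim\dist(\cdot,\partial\Omega)^{p-1}$ the condition \eqref{eq:integrability_of_1/a} requires $\int\dist(\cdot,\partial\Omega)^{-(p-1)(p-2)N/p}<\infty$, which fails as soon as $(p-1)(p-2)N/p\geq 1$, and the Hardy-weighted variants of Section \ref{section: comments} fail similarly for $p$ large (e.g.\ \eqref{eq:first integrability_of_1/a for Hardy} forces $p\leq 2+\sqrt2$). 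This is precisely why $(f5)$ carries the extra differentiability hypothesis for $p>2$: the paper's Lemma \ref{lemma: application estimates} runs a Taylor expansion of $h(x,\cdot)$ near $\alpha_2(x)$, where the local Lipschitz constant degenerates like $\alpha_2^{p-2}$ and cancels against $a_2\sim\alpha_2^{p-1}$, yielding $[h(x,\alpha_2+w)-h(x,\alpha_2)+a_2]^-\leq \tilde c_k|w|^{p-1}$ with no boundary weight at all. Without this cancellation your scheme does not close for $p\geq2$.

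Two further steps would fail as written. First, your nontriviality argument for $u_3$ goes in the wrong direction: the min-max level here is pushed strictly \emph{below} $J(0)=0$, not above it, by choosing an almost-optimal path for the Cuesta--de Figueiredo--Gossez characterization of $\lambda_2$ (Proposition \ref{prop: second eigenvalue}), rescaled to $\int|\gamma(s)|^p=l^p$, so that $(f5)$ with $\lambda>\lambda_2$ gives $J(\gamma(s))<0$ along the whole path; a ``descending subspace spanned by low eigenfunctions'' is not available for the $p$-Laplacian. Second, and most seriously, the claim that a positive solution $u_3$ can be ``compared with the strict subsolution $\alpha_2$'' to force $u_3\in\mathcal C_{\alpha_2}$ invokes a strong comparison principle that the $p$-Laplacian does not possess (the paper flags exactly this in the introduction), and your fallback via minimal/maximal one-signed solutions does not supply the missing uniformity either. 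The paper's actual device is to run the construction along a sequence $l_n\to0$, use a Brezis--Kato bootstrap and $C^{1,\gamma}$ regularity (Lemma \ref{lemma:compactness_brezis_kato}) to extract a limit $\bar u>0$ with $\partial_\nu\bar u<0$, and then contradict $u_n<l_n\phi_1\leq Cl_n\dist(\cdot,\partial\Omega)$ on the nonempty set $\Omega_n^-$ against the uniform lower bound $u_n\geq C\dist(\cdot,\partial\Omega)$ inherited from $\bar u$. This limiting/compactness argument is the missing idea in your proposal.
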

We will prove this way that the four solutions theorem, known for $p=2$ (see \cite[Theorem 2]{BartschWang1996}), holds for a larger range of the parameter $p$ and we emphasize the fact that we consider both cases $p<2$ and $p>2$. At the same time, our previous theorem generalizes some results in \cite{BartschLiu2004,BartschLiuWeth2005,PapageorgiouRochaStaicu2008}, where the sub-supersolutions are considered to be constant. Hence, an important improvement provided by our study is that we deal with nonconstant sub-supersolutions when treating a quasilinear problem.

\section{Preliminaries}\label{section:preliminaries}
In this section we introduce both a variational and a fixed point framework for problem \eqref{eq:main_equation}, together with the related known results that we will use in the next sections. We will tacitly assume $(f1)-(f3)$. Also, unless otherwise stated, everywhere in this paper $C$ denotes a generic constant that may change its value from line to line.

We will work in the space $W_0^{1,p}(\Omega)$ endowed with the norm
\[
\|u\|=\left(\int_\Omega|\nabla u|^p\,dx\right)^{1/p}.
\]
%
We will denote by $W^{-1,p'}(\Omega)$ the dual of $W_0^{1,p}(\Omega)$, where, as usual, $1/p+1/p'=1$.

In our search for weak solutions to problem \eqref{eq:main_equation} we are relying on the critical point theory. We associate to our problem the energetic functional $J$ introduced in \eqref{eq: our functional J}. By a standard calculus we can establish that $J\in C^1(W_0^{1,p}(\Omega);\RR)$ (it is worth to notice that $J$ is not of class $C^2$ in case $p<2$) and its G\^{a}teaux derivative is given by the formula
\begin{equation*}
J'(u)[v]=\int_\Omega|\nabla u|^{p-2}\nabla u\nabla v\,dx -
\int_\Omega f(x,u)v\,dx\quad \forall u,\, v\in W_0^{1,p}(\Omega),
\end{equation*}
hence the critical points of $J$ are in fact weak solutions to problem \eqref{eq:main_equation}. Then it is only natural to focus on the properties of $J$. A fundamental tool in proving these properties is represented by the inequalities listed in the three lemmas below.
\begin{lemma}\label{lemma:inequalities_for_vectors}
For every $\xi,\eta\in\RR^N$ it holds
\begin{gather}
(|\xi|^{p-2}\xi-|\eta|^{p-2}\eta)\cdot(\xi-\eta)\geq (p-1)|\xi-\eta|^2(|\xi|+|\eta|)^{p-2} \quad\text{ if }\quad 1<p\leq 2 \nonumber
\\
(|\xi|^{p-2}\xi-|\eta|^{p-2}\eta)\cdot(\xi-\eta)\geq 2^{2-p} |\xi-\eta|^{p} \quad\text{ if }\quad p\geq 2 \label{eq:simon2} \\
||\xi|^{p-2}\xi-|\eta|^{p-2}\eta| \leq c_p |\xi-\eta|^{p-1} \quad\text{ if }\quad 1<p\leq 2 \nonumber \\
||\xi|^{p-2}\xi-|\eta|^{p-2}\eta| \leq (p-1) (|\xi|+|\eta|)^{p-2}|\xi-\eta| \quad\text{ if }\quad p\geq 2 \nonumber
\end{gather}
for some constant $c_p>0$.
\end{lemma}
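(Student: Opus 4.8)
The plan is to reduce all four estimates to the analysis of the map $g(\xi)=|\xi|^{p-2}\xi$, which is the gradient of the convex potential $\Phi(\xi)=\frac{1}{p}|\xi|^p$. For $\xi,\eta\in\RR^N$ I would write, along the segment $z(t)=\eta+t(\xi-\eta)$ and with $h=\xi-\eta$,
\[
g(\xi)-g(\eta)=\int_0^1 Dg(z(t))\,h\,dt,\qquad Dg(z)=|z|^{p-2}\left(I+(p-2)\frac{z\otimes z}{|z|^2}\right).
\]
The matrix $Dg(z)$ is symmetric positive definite with eigenvalues $(p-1)|z|^{p-2}$ (eigenvector parallel to $z$) and $|z|^{p-2}$ (eigenvectors orthogonal to $z$). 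When the segment meets the origin (only possible for $1<p<2$, where $g$ is merely continuous) the integrand stays in $L^1(0,1)$ because $p-2>-1$, so the representation survives as an improper integral by approximation. Everything is trivial for $p=2$, so I treat $p\neq2$ below.

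The two ``two-sided'' inequalities are then immediate. For $1<p\le2$ the smallest eigenvalue is $(p-1)|z|^{p-2}$, so $Dg(z(t))h\cdot h\ge(p-1)|z(t)|^{p-2}|h|^2$; since $|z(t)|\le|\xi|+|\eta|$ and $s\mapsto s^{p-2}$ is nonincreasing, one has $|z(t)|^{p-2}\ge(|\xi|+|\eta|)^{p-2}$, and integrating yields the first inequality. For $p\ge2$ the operator norm of $Dg(z)$ equals its largest eigenvalue $(p-1)|z|^{p-2}$, whence $|g(\xi)-g(\eta)|\le(p-1)|h|\int_0^1|z(t)|^{p-2}\,dt$; now $p-2\ge0$ gives $|z(t)|^{p-2}\le(|\xi|+|\eta|)^{p-2}$ and the fourth inequality follows.

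For the third inequality ($1<p\le2$) the operator norm of $Dg(z)$ is $|z|^{p-2}$, so $|g(\xi)-g(\eta)|\le|h|\int_0^1|z(t)|^{p-2}\,dt$, and it remains to prove the scaling estimate $\int_0^1|z(t)|^{p-2}\,dt\le c_p|h|^{p-2}$. I would establish this by rectifying the segment: writing $|z(t)|^2=d^2+|h|^2(t-t_\ast)^2$ with $d$ the distance from the origin to the supporting line and $t_\ast$ the foot of the perpendicular, the substitution $s=|h|(t-t_\ast)$ turns the integral into one over a window of width $|h|$, and since $p-2<0$ one bounds the integrand by $|s|^{p-2}$ and maximizes over the position of the window, the worst case being the window centered at $s=0$. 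This produces $c_p=2^{2-p}/(p-1)$ and hence $|g(\xi)-g(\eta)|\le c_p|h|^{p-1}$.

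The genuinely delicate point is the second inequality, where the crude eigenvalue bound $Dg(z(t))h\cdot h\ge|z(t)|^{p-2}|h|^2$ gives only $\bigl(\int_0^1|z(t)|^{p-2}\,dt\bigr)|h|^2$, which can be as small as $\frac{2^{2-p}}{p-1}|h|^{p}$, i.e.\ strictly below the claimed $2^{2-p}|h|^p$ when $p>2$; one cannot simply discard the rank-one term $(p-2)|z|^{p-4}(z\cdot h)^2$. To recover the sharp constant I would instead exploit the uniform convexity of $\Phi$ encoded in the pointwise Clarkson inequality $|\frac{\xi-\eta}{2}|^p+|\frac{\xi+\eta}{2}|^p\le\frac{|\xi|^p+|\eta|^p}{2}$ (valid for $p\ge2$), from which the strong monotonicity of $\nabla\Phi=g$ with constant $2^{2-p}$ follows; equivalently, after normalizing $|\xi-\eta|=1$ and using rotational invariance to work in the plane spanned by $\xi$ and $\eta$, one checks that the quotient $(g(\xi)-g(\eta))\cdot(\xi-\eta)/|\xi-\eta|^p$ is minimized at the configuration $\eta=-\xi$, where it equals exactly $2^{2-p}$. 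Identifying this extremal configuration and controlling the two-parameter minimization (the dependence on $d$ is not monotone for $2<p<3$) is the step I expect to require the most care.
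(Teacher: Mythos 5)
The paper itself does not prove this lemma (it cites Glowinski--Marrocco, Simon, Damascelli and Lindqvist's notes), so a self-contained argument is welcome. Your treatment of the first, third and fourth inequalities is correct and complete: the representation $g(\xi)-g(\eta)=\int_0^1 Dg(z(t))h\,dt$, the eigenvalues $(p-1)|z|^{p-2}$ and $|z|^{p-2}$ of the symmetric matrix $Dg(z)$, the monotonicity of $s\mapsto s^{p-2}$, and, for the third inequality, the rectification $|z(t)|^{p-2}\le|h|^{p-2}|t-t_*|^{p-2}$ together with the observation that the window centred at the origin is extremal, all go through; the integrability check $p-2>-1$ when the segment crosses the origin is the right way to handle the degenerate case.

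The gap is in the second inequality \eqref{eq:simon2}, and you have correctly located it but not closed it. Your segment method yields only the constant $\tfrac{2^{2-p}}{p-1}$, and the Clarkson route does not deliver $2^{2-p}$ either: the standard passage from the pointwise inequality $\bigl|\tfrac{\xi+\eta}{2}\bigr|^p+\bigl|\tfrac{\xi-\eta}{2}\bigr|^p\le\tfrac12(|\xi|^p+|\eta|^p)$ to strong monotonicity of $\nabla\Phi$ (write the two subgradient inequalities at the midpoint and add) produces $\tfrac{2^{2-p}}{p}|\xi-\eta|^p$, again strictly below the claim for $p>2$; and the ``two-parameter minimization'' you defer is precisely the content of the inequality, so as written the proof of \eqref{eq:simon2} is incomplete. (For this paper the constant is immaterial --- the lemma is only ever used through \eqref{eq:inequality_appearing_several_times} with an unspecified $C$ --- so your weaker constant would suffice for every application, but it does not prove the statement as stated.) A clean way to finish: verify the algebraic identity
\begin{equation*}
2\bigl(|\xi|^{p-2}\xi-|\eta|^{p-2}\eta\bigr)\cdot(\xi-\eta)=\bigl(|\xi|^{p-2}+|\eta|^{p-2}\bigr)|\xi-\eta|^2+\bigl(|\xi|^{p-2}-|\eta|^{p-2}\bigr)\bigl(|\xi|^2-|\eta|^2\bigr),
\end{equation*}
whose right-hand side has the form $Ar^2+C$ with $A,C\ge0$ and $r=|\xi-\eta|$; since $r\mapsto(Ar^2+C)/r^p$ is nonincreasing for $p\ge2$ and $r\le|\xi|+|\eta|$, it suffices to check $r=|\xi|+|\eta|$, where the right-hand side equals $2(|\xi|+|\eta|)(|\xi|^{p-1}+|\eta|^{p-1})$ and the bound $\ge 2^{3-p}(|\xi|+|\eta|)^p$ reduces to the convexity of $x\mapsto x^{p-1}$.
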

Note that the first two inequalities from the above lemma appeared for the first time in \cite{GlowinskiMarrocco1975} for the case $N=2$ and then in \cite{Simon1978} for any dimension, while the remaining ones can be found in the more recent \cite{Damascelli1998} (see also \cite[Section 10]{lindqvist2006notes} for the proofs). By combining such relations with suitable H\"older inequalities it is possible to prove the following.
\begin{lemma}(\cite[Lemma 2.1]{Simon1978})\label{lemma:inequalities_for_integrals_appendix_1}
Let $u,v\in L^p(\Omega)$ for some $1<p\leq 2$. Then
\[
\displaystyle{ \int_\Omega(|u|^{p-2}u-|v|^{p-2}v)(u-v)\,dx\geq (p-1)\||u|+|v|\|_{L^p(\Omega)}^{p-2} \|u-v\|_{L^p(\Omega)}^2 }.
\]
\end{lemma}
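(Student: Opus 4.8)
The plan is to reduce the statement to the first pointwise inequality of Lemma \ref{lemma:inequalities_for_vectors} combined with a single application of H\"older's inequality, used in the reverse direction. Since $u(x),v(x)\in\RR$ are scalars, applying that inequality with $\xi=u(x)$, $\eta=v(x)$ and integrating over $\Omega$ gives at once
\[
\int_\Omega(|u|^{p-2}u-|v|^{p-2}v)(u-v)\,dx\geq (p-1)\int_\Omega|u-v|^2(|u|+|v|)^{p-2}\,dx,
\]
so the whole content of the lemma is to bound the right-hand side from below by $(p-1)\||u|+|v|\|_{L^p(\Omega)}^{p-2}\|u-v\|_{L^p(\Omega)}^2$. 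I would dispose of the case $p=2$ first, where the factor $(|u|+|v|)^{p-2}$ equals $1$ and the estimate is an identity, and then assume $1<p<2$, so that $2-p>0$.

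For $1<p<2$ the key step is to write, on the set where $|u|+|v|>0$,
\[
|u-v|^p=\bigl[\,|u-v|^2(|u|+|v|)^{p-2}\,\bigr]^{p/2}\cdot(|u|+|v|)^{p(2-p)/2},
\]
the powers of $(|u|+|v|)$ cancelling exactly. I would then apply H\"older's inequality with the conjugate exponents $s=2/p$ and $s'=2/(2-p)$, both of which are $\geq1$ precisely because $1<p<2$, to these two factors. Writing $A$ and $B$ for the two factors, one has $A^s=|u-v|^2(|u|+|v|)^{p-2}$ and $B^{s'}=(|u|+|v|)^p$, so the inequality reads
\[
\int_\Omega|u-v|^p\,dx\leq\left(\int_\Omega|u-v|^2(|u|+|v|)^{p-2}\,dx\right)^{p/2}\left(\int_\Omega(|u|+|v|)^p\,dx\right)^{(2-p)/2}.
\]
Rearranging and raising to the power $2/p$ then produces
\[
\int_\Omega|u-v|^2(|u|+|v|)^{p-2}\,dx\geq \|u-v\|_{L^p(\Omega)}^2\,\||u|+|v|\|_{L^p(\Omega)}^{-(2-p)},
\]
which is the desired lower bound, since $-(2-p)=p-2$. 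Chaining this with the integrated pointwise inequality completes the proof.

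The only delicate point I expect is the set $Z=\{x:u(x)=v(x)=0\}$, on which $(|u|+|v|)^{p-2}$ is not defined because $p-2<0$. I would handle it by noting that on $Z$ one also has $u-v=0$, so that both integrands $(|u|^{p-2}u-|v|^{p-2}v)(u-v)$ and $|u-v|^2(|u|+|v|)^{p-2}$ vanish there under the natural convention $0\cdot 0^{p-2}=0$; hence every integral may be restricted to $\Omega\setminus Z$ without altering its value, and on $\Omega\setminus Z$ the factorization above is legitimate. One should also record, using the third inequality of Lemma \ref{lemma:inequalities_for_vectors}, that $|u|^{p-2}u-|v|^{p-2}v\in L^{p'}(\Omega)$ whenever $u,v\in L^p(\Omega)$, so the left-hand integral is finite and all the manipulations are justified.
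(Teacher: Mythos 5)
Your argument is correct and is exactly the route the paper indicates: it combines the first pointwise inequality of Lemma \ref{lemma:inequalities_for_vectors} with H\"older's inequality for the conjugate pair $2/p$, $2/(2-p)$, which is precisely how the paper (following \cite{Simon1978}) says the lemma is obtained. The treatment of the degenerate set $\{u=v=0\}$ and the integrability remark are appropriate, and no further detail is needed.
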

\begin{lemma}(\cite[Lemma 3.8]{BartschLiu2004})\label{lemma:inequalities_for_integrals_appendix_2}
Let $u,v\in L^p(\Omega)$ for some $p\geq 2$. Then
\[
\||u|^{p-2}u-|v|^{p-2}v\|_{L^{p'}(\Omega)} \leq (p-1) \||u|+|v|\|_{L^p(\Omega)}^{p-2} \|u-v\|_{L^p(\Omega)}.
\]
\end{lemma}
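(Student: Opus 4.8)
The plan is to prove Lemma \ref{lemma:inequalities_for_integrals_appendix_2} by reducing the $L^{p'}$ estimate on the nonlinear map to the pointwise vector inequality stated in Lemma \ref{lemma:inequalities_for_vectors}, followed by a single application of Hölder's inequality with a carefully chosen pair of conjugate exponents. Since we are working with scalar functions $u,v\in L^p(\Omega)$, I would apply the last inequality of Lemma \ref{lemma:inequalities_for_vectors} with $N=1$, i.e.\ with $\xi=u(x)$ and $\eta=v(x)$ viewed as real numbers, to obtain the pointwise bound
\begin{equation}\nonumber
\bigl| |u(x)|^{p-2}u(x)-|v(x)|^{p-2}v(x) \bigr| \leq (p-1)\bigl(|u(x)|+|v(x)|\bigr)^{p-2}|u(x)-v(x)|
\end{equation}
valid for a.e.\ $x\in\Omega$ when $p\geq2$. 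Raising to the power $p'=p/(p-1)$ and integrating over $\Omega$ then reduces the whole statement to controlling the integral of $\bigl[(|u|+|v|)^{p-2}|u-v|\bigr]^{p'}$.

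The main step is the choice of Hölder exponents. Writing $g=(|u|+|v|)^{p-2}$ and $h=|u-v|$, I must estimate $\int_\Omega g^{p'}h^{p'}\,dx$ and recover an expression matching the right-hand side, namely $(p-1)^{p'}\||u|+|v|\|_{L^p}^{(p-2)p'}\|u-v\|_{L^p}^{p'}$. The natural choice is to apply Hölder so that the factor $h^{p'}=|u-v|^{p'}$ lands in $L^{p/p'}=L^{p-1}$, forcing the conjugate exponent on $g^{p'}$. Concretely, I would pair the exponents $p/p'=p-1$ (acting on $|u-v|^{p'}$, giving $\|u-v\|_{L^p}^{p'}$) with its conjugate $(p-1)/(p-2)$ acting on $(|u|+|v|)^{(p-2)p'}$; one checks that $(p-2)p'\cdot\frac{p-1}{p-2}=p'(p-1)=p$, so this factor is exactly $\bigl(\int_\Omega(|u|+|v|)^p\bigr)^{(p-2)/(p-1)}=\||u|+|v|\|_{L^p}^{(p-2)p'}$. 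Verifying that these two exponents are indeed conjugate, $\frac{1}{p-1}+\frac{p-2}{p-1}=1$, completes the arithmetic and yields precisely the claimed inequality after taking the $p'$-th root.

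I would handle the degenerate boundary cases explicitly: when $p=2$ the inequality is the trivial identity $\|u-v\|_{L^2}\leq\|u-v\|_{L^2}$ (the exponent $p-2=0$ makes the middle factor equal to $1$), and the general argument specializes correctly. I should also note that the integrability of all quantities is guaranteed by the hypothesis $u,v\in L^p(\Omega)$ together with the exponent bookkeeping above, so no measurability or finiteness issues arise.

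The hard part, such as it is, lies entirely in identifying the correct Hölder pairing; everything else is bookkeeping. The risk is a miscalculation in matching powers, so I would double-check that the product of the two Hölder pieces reconstitutes the exponents $(p-2)p'$ on $\||u|+|v|\|_{L^p}$ and $p'$ on $\|u-v\|_{L^p}$ before concluding. Once the exponent arithmetic is confirmed, the proof is immediate and requires no further input beyond Lemma \ref{lemma:inequalities_for_vectors}.
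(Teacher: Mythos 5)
Your proof is correct and follows exactly the route the paper indicates: it does not prove the lemma itself but cites \cite[Lemma 3.8]{BartschLiu2004} and remarks that it follows ``by combining such relations with suitable H\"older inequalities,'' i.e.\ the scalar case of the last inequality in Lemma \ref{lemma:inequalities_for_vectors} plus one H\"older application, which is precisely your argument with the conjugate pair $p-1$ and $(p-1)/(p-2)$. Your exponent bookkeeping checks out (indeed $(p-2)p'\cdot\frac{p-1}{p-2}=p$ and $p'(p-1)=p$), and your separate treatment of the degenerate case $p=2$ is appropriate.
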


As a consequence of Lemma \ref{lemma:inequalities_for_integrals_appendix_1} and of \eqref{eq:simon2}, there exists $C>0$
such that the following holds for every $u,v\in W_0^{1,p}(\Omega)$
\begin{equation}\label{eq:inequality_appearing_several_times}
\int_\Omega(|\nabla u|^{p-2}\nabla u-|\nabla v|^{p-2}\nabla v)\cdot\nabla(u-v)\, dx \geq
\left\{\begin{array}{ll}
C(\|u\|+\| v \|)^{p-2}\|u-v\|^2  & \text{ if } p\leq 2\\
C \|u-v\|^p  & \text{ if }p\geq 2.
\end{array}\right.
\end{equation}
This relation plays a key role in the proofs of the subsequent results, such as the well known compactness property below (see for example \cite[Appendix A]{Peral1997}).
\begin{lemma}\label{lemma:palais_smale_if_u_n_bounded}
Let $(u_n)_n\subset W_0^{1,p}(\Omega)$ be a bounded sequence such that\\ $\|J'(u_n)\|_{W^{-1,p'}(\Omega)}  \to 0$. Then there exists $u\in W_0^{1,p}(\Omega)$ such that $u_n\to u$ in $W_0^{1,p}(\Omega)$ and $J'(u)=0$.
\end{lemma}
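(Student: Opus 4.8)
The goal is to prove Lemma \ref{lemma:palais_smale_if_u_n_bounded}: any bounded Palais--Smale sequence for $J$ admits a strongly convergent subsequence whose limit is a critical point. The plan is to exploit the monotonicity estimate \eqref{eq:inequality_appearing_several_times} as the central compactness mechanism, turning weak convergence into strong convergence.

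\emph{First}, since $(u_n)_n$ is bounded in the reflexive Banach space $W_0^{1,p}(\Omega)$, I would extract a subsequence (not relabeled) such that $u_n\weak u$ weakly in $W_0^{1,p}(\Omega)$ for some $u\in W_0^{1,p}(\Omega)$. By the compact Sobolev embedding $W_0^{1,p}(\Omega)\hookrightarrow\hookrightarrow L^q(\Omega)$ (which holds since $q<p^\star$ by $(f1)$), we then have $u_n\to u$ strongly in $L^q(\Omega)$ and pointwise a.e.\ along a further subsequence.

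\emph{Second}, I would test the difference of derivatives against $u_n-u$. Write
\begin{equation*}
\int_\Omega\bigl(|\nabla u_n|^{p-2}\nabla u_n-|\nabla u|^{p-2}\nabla u\bigr)\cdot\nabla(u_n-u)\,dx = J'(u_n)[u_n-u]-J'(u)[u_n-u]+\int_\Omega\bigl(f(x,u_n)-f(x,u)\bigr)(u_n-u)\,dx.
\end{equation*}
Now I would argue that every term on the right tends to zero. The term $J'(u_n)[u_n-u]$ is controlled by $\|J'(u_n)\|_{W^{-1,p'}}\,\|u_n-u\|$, which vanishes since the norms $\|J'(u_n)\|$ tend to zero and $\|u_n-u\|$ stays bounded. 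The term $J'(u)[u_n-u]$ vanishes by the very definition of weak convergence $u_n\weak u$, because $J'(u)\in W^{-1,p'}(\Omega)$ is a fixed bounded functional. The remaining integral involving $f$ is handled via the growth bound $(f1)$: using the strong $L^q$ convergence of $u_n$ together with the fact that $f(x,u_n)$ is bounded in $L^{q'}(\Omega)$, a H\"older estimate (or the continuity of the Nemytskii operator $u\mapsto f(\cdot,u)$ from $L^q$ to $L^{q'}$) shows this term tends to zero as well.

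\emph{Third}, having shown that the left-hand side tends to zero, I would invoke inequality \eqref{eq:inequality_appearing_several_times}. For $p\geq 2$ this immediately gives $C\|u_n-u\|^p\to 0$, hence $u_n\to u$ strongly. For $1<p\leq 2$ the inequality reads $C(\|u_n\|+\|u\|)^{p-2}\|u_n-u\|^2\le o(1)$; since $(\|u_n\|)_n$ is bounded, the prefactor $(\|u_n\|+\|u\|)^{p-2}$ is bounded below away from zero, so again $\|u_n-u\|^2\to 0$ and the convergence is strong. \emph{Finally}, strong convergence $u_n\to u$ in $W_0^{1,p}(\Omega)$ combined with the continuity of $J'\colon W_0^{1,p}(\Omega)\to W^{-1,p'}(\Omega)$ yields $J'(u_n)\to J'(u)$, while the hypothesis forces $J'(u_n)\to 0$; therefore $J'(u)=0$. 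The step I expect to require the most care is the second one: correctly verifying that the nonlinear term $\int_\Omega(f(x,u_n)-f(x,u))(u_n-u)\,dx$ vanishes, since this relies on matching the subcritical growth exponent $q<p^\star$ from $(f1)$ with the compact embedding, and in the degenerate case $p\le 2$ one must additionally be sure the coefficient $(\|u_n\|+\|u\|)^{p-2}$ does not degenerate.
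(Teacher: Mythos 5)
Your proof is correct and follows exactly the standard argument that the paper itself does not reproduce but delegates to the literature (it cites \cite[Appendix A]{Peral1997}): weak convergence from reflexivity, compact Sobolev embedding to kill the nonlinear term via $(f1)$, and the Simon-type monotonicity inequality \eqref{eq:inequality_appearing_several_times} to upgrade to strong convergence, with the correct observation that for $p\leq 2$ the exponent $p-2\leq 0$ makes the prefactor $(\|u_n\|+\|u\|)^{p-2}$ bounded \emph{below} on bounded sequences. The only cosmetic remark is that, as is customary for Palais--Smale statements, the conclusion holds for a subsequence (which is how you argue and how the paper uses the lemma), even though the statement is phrased for the full sequence.
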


As for the fixed point framework, let us recall some fundamental properties of the operator $K(u)$ introduced in \eqref{eq:definition_of_K}.
We can examine the mutual relations between $J(u)$ and $K(u)$. Despite the fact that $J'(u)$ does not admit in general a representative in the space $W_0^{1,p}(\Omega)$, as is the case when $p=2$, some useful relations can be proved. More exactly, due to \cite[relation (2.2)]{Simon1978}, we have the following lemma.
\begin{lemma}\label{lemma:K_pseudogradient_first_estimate}
There exists $C>0$ such that for every $u\in W_0^{1,p}(\Omega)$ the following holds
\[
J'(u)[u-K(u)]\geq \left\{\begin{array}{ll}
              C\|u-K(u)\|^2 (\|u\|+\|K(u)\|)^{p-2} \quad &\text{ if } 1<p\leq 2 \\
              C\|u-K(u)\|^p  \quad &\text{ if } p\geq 2.
                         \end{array}\right.
\]
\end{lemma}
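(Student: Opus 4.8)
The plan is to exploit the variational characterization of $K$ in order to turn $J'(u)[u-K(u)]$ into a manifestly nonnegative quantity that is controlled from below by the monotonicity estimate \eqref{eq:inequality_appearing_several_times}. Write $v:=K(u)\in W_0^{1,p}(\Omega)$ and $w:=u-v$. Since $K$ maps $W_0^{1,p}(\Omega)$ into itself, $w$ is an admissible test function in the weak formulation of \eqref{eq:definition_of_K}, namely
\[
\int_\Omega |\nabla v|^{p-2}\nabla v \cdot \nabla w\,dx + M\int_\Omega |v|^{p-2}v\, w\,dx = \int_\Omega \big(f(x,u) + M|u|^{p-2}u\big)\,w\,dx.
\]
First I would use this identity to eliminate the term $\int_\Omega f(x,u)\,w\,dx$ appearing in $J'(u)[w]=\int_\Omega |\nabla u|^{p-2}\nabla u\cdot\nabla w\,dx-\int_\Omega f(x,u)\,w\,dx$.

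Substituting the value of $\int_\Omega f(x,u)\,w\,dx$ and recalling $w=u-v$, the $f$-contributions cancel and the terms carrying the factor $M$ recombine, so that the whole expression telescopes into the sum of two monotonicity brackets,
\[
J'(u)[u-K(u)] = \int_\Omega \big(|\nabla u|^{p-2}\nabla u - |\nabla v|^{p-2}\nabla v\big)\cdot\nabla(u-v)\,dx + M\int_\Omega \big(|u|^{p-2}u - |v|^{p-2}v\big)(u-v)\,dx.
\]
Both integrands are pointwise nonnegative: the second because $t\mapsto |t|^{p-2}t$ is nondecreasing on $\RR$, being the derivative of the convex function $t\mapsto |t|^p/p$, and because $M>0$. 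I would therefore simply discard the zeroth-order term, which only improves the lower bound.

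It then remains to estimate the gradient term from below, which is exactly the content of \eqref{eq:inequality_appearing_several_times} applied to the pair $(u,v)=(u,K(u))$: for $p\geq 2$ it gives $C\|u-K(u)\|^p$, while for $1<p\leq 2$ it gives $C(\|u\|+\|K(u)\|)^{p-2}\|u-K(u)\|^2$, which are precisely the two branches of the claim. There is essentially no analytic obstacle in this argument; the only points deserving care are the bookkeeping in the telescoping identity and the admissibility of the test function $w=u-K(u)$, both of which are routine. The single nontrivial ingredient is the monotonicity estimate \eqref{eq:inequality_appearing_several_times}, which rests on the vector inequalities of Lemma \ref{lemma:inequalities_for_vectors} and is taken for granted here.
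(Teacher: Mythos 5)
Your proof is correct and is essentially the argument the paper relies on: the paper proves this lemma by citing Simon's relation (2.2), which is precisely the monotonicity estimate \eqref{eq:inequality_appearing_several_times} that you invoke after the standard telescoping obtained by testing the defining equation of $K(u)$ with $u-K(u)$ and discarding the nonnegative zeroth-order term.
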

In addition, due to \cite[Lemma 3.8]{BartschLiu2004}, we are able to give more estimates.
\begin{lemma}\label{lemma:K_pseudogradient_second_estimate}
There exists $C>0$ such that for every $u\in W_0^{1,p}(\Omega)$ the following holds
\[
\|J'(u)\|_{W^{-1,p'}(\Omega)} \leq \left\{\begin{array}{ll}
              C\|u-K(u)\|^{p-1} \quad &\text{ if } 1<p\leq 2 \\
              C\|u-K(u)\| (\|u\|+\|K(u)\|)^{p-2} \quad &\text{ if } p\geq 2.
                         \end{array}\right.
\]
\end{lemma}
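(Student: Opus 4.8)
The plan is to rewrite $J'(u)$ in terms of the difference between $u$ and $K(u)$, and then to estimate the resulting quantity by means of the pointwise and integral inequalities collected in Lemmas \ref{lemma:inequalities_for_vectors}--\ref{lemma:inequalities_for_integrals_appendix_2}. First I would test the equation defining $v:=K(u)$ against an arbitrary $\phi\in W_0^{1,p}(\Omega)$, obtaining
\[
\int_\Omega|\nabla v|^{p-2}\nabla v\cdot\nabla\phi\,dx+M\int_\Omega|v|^{p-2}v\,\phi\,dx=\int_\Omega f(x,u)\phi\,dx+M\int_\Omega|u|^{p-2}u\,\phi\,dx,
\]
and substitute the expression for $\int_\Omega f(x,u)\phi\,dx$ into the formula for $J'(u)[\phi]$. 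This yields the identity
\[
J'(u)[\phi]=\int_\Omega\bigl(|\nabla u|^{p-2}\nabla u-|\nabla v|^{p-2}\nabla v\bigr)\cdot\nabla\phi\,dx+M\int_\Omega\bigl(|u|^{p-2}u-|v|^{p-2}v\bigr)\phi\,dx,
\]
which is the crucial reduction: the dual norm of $J'(u)$ is now governed entirely by the two quantities $|\nabla u|^{p-2}\nabla u-|\nabla v|^{p-2}\nabla v$ and $|u|^{p-2}u-|v|^{p-2}v$, measured in $L^{p'}(\Omega)$.

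Next I would bound each of the two terms by H\"older's inequality, separating off the factor $\|\nabla\phi\|_{L^p}=\|\phi\|$ (and, for the zero-order term, $\|\phi\|_{L^p}\le C\|\phi\|$ by Poincar\'e). For the first integral I apply the relevant pointwise estimate of Lemma \ref{lemma:inequalities_for_vectors} with $\xi=\nabla u$, $\eta=\nabla v$: when $1<p\le2$ the bound $||\xi|^{p-2}\xi-|\eta|^{p-2}\eta|\le c_p|\xi-\eta|^{p-1}$ gives, after raising to the power $p'$ and using $(p-1)p'=p$, that $\||\nabla u|^{p-2}\nabla u-|\nabla v|^{p-2}\nabla v\|_{L^{p'}}\le c_p\|u-v\|^{p-1}$; when $p\ge2$ the bound $||\xi|^{p-2}\xi-|\eta|^{p-2}\eta|\le(p-1)(|\xi|+|\eta|)^{p-2}|\xi-\eta|$ is integrated and an additional H\"older splitting with conjugate exponents $p-1$ and $(p-1)/(p-2)$ produces $\||\nabla u|^{p-2}\nabla u-|\nabla v|^{p-2}\nabla v\|_{L^{p'}}\le(p-1)\,\||\nabla u|+|\nabla v|\|_{L^p}^{p-2}\,\|u-v\|$. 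In both cases $\|u-v\|=\|u-K(u)\|$ and $\||\nabla u|+|\nabla v|\|_{L^p}\le\|u\|+\|K(u)\|$ by the triangle inequality.

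The zero-order term is handled analogously on the scalar functions $u,v\in L^p(\Omega)$: for $1<p\le2$ I again use the pointwise inequality of Lemma \ref{lemma:inequalities_for_vectors}, integrate, and control $\|u-v\|_{L^p}$ by $C\|u-K(u)\|$ through Poincar\'e; for $p\ge2$ I invoke directly Lemma \ref{lemma:inequalities_for_integrals_appendix_2}, again passing from $L^p$ norms to the gradient norm via Poincar\'e and estimating $\||u|+|v|\|_{L^p}\le C(\|u\|+\|K(u)\|)$. Summing the two contributions, dividing by $\|\phi\|$ and taking the supremum over $\phi\ne0$ then gives exactly the claimed bounds.

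The computation is essentially mechanical once the identity for $J'(u)[\phi]$ is in place, so I do not expect a serious obstacle; the only delicate point is the bookkeeping of the H\"older exponents in the case $p\ge2$, where one must verify that the splitting with exponents $p-1$ and $(p-1)/(p-2)$ reproduces precisely the factor $(\|u\|+\|K(u)\|)^{p-2}$ together with a single power of $\|u-K(u)\|$, rather than some other combination. A minor secondary point is to check that the $L^p$-to-$W^{1,p}_0$ passage via Poincar\'e does not alter the exponents on the zero-order term, but since it contributes only a multiplicative constant it is harmlessly absorbed into $C$.
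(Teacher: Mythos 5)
Your proof is correct: the identity $J'(u)[\phi]=\int_\Omega(|\nabla u|^{p-2}\nabla u-|\nabla K(u)|^{p-2}\nabla K(u))\cdot\nabla\phi\,dx+M\int_\Omega(|u|^{p-2}u-|K(u)|^{p-2}K(u))\phi\,dx$ obtained by testing the equation defining $K(u)$ is exactly the right reduction, and your H\"older bookkeeping (using $(p-1)p'=p$ for $p\le2$, and the splitting with exponents $p-1$ and $(p-1)/(p-2)$ for $p\ge2$, together with Poincar\'e on the zero-order term) is accurate. The paper does not prove this lemma itself but quotes it from Bartsch--Liu \cite[Lemma 3.8]{BartschLiu2004}; your argument is the standard proof of that cited result, so there is nothing to compare beyond noting that you have correctly filled in the citation.
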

\begin{remark}\label{rem:bounded norm}
It is worth keeping in mind that, by assumption $(f1)$, $\|K(u)\|$ is bounded whenever $\|u\|$ is.
\end{remark}
Some attention should be paid at this point because the operator $u-K(u)$ is not Lipschitz for $p\neq 2$, so that it can not be used as a generalized pseudogradient vector field for $J'(u)$. To overcome this obstacle, we rely on \cite[Lemma 4.1]{BartschLiu2004} and \cite[Lemma 2.1]{BartschLiuWeth2005} which allow us to formulate the next proposition.
\begin{proposition}\label{prop:lipschitz_pseudogradient}
Let $\alpha$ be a strict subsolution for \eqref{eq:main_equation} such that $\mathcal{C}_\alpha$ is strictly (respectively locally) $K$-invariant. Then there exists a locally Lipschitz continuous operator $\tilde K: W_0^{1,p}(\Omega)\setminus\{u: u=K(u)\}\to W_0^{1,p}(\Omega)$ satisfying the inequalities from Lemmas \ref{lemma:K_pseudogradient_first_estimate} and \ref{lemma:K_pseudogradient_second_estimate} such that $\mathcal{C}_\alpha$ is strictly (respectively locally) $\tilde K$-invariant. An analogous result holds for a strict supersolution $\beta$.
\end{proposition}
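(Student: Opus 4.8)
The plan is to regularize the non-Lipschitz vector field $u \mapsto u - K(u)$ by a standard locally-finite-partition-of-unity construction, producing a locally Lipschitz map $\tilde K$ whose values stay close enough to those of $K$ that all the relevant inequalities and the cone invariance are preserved. First I would fix the open set $D := W_0^{1,p}(\Omega) \setminus \{u : u = K(u)\}$ on which we work, and observe that on $D$ we have $u \neq K(u)$, so the quantity $\|u - K(u)\|$ is bounded away from zero on any sufficiently small neighborhood of a given point. The key analytic input is that $K$ is continuous (indeed compact, by $(f1)$ and the compact Sobolev embedding), so for each $u_0 \in D$ and each $\delta > 0$ there is an open ball $B(u_0, r_{u_0})$ on which $\|K(u) - K(u_0)\| < \delta$; thus the constant value $K(u_0)$ is a good local approximation of $K$ on that ball.

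Next I would carry out the gluing. Choose a locally finite open cover $\{B_i\}$ of $D$ refining the balls above, with a subordinate locally Lipschitz partition of unity $\{\rho_i\}$ (in a metric space such partitions exist and can be taken locally Lipschitz). On each $B_i$ let $K_i := K(u_i)$ be the frozen value at the center $u_i$, and set
\[
\tilde K(u) := \sum_i \rho_i(u)\, K_i.
\]
This $\tilde K$ is locally Lipschitz on $D$ because it is a locally finite sum of locally Lipschitz scalar functions times constant vectors. By choosing the approximation parameter $\delta$ on each ball small enough, one forces $\|\tilde K(u) - K(u)\| < \eta(u)$ for any preassigned continuous tolerance $\eta$; this is exactly the mechanism of \cite[Lemma 4.1]{BartschLiu2004} and \cite[Lemma 2.1]{BartschLiuWeth2005}. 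The point is that the two families of inequalities — Lemma \ref{lemma:K_pseudogradient_first_estimate} (the lower bound on $J'(u)[u - K(u)]$) and Lemma \ref{lemma:K_pseudogradient_second_estimate} (the upper bound on $\|J'(u)\|_{W^{-1,p'}}$) — are both \emph{strict} inequalities with a definite gap, so they are stable under a sufficiently small perturbation of the vector field $u - K(u)$ into $u - \tilde K(u)$. Concretely, on a neighborhood of any point one bounds the change in each side using $\|\tilde K(u) - K(u)\|$ together with Remark \ref{rem:bounded norm} (which keeps $\|K(u)\|$, and hence $\|\tilde K(u)\|$, bounded on bounded sets), and then absorbs the error by shrinking $\delta$.

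The main obstacle, and the step requiring the most care, is preserving the \emph{cone invariance} while regularizing. The invariance hypothesis gives $K(\mathcal{C}_{\alpha,\ep}) \subseteq \mathcal{C}_{\alpha,\ep/2}$ (strict case) or the same with the bounded-set restriction (local case), so each frozen value $K_i = K(u_i)$ with $u_i \in \mathcal{C}_{\alpha,\ep}$ lies in $\mathcal{C}_{\alpha,\ep/2}$. Since $\mathcal{C}_{\alpha,\ep/2}$ is convex (it is an $\ep/2$-neighborhood, in the norm $\|\cdot\|$, of the convex cone $\mathcal{C}_\alpha$, hence convex) and $\tilde K(u)$ is a convex combination $\sum_i \rho_i(u) K_i$ of such points, the value $\tilde K(u)$ again lies in $\mathcal{C}_{\alpha,\ep/2}$. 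The delicate bookkeeping is that for a given $u \in \mathcal{C}_{\alpha,\ep}$ the active indices $i$ (those with $\rho_i(u) > 0$) must all have centers $u_i$ that themselves lie in a slightly enlarged cone $\mathcal{C}_{\alpha,\ep'}$ with $\ep' $ only marginally larger than $\ep$; this is arranged by refining the cover so that each ball has radius small compared to the spacing between the nested cones, and by using that the family of cones $\{\mathcal{C}_{\alpha,\ep}\}_\ep$ is monotone in $\ep$. Choosing $\ep' < \ep_\alpha$ and invoking strict $K$-invariance at level $\ep'$ then yields $K_i \in \mathcal{C}_{\alpha,\ep'/2} \subseteq \mathcal{C}_{\alpha,\ep/2}$, and convexity closes the argument; the local case is identical after intersecting with the bounded set $\mathcal{U}$. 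The supersolution statement for $\beta$ follows by the symmetric construction applied to $\mathcal{C}^\beta$.
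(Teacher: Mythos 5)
Your overall strategy --- freeze the values of $K$ on the balls of a locally finite cover, glue with a locally Lipschitz partition of unity, and use convexity of the enlarged cones to propagate the invariance --- is exactly the construction behind the references the paper invokes here (\cite[Lemma 4.1]{BartschLiu2004}, \cite[Lemma 2.1]{BartschLiuWeth2005}); the paper itself gives no proof, it only cites them. The stability of the inequalities of Lemmas \ref{lemma:K_pseudogradient_first_estimate} and \ref{lemma:K_pseudogradient_second_estimate} under a perturbation with $\|\tilde K(u)-K(u)\|$ bounded by a small multiple of the continuous positive function $\|u-K(u)\|$ on the domain $W_0^{1,p}(\Omega)\setminus\{u=K(u)\}$ is handled correctly, as is the convexity of ${\mathcal C}_{\alpha,\ep/2}$ (sublevel set of the convex function $\dist(\cdot,{\mathcal C}_\alpha)$).

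There is, however, a concrete error in the invariance step: you write $K_i\in{\mathcal C}_{\alpha,\ep'/2}\subseteq{\mathcal C}_{\alpha,\ep/2}$ with $\ep'>\ep$, but the inclusion goes the other way (${\mathcal C}_{\alpha,\ep/2}\subseteq{\mathcal C}_{\alpha,\ep'/2}$), so the argument as written does not close. Quantitatively: if $u\in{\mathcal C}_{\alpha,\ep}$ and the active centers $u_i$ satisfy $\|u_i-u\|<r_i$, then $\dist(u_i,{\mathcal C}_\alpha)<\ep+r_i$, whence $K_i\in{\mathcal C}_{\alpha,(\ep+r_i)/2}$ and convexity only yields $\tilde K(u)\in{\mathcal C}_{\alpha,(\ep+\sup_i r_i)/2}$, which is strictly weaker than the ${\mathcal C}_{\alpha,\ep/2}$ demanded by Definition \ref{def:strict_K_invariance}; since $\dist(u,{\mathcal C}_\alpha)$ may be arbitrarily close to $\ep$ and $\ep$ ranges over all of $(0,\ep_\alpha)$, no a priori choice of radii removes the defect, and the halving property alone is not stable under this regularization. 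The standard repair is to use the stronger estimate that actually underlies the $K$-invariance proved in Section \ref{section: K invariance}, namely $\dist(K(v),{\mathcal C}_\alpha)=o(\dist(v,{\mathcal C}_\alpha))$: then $\dist(K(u_i),{\mathcal C}_\alpha)\le\tfrac14\dist(u_i,{\mathcal C}_\alpha)\le\tfrac14(\ep+r_i)$ for $\ep$ small, and choosing $r_i\le\ep_\alpha$ comparable to the relevant scale recovers the factor $\tfrac12$ after the convex combination. (Alternatively one observes that the constant $\tfrac12$ in the definition is immaterial for the deformation argument of Lemma \ref{lemma:deformation lemma}, but then the proposition as stated is not literally what you have proved.) You should make one of these two repairs explicit; everything else in your outline is sound.
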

%
%
To conclude this section, let us recall some known properties of $\lambda_1$ and $\lambda_2$ and of the associated eigenfunctions (see \cite{DiBenedetto1983,Lieberman1988,Lindqvist1990,Tolksdorf1984,Vazquez1984}).
\begin{proposition}\label{prop: first eigenvalue and eigenfuntion}
There is a first eigenfunction $\phi_1 \in C^1(\overline{\Omega})$ corresponding to $\lambda_1$. Moreover, it is simple and (by eventually taking its modulus) we have $\phi_1>0$ in $\Omega$ and $\partial \phi_1/\partial\nu<0$ on $\partial\Omega$.
\end{proposition}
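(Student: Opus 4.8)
The statement collects several classical facts about the first eigenvalue $\lambda_1$ of the $p$-Laplacian on the bounded regular domain $\Omega$, and my plan is to establish them in the natural order: first existence, then regularity, then positivity, then simplicity, and finally the Hopf-type boundary estimate. For existence, I would proceed by the direct method of the calculus of variations. The Rayleigh quotient defining $\lambda_1$ is bounded below by $0$, so $\lambda_1\geq 0$; I minimize $\int_\Omega|\nabla\varphi|^p\,dx$ over the constraint set $\{\varphi\in W_0^{1,p}(\Omega):\int_\Omega|\varphi|^p\,dx=1\}$. Taking a minimizing sequence $(\varphi_n)$, the constraint plus the defining quotient give a uniform bound on $\|\varphi_n\|$, so up to a subsequence $\varphi_n\weak\phi_1$ weakly in $W_0^{1,p}(\Omega)$ and, by the compact Sobolev embedding $W_0^{1,p}(\Omega)\hookrightarrow L^p(\Omega)$, strongly in $L^p(\Omega)$; the latter preserves the constraint $\int_\Omega|\phi_1|^p\,dx=1$, so $\phi_1\not\equiv 0$, while weak lower semicontinuity of the norm gives $\int_\Omega|\nabla\phi_1|^p\,dx\leq\lambda_1$. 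Hence $\phi_1$ is a minimizer and, by the Lagrange multiplier rule applied to the constrained functional, it solves $-\Delta_p\phi_1=\lambda_1|\phi_1|^{p-2}\phi_1$ weakly, i.e. $\phi_1$ is a first eigenfunction.

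Next I would address regularity and positivity. Since $|\phi_1|$ has the same Rayleigh quotient as $\phi_1$, I may replace $\phi_1$ by $|\phi_1|\geq 0$, which is again a minimizer and hence an eigenfunction; this is the step signalled by the parenthetical \emph{``by eventually taking its modulus''}. The $C^1(\overline\Omega)$ regularity then follows from the cited interior and boundary regularity theory for $p$-Laplace equations (\cite{DiBenedetto1983,Lieberman1988,Tolksdorf1984}): bootstrapping from the growth bound on the right-hand side $\lambda_1|\phi_1|^{p-2}\phi_1$, one gets first $\phi_1\in L^\infty$ and then $\phi_1\in C^{1,\sigma}(\overline\Omega)$ for some $\sigma>0$. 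With the nonnegativity and $C^1$ regularity in hand, strict positivity $\phi_1>0$ in $\Omega$ comes from the strong maximum principle of V\'azquez \cite{Vazquez1984}: writing the equation as $-\Delta_p\phi_1+c(x)|\phi_1|^{p-2}\phi_1=0$ with a nonnegative lower-order coefficient is not quite the right form, so instead I note $-\Delta_p\phi_1=\lambda_1\phi_1^{p-1}\geq 0$, so $\phi_1$ is a nonnegative $p$-superharmonic function; V\'azquez's principle then forces either $\phi_1\equiv 0$ or $\phi_1>0$ throughout $\Omega$, and the former is excluded. The Hopf boundary estimate $\partial\phi_1/\partial\nu<0$ on $\partial\Omega$ is the companion boundary statement in \cite{Vazquez1984} (for the outward normal $\nu$), again using the $C^1$ regularity up to the boundary and the regularity of $\partial\Omega$.

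It remains to prove \emph{simplicity}, which I expect to be the main obstacle, since this is the one assertion that is genuinely nonlinear in character and has no immediate analogue in the linear ($p=2$) theory. The standard route is to invoke the result of Lindqvist \cite{Lindqvist1990}, which establishes that $\lambda_1$ is simple on any bounded domain, meaning that any two first eigenfunctions are scalar multiples of one another. The conceptual heart of that argument is the hidden convexity of the functional along the curve $\left(t u_0^p+(1-t)u_1^p\right)^{1/p}$ for two positive normalized eigenfunctions $u_0,u_1$: the Picone-type identity, or equivalently the convexity of $\xi\mapsto|\xi|^p$ combined with a clever interpolation, forces the two gradients to be proportional a.e., and then the equation pins down the proportionality constant. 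Rather than reproduce this delicate computation, I would cite \cite{Lindqvist1990} directly, observing that our hypotheses on $\Omega$ (bounded and regular) are exactly those required there. Combining simplicity with the positivity already established shows that the first eigenfunction is, up to sign and normalization, unique and of one sign, completing the proof of all the assertions in the proposition.
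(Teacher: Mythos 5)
Your proposal is correct, and it matches the paper's treatment: the paper states this proposition without proof, as a recollection of classical facts with pointers to \cite{DiBenedetto1983,Lieberman1988,Lindqvist1990,Tolksdorf1984,Vazquez1984}, and your outline (direct method for existence, $L^\infty$ plus $C^{1,\sigma}(\overline\Omega)$ regularity from DiBenedetto--Tolksdorf--Lieberman, positivity and the Hopf boundary estimate from V\'azquez's strong maximum principle, simplicity from Lindqvist's hidden-convexity argument) is exactly the standard chain of arguments behind those citations. No gaps; at most one could add that $\lambda_1>0$ by the Poincar\'e inequality, but this is immaterial for the assertions being proved.
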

Concerning $\lambda_2$, we will need the following equivalent characterization.
\begin{proposition}(\cite[Corollary 3.2]{CuestaDeFigueiredoGossez1999})\label{prop: second eigenvalue}
Let $\Gamma=\{ \gamma\in C([0,1],W_0^{1,p}(\Omega)): \ \int_\Omega |\gamma(s)|^p\,dx=1, \ s\in [0,1],\ \gamma(0)=-\phi_1,\ \gamma(1)=\phi_1 \}$. Then
\[
\lambda_2=\inf_{\gamma\in\Gamma} \max_{u\in\gamma([0,1])} \int_\Omega |\nabla u|^p\,dx.
\]
\end{proposition}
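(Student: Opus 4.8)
The plan is to obtain the four solutions as a direct application of Theorem \ref{theorem:not ordered} and Theorem \ref{theorem:invariance_of_the_cone}, the fourth one being the trivial solution $u\equiv0$, which solves \eqref{eq:main_equation} because $(f5)$ forces $f(x,0)=0$. First I would manufacture two ordered pairs of strict sub-supersolutions out of the first eigenfunction $\phi_1$ of Proposition \ref{prop: first eigenvalue and eigenfuntion}. Since $(f5)$ gives $f(x,t)\sim\lambda|t|^{p-2}t$ near $t=0$ with $\lambda>\lambda_2>\lambda_1$, for small $\delta>0$ the functions $\alpha_2=\delta\phi_1$ and $\beta_1=-\delta\phi_1$ are, respectively, a strict subsolution and a strict supersolution, with remainders $a_2,b_1$ comparable to $(\lambda-\lambda_1)(\delta\phi_1)^{p-1}$. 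Since $(f4)$ gives $f(x,t)\le\mu|t|^{p-2}t$ for $|t|>R$ with $\mu<\lambda_1$, for large $T>0$ the functions $\alpha_1=-T\phi_1$ and $\beta_2=T\phi_1$ are a strict subsolution and a strict supersolution, with remainders comparable to $(\lambda_1-\mu)(T\phi_1)^{p-1}$. For $\delta<T$ one has $\alpha_1<\beta_1<\alpha_2<\beta_2$ a.e.\ in $\Omega$, so all the ordering hypotheses of Theorem \ref{theorem:not ordered} hold, in particular $\beta_1<\alpha_2$ a.e.

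The first technical point is to control the construction in the boundary layer, where $\phi_1\to0$ and the pointwise regimes of $(f4)$ and $(f5)$ degrade; here I would use $\phi_1\asymp\dist(x,\partial\Omega)$ near $\partial\Omega$ (from $\phi_1\in C^1(\overline\Omega)$ and $\partial\phi_1/\partial\nu<0$) to pin down the boundary behavior $a_i,b_i\asymp\dist(x,\partial\Omega)^{p-1}$ of all four remainders. With this I verify local $K$-invariance of the four cones via Theorem \ref{theorem:invariance_of_the_cone}. For $p\ge2$ the $L^r$ condition \eqref{eq:integrability_of_1/a} reduces to a harmless boundary-distance integrability; for $p<2$, condition \eqref{eq:integrability_of_1/a_p<2} requires $\dist(x,\partial\Omega)^{-(2-p)\frac{p^\star}{p^\star-2}}\in L^1$ near $\partial\Omega$, i.e.\ $(2-p)\frac{p^\star}{p^\star-2}<1$. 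Substituting $p^\star=Np/(N-p)$ and simplifying, this is exactly $Np^2-(N-2)p-2N>0$, whose positive root is $({N-2+\sqrt{9N^2-4N+4}})/({2N})$. Thus the stated lower bound on $p$ is precisely the threshold that makes the four cones locally $K$-invariant, which is the crux tying the hypothesis on $p$ to Theorem \ref{theorem:invariance_of_the_cone}.

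Having checked the hypotheses, I would apply Theorem \ref{theorem:not ordered} to obtain three distinct solutions $u_1,u_2,u_3$. Reading off the locations: $u_1\le\beta_1=-\delta\phi_1<0$ is a negative solution and $u_2\ge\alpha_2=\delta\phi_1>0$ is a positive solution, both automatically nontrivial since $0\notin\mathcal{C}^{\beta_1}$ and $0\notin\mathcal{C}_{\alpha_2}$. Together with $u\equiv0$ this already accounts for three of the four solutions and leaves only $u_3$ to be analyzed.

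The hard part, which I expect to be the main obstacle, is to show that $u_3$ is genuinely a sign-changing solution distinct from the trivial one, because the locating set $(\mathcal{C}_{\alpha_1}\cap\mathcal{C}^{\beta_2})\setminus(\mathcal{C}_{\alpha_2}\cup\mathcal{C}^{\beta_1})$ contains $0$ and also contains small one-signed functions, so its membership alone excludes neither $u_3=0$ nor a one-signed $u_3$. This is exactly where the strict inequality $\lambda>\lambda_2$ in $(f5)$ is essential rather than merely $\lambda>\lambda_1$. Using the variational characterization of $\lambda_2$ in Proposition \ref{prop: second eigenvalue}, I would pick a path $\gamma\in\Gamma$ joining $-\phi_1$ to $\phi_1$ on the unit $L^p$-sphere with $\max_\gamma\int_\Omega|\nabla u|^p<\lambda$; for small $s>0$ the rescaled path $s\gamma$ lies in the large order interval $\mathcal{C}_{\alpha_1}\cap\mathcal{C}^{\beta_2}$, has endpoints in $\mathcal{C}^{\beta_1}$ and $\mathcal{C}_{\alpha_2}$, and satisfies $J(su)\approx\frac{s^p}{p}\left(\int_\Omega|\nabla u|^p-\lambda\int_\Omega|u|^p\right)<0=J(0)$ along it. This low-energy linking path is the geometric input behind the min-max producing $u_3$ in the proof of Theorem \ref{theorem:not ordered}, placing its critical level strictly below $J(0)=0$ and so forcing $u_3\ne0$. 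Finally, to upgrade "$u_3\notin\mathcal{C}_{\alpha_2}\cup\mathcal{C}^{\beta_1}$'' to a genuine change of sign I would argue by contradiction: a nontrivial one-signed solution, say $u_3\ge0$, is $>0$ in $\Omega$ with $\partial u_3/\partial\nu<0$ by the strong maximum and Hopf-type lemmas of V\'azquez together with $C^1$ regularity, and comparison with $\delta\phi_1$ would then trap it in $\mathcal{C}_{\alpha_2}$, contradicting the location of $u_3$; the case $u_3\le0$ is symmetric. I flag this last step as the delicate one, precisely because the absence of a general strong comparison principle for \eqref{eq:main_equation} (the difficulty stressed throughout the paper) makes the trapping argument the point requiring the most care.
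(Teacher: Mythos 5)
Your proposal does not prove the statement it is attached to. The statement is Proposition \ref{prop: second eigenvalue}, the min-max characterization of the second eigenvalue of the $p$-Laplacian: with $\Gamma$ the set of continuous paths on the unit $L^p$-sphere of $W_0^{1,p}(\Omega)$ joining $-\phi_1$ to $\phi_1$, one must establish the identity $\lambda_2=\inf_{\gamma\in\Gamma}\max_{u\in\gamma([0,1])}\int_\Omega|\nabla u|^p\,dx$. What you have written is instead a blueprint for the proof of Theorem \ref{theorem:application}, the four solutions theorem: construction of $\alpha_1,\beta_1,\alpha_2,\beta_2$ from $\phi_1$, verification of the $K$-invariance threshold $p>({N-2+\sqrt{9N^2-4N+4}})/(2N)$, and the sign-change argument for $u_3$. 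That theorem \emph{uses} Proposition \ref{prop: second eigenvalue} (precisely where you invoke the low-energy path joining $\pm\phi_1$ to push the mountain-pass level below $0$), but nothing in your text addresses why the displayed identity itself holds: you prove neither $c:=\inf_{\gamma\in\Gamma}\max_{\gamma}\int_\Omega|\nabla u|^p\,dx\geq\lambda_2$ nor $c\leq\lambda_2$.

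For the record, the paper does not prove this proposition either; it is imported verbatim from \cite{CuestaDeFigueiredoGossez1999} (Corollary 3.2 there), so the honest options for a blind attempt were to cite it or to reprove it. A genuine proof runs along lines entirely absent from your proposal: one studies $\tilde J(u)=\int_\Omega|\nabla u|^p\,dx$ restricted to the $C^1$ manifold $S=\{u\in W_0^{1,p}(\Omega):\int_\Omega|u|^p\,dx=1\}$, verifies a Palais--Smale condition for the constrained problem so that the min-max level $c$ is a critical value, and uses the fact that critical points of $\tilde J|_S$ at level $\lambda$ are eigenfunctions of $-\Delta_p$ with eigenvalue $\lambda$. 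One then shows $c>\lambda_1$, which is the delicate step: since $\lambda_1$ is simple and attained only at $\pm\phi_1$, every admissible path must leave a neighborhood of $\{\pm\phi_1\}$, and a quantitative argument shows $\tilde J$ is bounded away from $\lambda_1$ outside such neighborhoods; combined with the definition of $\lambda_2$ as the smallest eigenvalue above $\lambda_1$, this yields $c\geq\lambda_2$. The reverse inequality $c\leq\lambda_2$ requires exhibiting admissible paths with maximal level arbitrarily close to $\lambda_2$, which in \cite{CuestaDeFigueiredoGossez1999} is done through the analysis of the first nontrivial curve of the Fu\v{c}\'{\i}k spectrum (note that for $p\neq2$ one cannot simply take paths in the two-dimensional span of eigenfunctions, since level sets of $\tilde J$ are not invariant under such linear combinations). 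Your sketch of Theorem \ref{theorem:application}, taken on its own terms, tracks the paper's Section \ref{section: application} reasonably well, but as a proof of Proposition \ref{prop: second eigenvalue} it is a non-starter.
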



\section{Minimal energy solution between ordered sub-supersolutions}\label{section:proof of Theorem 1.2}

In this section we prove Theorem \ref{theorem:order}. For this reason we will assume throughout the section that $f$ satisfies $(f1)-(f3)$ and that a subsolution $\alpha$ and a supersolution $\beta$ are given, such that $\alpha(x)<\beta(x)$ for a.e. $x\in\Omega$.

Let us start by recalling the properties of the distance of a point from a convex set, together with the notation of projection that we will use ahead.
\begin{lemma}\label{lemma:properties_of_distance}
Let $\alpha$ be a subsolution for \eqref{eq:main_equation}. Given any $u\in W_0^{1,p}(\Omega)$, we have
\begin{itemize}
\item[(i)] there exists a unique $\pi_\alpha(u)\in {\mathcal C}_{\alpha}$ satisfying $\|u-\pi_\alpha(u)\|=\dist(u,{\mathcal C}_{\alpha})$;
\item[(ii)] $\dist(u,{\mathcal C}_{\alpha})\leq \|[u-\alpha]^-\|$, where, as usual, $v^+=\max\{v,0\}$ and $v^-=\max\{-v,0\}$;
\item[(iii)] for every $1<s<p^\star$ it holds $\|[u-\alpha]^-\|_{L^s(\Omega)}\leq C \dist(u,{\mathcal C}_{\alpha})$, where $C$ is the Sobolev constant of the embedding $W_0^{1,p}(\Omega)\hookrightarrow L^s(\Omega)$.
\end{itemize}
\end{lemma}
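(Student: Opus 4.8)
The three assertions are of increasing subtlety, and I would prove them in order: (i) as an instance of the nearest-point projection theorem in a uniformly convex space, (ii) by exhibiting an explicit competitor inside the cone, and (iii) by combining the projection from (i) with a pointwise comparison and the Sobolev embedding.

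For (i), the plan is to invoke the standard projection theorem onto a closed convex set in a uniformly convex Banach space. Since $1<p<\infty$, the space $W_0^{1,p}(\Omega)$ is uniformly convex (the gradient map embeds it isometrically onto a closed subspace of $(L^p(\Omega))^N$, to which Clarkson's inequalities apply), hence reflexive and strictly convex. It then suffices to check that ${\mathcal C}_\alpha$ is nonempty, convex and closed. Nonemptiness follows from $\alpha\le0$ on $\partial\Omega$: this gives $\alpha^+\in W_0^{1,p}(\Omega)$ with $\alpha^+\ge\alpha$, so $\alpha^+\in{\mathcal C}_\alpha$. Convexity is immediate, and closedness follows because strong $W_0^{1,p}$-convergence yields a.e. convergence along a subsequence, preserving the pointwise constraint $u\ge\alpha$. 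Existence of a minimizer $\pi_\alpha(u)$ then comes from the direct method: a minimizing sequence is bounded, hence has a weak limit lying in the (weakly closed) set ${\mathcal C}_\alpha$, and the norm is weakly lower semicontinuous. Uniqueness is forced by strict convexity of the norm, since any two minimizers would have a midpoint in ${\mathcal C}_\alpha$ with strictly smaller distance unless they coincide.

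For (ii), I would produce a single admissible competitor realizing the claimed bound. Set $w:=u+[u-\alpha]^-=\max\{u,\alpha\}$, so that $w\ge\alpha$ a.e. Moreover $[u-\alpha]^-\in W_0^{1,p}(\Omega)$: the function $u-\alpha$ lies in $W^{1,p}(\Omega)$ and, since $u=0$ and $\alpha\le0$ on $\partial\Omega$, one has $u-\alpha\ge0$ on $\partial\Omega$, so its negative part has zero trace. Hence $w\in{\mathcal C}_\alpha$, and since $\nabla(u-w)=-\nabla[u-\alpha]^-$, we obtain $\dist(u,{\mathcal C}_\alpha)\le\|u-w\|=\|[u-\alpha]^-\|$. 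For (iii), the key observation is the pointwise inequality $[u-\alpha]^-\le|u-\pi_\alpha(u)|$ a.e.: where $u\ge\alpha$ the left side vanishes, while where $u<\alpha$ we have $[u-\alpha]^-=\alpha-u$ and, since $\pi_\alpha(u)\ge\alpha>u$, also $|u-\pi_\alpha(u)|=\pi_\alpha(u)-u\ge\alpha-u$. Taking $L^s$-norms and using $u-\pi_\alpha(u)\in W_0^{1,p}(\Omega)$ together with the Sobolev embedding $W_0^{1,p}(\Omega)\hookrightarrow L^s(\Omega)$ for $1<s<p^\star$ gives $\|[u-\alpha]^-\|_{L^s(\Omega)}\le\|u-\pi_\alpha(u)\|_{L^s(\Omega)}\le C\|u-\pi_\alpha(u)\|=C\dist(u,{\mathcal C}_\alpha)$, with $C$ the Sobolev constant.

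The computations are essentially routine; the only points requiring genuine care are the membership of $[u-\alpha]^-$ and $\pi_\alpha(u)-u$ in $W_0^{1,p}(\Omega)$, which relies on the boundary condition $\alpha\le0$ on $\partial\Omega$, and the uniqueness in (i), which rests on strict convexity of the $W_0^{1,p}$-norm (equivalently its uniform convexity), a property specific to the range $1<p<\infty$. I expect (i) to be the main conceptual ingredient, whereas (ii) and (iii) reduce to correctly identifying the competitor $\max\{u,\alpha\}$ and the pointwise domination of $[u-\alpha]^-$ by the projection distance.
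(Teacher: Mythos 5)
Your proposal is correct and follows essentially the same route as the paper: (i) via the nearest-point projection onto a nonempty closed convex set in the uniformly convex space $W_0^{1,p}(\Omega)$, (ii) via the competitor $\max\{u,\alpha\}=\alpha+[u-\alpha]^+$, and (iii) via the pointwise domination $[u-\alpha]^-\leq[u-w]^-$ for $w\in{\mathcal C}_\alpha$ combined with the Sobolev embedding (the paper takes the infimum over all such $w$ rather than specializing to $w=\pi_\alpha(u)$, an immaterial difference). Your additional care about the memberships $[u-\alpha]^-\in W_0^{1,p}(\Omega)$ and $\alpha^+\in{\mathcal C}_\alpha$ is a welcome elaboration of points the paper leaves implicit.
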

\begin{proof}
Being ${\mathcal C}_{\alpha}$ a closed convex set, property (i) follows. To prove (ii) it is enough to notice that we can choose $w=\alpha+[u-\alpha]^+\in {\mathcal C}_{\alpha}$ in the definition of distance of $u$ from ${\mathcal C}_{\alpha}$. Finally, for every $w\in {\mathcal C}_{\alpha}$ we have $u-w\leq u-\alpha$ and therefore
\[
\|[u-\alpha]^-\|_{L^s(\Omega)}\leq \inf_{w\in {\mathcal C}_{\alpha}} \|[u-w]^-\|_{L^s(\Omega)}\leq
C \inf_{w\in {\mathcal C}_{\alpha}} \|u-w\| = C \dist(u,{\mathcal C}_{\alpha}),
\]
and the lemma is proved.
\end{proof}
\begin{remark}
Naturally, an analogous result is valid for ${\mathcal C}^{\beta}$ and we denote by $\pi^\beta(u)$ the unique element in ${\mathcal C}^{\beta}$ satisfying $\|u-\pi^\beta(u)\|=\dist(u,{\mathcal C}^{\beta})$. To avoid repeating the same arguments, in what follows we will only focus on the properties concerning $\alpha$.
\end{remark}
Let us show that $J$ satisfies the Palais-Smale condition in the intersection of the two cones without assuming an Ambrosetti-Rabinowitz condition on $f$. We prove the result directly for the slightly larger set ${\mathcal C}_{\alpha,\ep}\cap {\mathcal C}^{\beta, \ep}$, since we will need it in the subsequent sections.
\begin{lemma}\label{lemma:boundedness}
Fix an arbitrary $\ep\geq 0$. Then
\begin{itemize}
\item[(i)]
for every $1<s< p^\star$ there exists a constant $C=C(\alpha,\beta,\ep,s)>0$ such that
\[
\|u\|_{L^s(\Omega)}\leq C \quad\forall u\in {\mathcal C}_{\alpha,\ep}\cap {\mathcal C}^{\beta, \ep}.
\]
\item[(ii)] $J$ is bounded below in ${\mathcal C}_{\alpha,\ep}\cap {\mathcal C}^{\beta, \ep}$.
\item [(iii)] if $J$ is bounded in a subset $\mathcal{U}\subset{\mathcal C}_{\alpha,\ep}\cap {\mathcal C}^{\beta, \ep}$, then there exists $C>0$ such that
\[
\|u\|+\|K(u)\|\leq C \quad\forall u\in \mathcal{U}.
\]
\item[(iv)] if $J$ is bounded in a subset $\mathcal{U}\subset{\mathcal C}_{\alpha,\ep}\cap {\mathcal C}^{\beta, \ep}$, then there exists $C>0$ such that for every $u\in \mathcal{U}$,
\begin{gather*}
C \|u-K(u)\|^{p-1} \leq \|J'(u)\|_{W^{-1,p'}(\Omega)} \leq C \|u-K(u)\| \quad\mbox{when } p\geq2, \\
C \|u-K(u)\| \leq \|J'(u)\|_{W^{-1,p'}(\Omega)} \leq C \|u-K(u)\|^{p-1} \quad\mbox{when } p\leq2.
\end{gather*}
\item[(v)] $J$ satisfies the Palais-Smale condition in ${\mathcal C}_{\alpha,\ep}\cap {\mathcal C}^{\beta, \ep}$, that is, if $(u_n)_n\subseteq {\mathcal C}_{\alpha,\ep}\cap {\mathcal C}^{\beta, \ep}$ with $J(u_n)\ri c_0\in\RR$ and $J'(u_n)\ri 0$ in $W^{-1,p'}(\Omega)$, then $u_n\ri u$ in $W_0^{1,p}(\Omega)$ and $J'(u)=0$.
\end{itemize}
\end{lemma}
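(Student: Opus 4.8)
The plan is to establish the five claims in sequence, each resting on the previous ones; the geometric confinement provided by the enlarged cones is what replaces the missing Ambrosetti--Rabinowitz condition throughout. For (i), the idea is to control $u$ pointwise by $\alpha$, $\beta$ and the two ``overshoots''. Since $\alpha,\beta\in W^{1,p}(\Omega)$ embed into $L^s(\Omega)$ for every $s<p^\star$, it suffices to bound the part of $u$ lying below $\alpha$ and the part lying above $\beta$. Concretely I would verify the pointwise inequality
\[
|u|\leq |\alpha|+|\beta|+[u-\alpha]^-+[u-\beta]^+\quad\text{a.e. in }\Omega,
\]
by distinguishing the three regions $\{u\leq\alpha\}$, $\{u\geq\beta\}$ and $\{\alpha<u<\beta\}$. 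Then Lemma~\ref{lemma:properties_of_distance}(iii) together with its analogue for $\mathcal C^{\beta}$ gives $\|[u-\alpha]^-\|_{L^s}\leq C\,\dist(u,\mathcal C_{\alpha})<C\ep$ and $\|[u-\beta]^+\|_{L^s}\leq C\,\dist(u,\mathcal C^{\beta})<C\ep$, whence $\|u\|_{L^s}\leq\|\alpha\|_{L^s}+\|\beta\|_{L^s}+2C\ep=:C(\alpha,\beta,\ep,s)$.

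For (ii) and (iii), I would feed the growth condition $(f1)$ into $F$, obtaining $|F(x,t)|\leq c_1|t|+\tfrac{c_2}{q}|t|^q$, and hence, by part (i) with $s=1$ and $s=q<p^\star$,
\[
\int_\Omega F(x,u)\,dx\leq c_1\|u\|_{L^1(\Omega)}+\frac{c_2}{q}\|u\|_{L^q(\Omega)}^q\leq C
\qquad\text{for all }u\in\mathcal C_{\alpha,\ep}\cap\mathcal C^{\beta,\ep}.
\]
Since $J(u)=\tfrac1p\|u\|^p-\int_\Omega F(x,u)\,dx\geq -C$, claim (ii) follows; and if moreover $J(u)\leq C_0$ on $\mathcal U$, then $\tfrac1p\|u\|^p\leq C_0+C$, so $\|u\|$ is bounded and, by Remark~\ref{rem:bounded norm}, so is $\|K(u)\|$, which gives (iii).

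For (iv), set $N:=\|u\|+\|K(u)\|$, which is bounded on $\mathcal U$ by (iii) (and the degenerate case $u=K(u)$ forces $\|J'(u)\|_{W^{-1,p'}}=0=\|u-K(u)\|$, so all four inequalities hold trivially). Combining Lemma~\ref{lemma:K_pseudogradient_first_estimate} with the dual pairing $J'(u)[u-K(u)]\leq\|J'(u)\|_{W^{-1,p'}}\|u-K(u)\|$ and dividing by $\|u-K(u)\|$ yields the \emph{lower} bounds on $\|J'(u)\|_{W^{-1,p'}}$, while Lemma~\ref{lemma:K_pseudogradient_second_estimate} gives the matching \emph{upper} bounds directly. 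In both cases the only subtlety is the factor $N^{p-2}$: when $p\geq2$ it is bounded above, which tames the upper estimate; when $p\leq2$ it is bounded below away from zero, which tames the lower estimate. Both facts follow precisely because $N$ is bounded by (iii).

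Finally, for (v), a Palais--Smale sequence has $J(u_n)$ bounded, so part (iii) applied with $\mathcal U=\{u_n\}_n$ gives $\sup_n\|u_n\|<\infty$; then Lemma~\ref{lemma:palais_smale_if_u_n_bounded} produces $u$ with $u_n\to u$ in $W_0^{1,p}(\Omega)$ and $J'(u)=0$. The essential point, and the main obstacle, is part (i): the uniform $L^s$ confinement on the \emph{enlarged} cones is exactly what compensates for the absence of an Ambrosetti--Rabinowitz condition and upgrades ``$J$ bounded'' to ``$\|u\|$ bounded''. Once this is secured, (ii)--(v) are essentially bookkeeping over the estimates already collected in Section~\ref{section:preliminaries}.
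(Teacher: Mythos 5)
Your proposal is correct and follows essentially the same route as the paper: part (i) via a pointwise decomposition of $|u|$ controlled through the distance to the cones (the paper uses the projections $\pi_\alpha(u),\pi^\beta(u)$ directly where you use $[u-\alpha]^-,[u-\beta]^+$ and Lemma \ref{lemma:properties_of_distance}(iii), which amounts to the same estimate), then $(f1)$ to get $\tfrac1p\|u\|^p\leq J(u)+C$ for (ii)--(iii), Lemmas \ref{lemma:K_pseudogradient_first_estimate} and \ref{lemma:K_pseudogradient_second_estimate} together with the boundedness of $\|u\|+\|K(u)\|$ for (iv), and Lemma \ref{lemma:palais_smale_if_u_n_bounded} for (v). You in fact supply more detail than the paper does for (iv), and your handling of the factor $(\|u\|+\|K(u)\|)^{p-2}$ in the two regimes is exactly the intended argument.
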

\begin{proof}
(i) Note that
\[
u\geq \alpha + u-\pi_\alpha(u)\quad\mbox{and}\quad u\leq \beta +u-\pi^\beta(u)
\]
thus
\[
|u|\leq |\alpha|+|u-\pi_\alpha(u)|+|\beta|+|u-\pi^\beta(u)|.
\]
Since $u\in {\mathcal C}_{\alpha,\ep}\cap {\mathcal C}^{\beta, \ep}$, by the Sobolev embedding we infer that for every $1<s< p^\star$ there exists a positive constant $C$ such that
\[\begin{split}
\|u\|_{L^s(\Omega)}&\leq \|\alpha\|_{L^s(\Omega)}+C\| u-\pi_\alpha(u) \|+\|\beta\|_{L^s(\Omega)}+
C\| u-\pi^\beta(u)\|\\
& \leq C(\|\alpha\|_{L^s(\Omega)}+\|\beta\|_{L^s(\Omega)}+2\epsilon).
\end{split}
\]

(ii) By $(f1)$ and the previous estimates, if $u\in {\mathcal C}_{\alpha,\ep}\cap C^{\beta, \ep}$, then
\begin{equation}\label{eq:J bounded below}
\frac{1}{p}\|u\|^p\leq J(u)+C,
\end{equation}
where $C$ is a positive constant.

(iii) By Remark \ref{rem:bounded norm} and relation \eqref{eq:J bounded below} we infer that if $J(u)$ is bounded in $\mathcal{U}$, then $\|u\|+\|K(u)\|$ is also bounded in $\mathcal{U}$.

(iv) We conclude this point by taking into consideration what we just proved and Lemmas \ref{lemma:K_pseudogradient_first_estimate} and \ref{lemma:K_pseudogradient_second_estimate}.

(v) We deduce from point (iii) above that $\{u_n\}$ is bounded and therefore Lemma \ref{lemma:palais_smale_if_u_n_bounded} applies, providing the thesis.
\end{proof}
Next, let us prove that ${\mathcal C}_{\alpha}$ and ${\mathcal C}_{\beta}$ are invariant under the action of $K$. This is due to assumption $(f3)$ and to the weak comparison principle (see \cite[Lemma 3.1]{Tolksdorf1983} and \cite[Theorem 1.2]{Damascelli1998}).
\begin{lemma}\label{lemma:comparison_principle}
Given any subsolution $\alpha\in W_0^{1,p}(\Omega)$ and any supersolution $\beta\in W_0^{1,p}(\Omega)$ for equation \eqref{eq:main_equation} it holds $K({\mathcal C}_{\alpha})\subseteq {\mathcal C}_{\alpha}$ and $K({\mathcal C}_{\beta})\subseteq {\mathcal C}_{\beta}$.
\end{lemma}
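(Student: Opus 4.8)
The plan is to deduce both inclusions from the weak comparison principle for the strictly monotone operator $w\mapsto -\Delta_p w + M|w|^{p-2}w$. I will carry out the argument for $K({\mathcal C}_\alpha)\subseteq {\mathcal C}_\alpha$, the inclusion for the supersolution being entirely analogous with reversed inequalities.

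Fix $u\in {\mathcal C}_\alpha$, so $u\geq\alpha$ a.e., and set $v=K(u)$. First I would recast the subsolution inequality in terms of $h$: since $-\Delta_p\alpha\leq f(x,\alpha)=h(x,\alpha)-M|\alpha|^{p-2}\alpha$, we obtain $-\Delta_p\alpha+M|\alpha|^{p-2}\alpha\leq h(x,\alpha)$ in the weak sense. On the other hand $v$ solves $-\Delta_p v+M|v|^{p-2}v=h(x,u)$. Because $u\geq\alpha$ and $h(x,\cdot)$ is nondecreasing by $(f3)$, we have $h(x,u)\geq h(x,\alpha)$, so $v$ is a weak supersolution and $\alpha$ a weak subsolution of one and the same equation, and it remains to compare them.

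The core step is to subtract the two weak formulations and test against $\varphi=[\alpha-v]^+$. Since $\alpha\leq 0$ on $\partial\Omega$ (here in fact $\alpha\in W_0^{1,p}$) and $v\in W_0^{1,p}(\Omega)$, this $\varphi$ is an admissible nonnegative test function in $W_0^{1,p}(\Omega)$. The subtraction yields
\[
\int_\Omega (|\nabla\alpha|^{p-2}\nabla\alpha-|\nabla v|^{p-2}\nabla v)\cdot\nabla\varphi\,dx + M\int_\Omega(|\alpha|^{p-2}\alpha-|v|^{p-2}v)\varphi\,dx \leq \int_\Omega(h(x,\alpha)-h(x,u))\varphi\,dx \leq 0.
\]
On the set $\{\alpha>v\}$ one has $\nabla\varphi=\nabla(\alpha-v)$ while $\nabla\varphi=0$ elsewhere, so by the vector monotonicity inequalities of Lemma \ref{lemma:inequalities_for_vectors} the first integrand is pointwise nonnegative, and by the monotonicity of $t\mapsto|t|^{p-2}t$ so is the second. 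Hence both integrals must vanish, which forces $\nabla(\alpha-v)=0$ a.e.\ on $\{\alpha>v\}$, i.e.\ $\nabla\varphi\equiv 0$ on $\Omega$. Since $\varphi\in W_0^{1,p}(\Omega)$, Poincar\'e's inequality gives $\varphi\equiv 0$, that is $\alpha\leq v=K(u)$ a.e., whence $K(u)\in{\mathcal C}_\alpha$.

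The only delicate points are the admissibility of $\varphi=[\alpha-v]^+$ as a test function and the rigorous passage from ``the monotone integrand vanishes'' to ``$\nabla\varphi=0$'', which relies on the strict monotonicity embodied in Lemma \ref{lemma:inequalities_for_vectors}; everything else is the standard comparison argument of \cite{Tolksdorf1983,Damascelli1998}. The inclusion for the supersolution $\beta$ follows by testing the reversed inequalities against $[v-\beta]^+$.
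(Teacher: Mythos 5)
Your proposal is correct and follows essentially the same route as the paper: rewrite the sub/supersolution inequalities in terms of $h$, use $(f3)$ to compare the right-hand sides, test the difference of the weak formulations with $[\alpha-K(u)]^+$, and conclude from the monotonicity inequalities of Lemma \ref{lemma:inequalities_for_vectors} (the paper invokes the integrated form \eqref{eq:inequality_appearing_several_times}, you argue pointwise and finish with Poincar\'e, which is the same mechanism). No gaps.
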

\begin{proof}
We shall prove the result for ${\mathcal C}_{\alpha}$. Given $u\in {\mathcal C}_{\alpha}$, we need to prove that $K(u)\geq\alpha$. By $(f3)$ we have
\[
\left\{\begin{array}{ll}
-\Delta_p\alpha-(-\Delta_p K(u))+M(|\alpha|^{p-2}\alpha-|K(u)|^{p-2}K(u))\leq0 \quad &\text{in }\Omega\\
\alpha-K(u)\leq0 &\text{on }\partial\Omega.
\end{array}\right.
\]
By testing with $[\alpha-K(u)]^+$, and recalling that $(|\xi|^{p-2}\xi-|\eta|^{p-2}\eta)\cdot(\xi-\eta)\geq 0$ by Lemma \ref{lemma:inequalities_for_vectors}, we obtain
\[
\int_\Omega \left(|\nabla\alpha|^{p-2}\nabla\alpha-|\nabla K(u)|^{p-2}\nabla K(u)\right)\cdot \nabla [\alpha-K(u)]^+ \,dx \leq 0.
\]
Finally, relation \eqref{eq:inequality_appearing_several_times} implies $[\alpha-K(u)]^+\equiv 0$, which concludes the proof.
\end{proof}
We have all the tools to prove Theorem \ref{theorem:order}. We follow the approach of De Figueiredo and Solimini \cite{DeFigueiredoSolimini1984}.
\begin{proof}[Proof of Theorem \ref{theorem:order}]
By Ekeland's variational principle there exists \\ $(u_n)_n\subset {\mathcal C}_{\alpha}\cap{\mathcal C}^{\beta}$ such that
\begin{equation}\label{eq:ekeland1}
J(u_n)\leq\inf_{{\mathcal C}_{\alpha}\cap{\mathcal C}^{\beta}}J+\frac{1}{n}
\end{equation}
and
\begin{equation}\label{eq:ekeland2}
J(u_n)\leq J(u)+\frac{1}{n}\|u-u_n\|\quad\forall u\in {\mathcal C}_{\alpha}\cap{\mathcal C}^{\beta}.
\end{equation}
We choose $u=(1-t)u_n+tK(u_n),\, 0\leq t<1$. By Lemma \ref{lemma:comparison_principle} we have that $K(u_n)\subset {\mathcal C}_{\alpha}\cap{\mathcal C}^{\beta}$, hence, by convexity, $u\in {\mathcal C}_{\alpha}\cap{\mathcal C}^{\beta}$. Moreover,
\begin{equation}\label{eq:new form of u}
u=u_n-t(u_n-K(u_n)),
\end{equation}
so, when using the Taylor expansion of $J$ centered at $u_n$ we obtain that
\[
J(u)=J(u_n)-tJ'(u_n)[u_n-K(u_n)]+o(t\|u_n-K(u_n)\|).
\]
Combining this with \eqref{eq:ekeland2} and \eqref{eq:new form of u} we infer that
\[
tJ'(u_n)[u_n-K(u_n)]+o(t\|u_n-K(u_n)\|)\leq\frac{t\|u_n-K(u_n)\|}{n}.
\]
In the above inequality we apply Lemma \ref{lemma:K_pseudogradient_first_estimate} and we come to
\[\begin{split}
\frac{\|u_n-K(u_n)\|}{n}&+\frac{o(t\|u_n-K(u_n)\|)}{t}\geq\\ &\left\{\begin{array}{ll}
              C\|u_n-K(u_n)\|^2 (\|u_n\|+\|K(u_n)\|)^{p-2} \quad &\text{ if } 1<p\leq 2 \\
              C\|u_n-K(u_n)\|^p  \quad &\text{ if } p\geq 2.
                         \end{array}\right.
\end{split}\]
We fix $n$ and let $t\ri 0$ in the previous relation. Then we divide by $\|u_n-K(u_n)\|$ and we get that \begin{equation}\label{p<2}
\|u_n-K(u_n)\|\leq\frac{1}{nC} (\|u_n\|+\|K(u_n)\|)^{2-p} \quad \text{ when } 1<p\leq 2
\end{equation}
and
\begin{equation}\label{p>2}
\|u_n-K(u_n)\|^{p-1}\leq \frac{1}{nC}\quad \text{ when } p\geq 2.
\end{equation}
Now, we see from \eqref{eq:ekeland1} and Lemma \ref{lemma:boundedness} (iii) that the quantity $\|u_n\|+\|K(u_n)\|$ is bounded. Therefore, letting $n\ri\infty$ in \eqref{p<2} and \eqref{p>2} we obtain that
\[
\|u_n-K(u_n)\|\ri 0\quad\mbox{as } n\ri\infty.
\]
Next, we apply Lemma \ref{lemma:boundedness} (iv) according to the sign of $p-2$ and we have that $J'(u_n)\ri 0$ in $W^{-1,p'}(\Omega)$. Using the Palais-Smale condition proved in Lemma \ref{lemma:boundedness} (v) we deduce that $(u_n)_n$ is convergent to a $u_0$ in $W^{1,p}(\Omega)$ and this provides us the solution $u_0$ to problem \eqref{eq:main_equation}.
\end{proof}

\section{Mountain pass solution in presence of multiple sub-supersolutions}\label{section:not ordered sub-supersolutions}

In this section we prove Theorem \ref{theorem:not ordered}. We will assume that $f$ satisfies $(f1)-(f3)$ and that a strict subsolution $\alpha$ and a strict supersolution $\beta$ for \eqref{eq:main_equation} are given, with the property that ${\mathcal C}_\alpha$ and $\mathcal{C}^\beta$ are locally $K$-invariant.
\begin{lemma}\label{lemma: cones disjoint}
If $\alpha> \beta$ on a set of positive measure then there exists $\bar \epsilon$ such that $\mathcal{C}_{\alpha,\epsilon}\cap \mathcal{C}^{\beta,\epsilon}=\emptyset$ for every $0<\epsilon<\bar\epsilon$.
\end{lemma}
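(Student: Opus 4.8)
The plan is to argue quantitatively, extracting an explicit threshold $\bar\epsilon$ below which the two enlarged cones cannot meet. Set $A:=\{x\in\Omega:\ \alpha(x)>\beta(x)\}$, which by hypothesis has positive measure, and put $\delta:=\int_A(\alpha-\beta)\,dx$. Since $\alpha,\beta\in W^{1,p}(\Omega)\cap L^\infty(\Omega)$, the integrand lies in $L^1(\Omega)$, so $\delta$ is a finite and strictly positive constant depending only on $\alpha$ and $\beta$. I will show that the existence of any $u\in\mathcal{C}_{\alpha,\epsilon}\cap\mathcal{C}^{\beta,\epsilon}$ forces $\delta\le C\epsilon$ for a fixed constant $C$, so that the intersection must be empty once $\epsilon<\bar\epsilon:=\delta/C$.

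The key step is an elementary pointwise inequality on $A$. Fix $u\in\mathcal{C}_{\alpha,\epsilon}\cap\mathcal{C}^{\beta,\epsilon}$. For a.e.\ $x\in A$ one has $\alpha(x)>\beta(x)$, so $u(x)$ cannot simultaneously satisfy $u(x)\ge\alpha(x)$ and $u(x)\le\beta(x)$: the constraints defining $\mathcal{C}_\alpha$ and $\mathcal{C}^\beta$ are incompatible on $A$. Writing $a=\alpha(x)$, $b=\beta(x)$, $t=u(x)$ with $a>b$, a direct check of the three cases $t\le b$, $b<t\le a$, $t>a$ gives
\[
[u-\alpha]^-(x)+[u-\beta]^+(x)=(a-t)^+ +(t-b)^+\ \ge\ a-b ,
\]
and hence $[u-\alpha]^-+[u-\beta]^+\ge\alpha-\beta$ a.e.\ on $A$.

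It remains to integrate and invoke the distance estimates. Integrating the previous inequality over $A$ and enlarging the domain of integration to $\Omega$ yields
\[
\delta\ \le\ \|[u-\alpha]^-\|_{L^1(\Omega)}+\|[u-\beta]^+\|_{L^1(\Omega)} .
\]
Fixing any exponent $s\in(1,p^\star)$ and using H\"older's inequality on the bounded domain $\Omega$, both right-hand terms are controlled by the corresponding $L^s$ norms up to the factor $|\Omega|^{(s-1)/s}$. Lemma \ref{lemma:properties_of_distance}(iii), together with its analogue for $\mathcal{C}^\beta$ recorded in the subsequent remark, then bounds these $L^s$ norms by $C_s\,\dist(u,\mathcal{C}_\alpha)<C_s\epsilon$ and $C_s\,\dist(u,\mathcal{C}^\beta)<C_s\epsilon$ respectively. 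Collecting constants gives $\delta\le 2|\Omega|^{(s-1)/s}C_s\,\epsilon=:C\epsilon$, so any nonempty intersection forces $\epsilon\ge\bar\epsilon:=\delta/C$, which proves the claim.

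I do not expect a genuine obstacle here: the only point requiring care is the pointwise inequality on $A$, and, correspondingly, checking that the analogue of Lemma \ref{lemma:properties_of_distance}(iii) controls $\|[u-\beta]^+\|_{L^s}$ by $\dist(u,\mathcal{C}^\beta)$. The latter follows exactly as in part (iii), since $u-w\ge u-\beta$ for every $w\in\mathcal{C}^\beta$ and hence $[u-\beta]^+\le[u-w]^+$. Everything else is a routine application of the Sobolev embedding and the finiteness of $|\Omega|$.
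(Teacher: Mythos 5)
Your proof is correct, but it follows a genuinely different route from the paper's. The paper argues by contradiction and compactness: assuming $u_\ep\in\mathcal{C}_{\alpha,\ep}\cap\mathcal{C}^{\beta,\ep}$ for every $\ep>0$, it notes that $\|\pi_\alpha(u_\ep)-\pi^\beta(u_\ep)\|<2\ep\to0$, extracts an a.e.\ convergent subsequence, and contradicts the pointwise lower bound $\pi_\alpha(u_\ep)-\pi^\beta(u_\ep)\geq\alpha-\beta>0$ on the set where $\alpha>\beta$; the uniformity in $\ep$ then comes for free from the monotonicity of $\ep\mapsto\mathcal{C}_{\alpha,\ep}$. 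You instead prove a direct quantitative estimate: the elementary pointwise inequality $[u-\alpha]^-+[u-\beta]^+\geq\alpha-\beta$ on $\{\alpha>\beta\}$, integrated and combined with Lemma \ref{lemma:properties_of_distance}(iii) (and its straightforward analogue for $\mathcal{C}^\beta$, which you correctly verify via $[u-\beta]^+\leq[u-w]^+$ for $w\in\mathcal{C}^\beta$), yields $\delta\leq C\ep$ whenever the intersection is nonempty. Your argument avoids the projections $\pi_\alpha,\pi^\beta$ and the passage to an a.e.\ convergent subsequence entirely, and it buys an explicit threshold $\bar\ep=\delta/C$ in terms of $\int_{\{\alpha>\beta\}}(\alpha-\beta)\,dx$, $|\Omega|$ and the Sobolev constant, whereas the paper's softer argument gives no such rate. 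Both proofs ultimately rest on the same two facts — the Sobolev control of the negative/positive parts by the distance to the cones, and the positivity of $\alpha-\beta$ on a set of positive measure — so neither is substantially longer; yours is the more informative of the two.
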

\begin{proof}
Assume by contradiction that for all $\epsilon>0$ there exists $u_\epsilon \in
\mathcal{C}_{\alpha,\epsilon}\cap \mathcal{C}^{\beta,\epsilon}$. Then on one hand
$\|\pi_\alpha(u_\epsilon)-\pi^\beta(u_\epsilon)\|<2\epsilon$, so that $\pi_\alpha(u_\epsilon)-\pi^\beta(u_\epsilon)\to0$ in $W_0^{1,p}(\Omega)$ as $\epsilon\to0$.
Hence, up to a subsequence, we have that, as $\epsilon\to0$,
\[
\pi_\alpha(u_\epsilon)-\pi^\beta(u_\epsilon)\to0 \text{ a.e. in } \Omega.
\]
On the other hand, $\pi_\alpha(u_\ep)\geq \alpha$ and $\pi^{\beta}(u_\ep)\leq \beta$,
thus
\[
\pi_\alpha(u_\ep)-\pi^{\beta}(u_\ep)\geq \alpha-\beta>0\quad \mbox{ on a set of positive measure, }
\]
which is a contradiction to the above relation.
\end{proof}
\begin{lemma}\label{lemma:deformation lemma}
Assume that $B\subseteq A\subseteq W_0^{1,p}(\Omega)$ are bounded closed sets with the property that $d(B,A^c)>0$ and that $inf_{u\in A}\|u-K(u)\|>\delta$ for some $\delta >0$.
Then there exist $\rho>0$ and a map $\eta:\RR^+\times W_0^{1,p}(\Omega)\ri W_0^{1,p}(\Omega)$ such that:
\begin{itemize}
\item[(i)] $\eta(0,u)=u$ for every $u\in W_0^{1,p}(\Omega)$ and $\eta(t,u)=u$ for every $u\in A^c$ and $t\in\RR^+$;
\item[(ii)] $J(\eta(\cdot,u))$ is nonincreasing for every $u\in W_0^{1,p}(\Omega)$;
\item[(iii)] $J(\eta(t,u)) \leq J(\eta(s,u))- \rho(t-s)$ if $\eta(r,u)\in B$, for every $r\in [s,t], \, 0\leq s<t$;
\item[(iv)] $\eta(t,{\mathcal C}_{\alpha,\ep}\cap A)\subset {\mathcal C}_{\alpha,\ep}$, for every $t\in \RR^+$, $0\leq\ep\leq\ep_\alpha$;
\item[(v)] $\eta(t,{\mathcal C}^{\beta,\ep}\cap A)\subset {\mathcal C}^{\beta,\ep}$, for every $t\in \RR^+$, $0\leq\ep\leq\ep_\alpha$;
\end{itemize}
where $\ep_\alpha, \ep_\beta$ are given in Definition \ref{def:locally_K_invariance}.
\end{lemma}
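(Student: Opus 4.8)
The plan is to realize $\eta$ as the flow of a cut-off pseudogradient field and to read off (i)--(v) from the properties of that field. Let $\tilde K$ be the locally Lipschitz pseudogradient of Proposition \ref{prop:lipschitz_pseudogradient}, chosen so that \emph{both} $\mathcal C_\alpha$ and $\mathcal C^\beta$ are locally $\tilde K$-invariant, and recall that it inherits the estimates of Lemmas \ref{lemma:K_pseudogradient_first_estimate} and \ref{lemma:K_pseudogradient_second_estimate}. A preliminary point is that $\|u-\tilde K(u)\|$ is bounded \emph{above and below} on the bounded set $A$: the upper bound is Remark \ref{rem:bounded norm}, while the lower bound follows by feeding the hypothesis $\inf_{u\in A}\|u-K(u)\|>\delta$ into Lemma \ref{lemma:K_pseudogradient_first_estimate} and the duality estimate (which keep $\|J'(u)\|_{W^{-1,p'}(\Omega)}$ bounded below on $A$) and then into Lemma \ref{lemma:K_pseudogradient_second_estimate} for $\tilde K$; in particular $A$ contains no critical point. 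I then introduce the globally Lipschitz cut-off
\[
\chi(u)=\frac{\dist(u,A^c)}{\dist(u,A^c)+\dist(u,B)},
\]
whose denominator is bounded below by $d(B,A^c)>0$, so that $0\le\chi\le1$, $\chi\equiv1$ on $B$ and $\chi\equiv0$ on $A^c$, and I set $V(u)=\chi(u)\bigl(u-\tilde K(u)\bigr)$, extended by $0$ off $A$. Since every critical point lies in the open set $A^c$, where $\chi$ vanishes identically, and $\tilde K$ is locally Lipschitz elsewhere, $V$ is locally Lipschitz and bounded on all of $W_0^{1,p}(\Omega)$ with support in $A$.

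Next I define $\eta$ as the unique solution of $\partial_t\eta=-V(\eta)$, $\eta(0,u)=u$; by Cauchy--Lipschitz together with the boundedness of $V$ it is globally defined on $\RR^+\times W_0^{1,p}(\Omega)$. Property (i) is immediate, since $V\equiv0$ on $A^c$ forces the constant trajectory there; because $V$ also vanishes on $\partial A$, orbits started in $A$ remain in the bounded set $A$. Differentiating gives $\tfrac{d}{dt}J(\eta)=-\chi(\eta)\,J'(\eta)[\eta-\tilde K(\eta)]\le0$ by the nonnegativity in Lemma \ref{lemma:K_pseudogradient_first_estimate}, which is (ii); on $B$, where $\chi\equiv1$, the two-sided bounds on $\|\eta-\tilde K(\eta)\|$ and Lemma \ref{lemma:K_pseudogradient_first_estimate} make $J'(\eta)[\eta-\tilde K(\eta)]\ge\rho$ for some $\rho>0$, and integrating over any subinterval on which the orbit stays in $B$ yields (iii).

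The core of the argument is the cone invariance (iv)--(v). I fix $u\in\mathcal C_{\alpha,\ep}\cap A$ with $\ep\le\ep_\alpha$ and set $\phi(t)=\dist(\eta(t,u),\mathcal C_\alpha)$; as $\eta(t,u)\in A$ throughout, the local $\tilde K$-invariance of Definition \ref{def:locally_K_invariance} (with bounded set $\mathcal U=A$) gives the contraction $\dist(\tilde K(\eta(t,u)),\mathcal C_\alpha)\le\tfrac12\phi(t)$ whenever $\phi(t)<\ep_\alpha$. Writing $\eta(t+h,u)=(1-h\chi)\,\eta(t,u)+h\chi\,\tilde K(\eta(t,u))+o(h)$ and using that $v\mapsto\dist(v,\mathcal C_\alpha)$ is convex and $1$-Lipschitz, convexity yields
\[
\phi(t+h)\le(1-h\chi)\phi(t)+h\chi\,\tfrac12\phi(t)+o(h),
\]
so the upper Dini derivative obeys $D^+\phi(t)\le-\tfrac12\chi(\eta(t,u))\,\phi(t)\le0$. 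A continuation argument (the set of times with $\phi<\ep_\alpha$ is nonempty at $t=0$ and, by the inequality just shown, invariant under the flow) then forces $\phi$ to be nonincreasing, whence $\phi(t)\le\phi(0)<\ep$ and $\eta(t,u)\in\mathcal C_{\alpha,\ep}$. This gives (iv); (v) is identical, using the $\tilde K$-invariance of $\mathcal C^\beta$.

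The delicate step is this last one. For $p=2$ one would differentiate $\tfrac12\dist^2$ and invoke an inner-product inequality, but since for $p\ne2$ the norm of $W_0^{1,p}(\Omega)$ is not Hilbertian, I instead convert the invariance of $\tilde K$ into a differential inequality for $\phi$ exploiting \emph{only} the convexity and Lipschitz continuity of $\dist(\cdot,\mathcal C_\alpha)$ and one-sided Dini derivatives. The second point demanding care is the two-sided control of $\|u-\tilde K(u)\|$ on $A$ in the first step, on which both the decay rate $\rho$ and the very applicability of the local invariance ultimately rest.
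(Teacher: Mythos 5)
Your proof is correct and follows essentially the same route as the paper's: the flow of a cut-off locally Lipschitz pseudogradient field, energy decay from Lemma~\ref{lemma:K_pseudogradient_first_estimate} giving (ii)--(iii), and cone invariance from the first-order expansion $\eta(t+h)=(1-h\chi)\eta+h\chi\tilde K(\eta)+o(h)$ combined with convexity of $\dist(\cdot,{\mathcal C}_\alpha)$ and the local $K$-invariance. The only differences are refinements of execution rather than of strategy: you use the unnormalized field where the paper normalizes by $\|\eta-K(\eta)\|$, and you make explicit two points the paper leaves implicit, namely the transfer of the lower bound $\|u-K(u)\|>\delta$ from $K$ to $\tilde K$ via Lemmas~\ref{lemma:K_pseudogradient_first_estimate} and~\ref{lemma:K_pseudogradient_second_estimate}, and the continuation argument (via Dini derivatives) that upgrades the infinitesimal decrease of $\dist(\eta(t,u),{\mathcal C}_\alpha)$ to invariance for all $t$, where the paper simply invokes the semigroup property.
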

\begin{proof}
In the following, with an abuse of notation, we will denote by $K$ the locally Lipschitz continuous operator provided by Bartsch, Liu and Weth (see Proposition \ref{prop:lipschitz_pseudogradient}).
Take $\chi :W_0^{1,p}(\Omega)\ri [0,1]$ a regular cutoff function such that $\chi (u)=1$ if $u\in B$ and $\chi (u)=0$ if $u\in A^c$. For $u\in W_0^{1,p}(\Omega)$ we consider the following Cauchy problem:
\[
\left\{\begin{array}{ll}
\frac{d}{dt}\eta(t,u)=-\chi(\eta(t,u))\di\frac{\eta(t,u)-K(\eta(t,u))}{\|\eta(t,u)-K(\eta(t,u))\|},\\
\eta(0,u)=u.
\end{array}\right.
\]
Due to the regularity of $K$, there exists a unique solution of the above Cauchy problem, defined for every $t\in\RR^+$.

Property (i) is obvious, let us check properties (ii) and (iii). For $0\leq s<t$ we have
\begin{eqnarray*}
J(\eta(t,u))&-&J(\eta(s,u))=\int_s^t\frac{d}{dr}J(\eta(r,u))\, dr =
\int_s^tJ'(\eta(r,u))\left[\frac{d}{dr}\eta(r,u)\right]\, dr\\
&=&-\int_s^t \di\frac{\chi(\eta(r,u))}{\|\eta(r,u)-K(\eta(r,u))\|}J'(\eta(r,u))\left[\eta(r,u)-K(\eta(r,u))\right]\, dr.
\end{eqnarray*}
By Proposition \ref{prop:lipschitz_pseudogradient}, $K$ satisfies the inequalities in Lemma \ref{lemma:K_pseudogradient_first_estimate}, hence there exists $C>0$ such that
\[
J(\eta(t,u))-J(\eta(s,u))\leq - C \int_s^t \frac{\chi(\eta(r,u)) \|\eta(r,u)-K(\eta(r,u))\|}{\left(\|\eta(r,u)\|+\|K(\eta(r,u))\|\right)^{2-p}}\, dr \quad \text{for } p<2,
\]
and
\[
J(\eta(t,u))-J(\eta(s,u))\leq - C \int_s^t \chi(\eta(r,u)) \|\eta(r,u)-K(\eta(r,u))\| ^{p-1} \, dr \quad  \text{for } p\geq 2
\]
and hence (ii) is proved. Moreover, when $\eta(r,u)\in B $ for all $r\in [s,t]$, we have that $\chi(\eta(r,u))\equiv 1$ and that $\|\eta(r,u)-K(\eta(r,u))\|\geq \delta$, whereas $\|\eta(r,u)\|+\|K(\eta(r,u))\|$ is bounded, thus (iii) also holds.

Passing to the proof of (iv), we take $u\in {\mathcal C}_{\alpha,\ep}\cap A$ with $\ep\leq\ep_\alpha$ so that
\begin{equation}\label{eq:auxiliary_eq_invariance}
K({\mathcal C}_{\alpha,\ep}\cap A)\subseteq {\mathcal C}_{\alpha,\ep/2}.
\end{equation}
We have
\[
\eta(t,u)=u+t\frac{d}{dt}\eta(t,u)_{\mid_{t=0}} +o(t)=u-t\chi(u)\di\frac{u-K(u)}{\|u-K(u)\|}+o(t).
\]
We denote
$$\lambda=\di\frac{\chi(u)}{\|u-K(u)\|}$$
and we arrive at
$$\eta(t,u)=(1-t\lambda)u+t\lambda K(u)+o(t).$$
So, by \eqref{eq:auxiliary_eq_invariance} we have
\begin{eqnarray*}
\|\eta(t,u)-\pi_\alpha(\eta(t,u))\|&\leq&(1-t\lambda)\|u-\pi_\alpha(u)\|+t\lambda\|K(u)-\pi_\alpha(K(u))\|+o(t)\\
&\leq& (1-t\lambda)\ep+t\lambda\frac{\ep}{2}+o(t)=\left(1-\frac{t\lambda}{2}\right)\ep+o(t)\\
&<& \ep \quad\mbox{when }t\ri 0.
\end{eqnarray*}
Since $\eta$ has semigroup properties we deduce that $\eta(t,u)\in {\mathcal C}_{\alpha,\ep}$ for all $t\in\RR^+$.

The proof of property (v) is analogous to the proof of (iv) and it is omitted for brevity.
\end{proof}
%
%
\begin{remark}\label{remark:backward invariance}
Of course, Lemma \ref{lemma:deformation lemma} gives us a backward invariance with respect to time of the complementary of some small enlargements of the cones. More precisely, if $0<t<s$ and $\eta (s,u)\in W_0^{1,p}(\Omega)\setminus {\mathcal C}_{\alpha,\ep}$ for some $0\leq \ep\leq\ep_\alpha$, then $\eta(t,u)\in W_0^{1,p}(\Omega)\setminus {\mathcal C}_{\alpha,\ep}$. Indeed, if $\eta(t,u)\in {\mathcal C}_{\alpha,\ep}\cap A$ then $\eta(s,u)\in {\mathcal C}_{\alpha,\ep}$, whereas if $\eta(t,u)\in {\mathcal C}_{\alpha,\ep}\setminus A$ then $\eta(s,u)=u \in {\mathcal C}_{\alpha,\ep}$. Similarly, if $0<t<s$ and $\eta (s,u)\in W_0^{1,p}(\Omega)\setminus {\mathcal C}^{\beta,\ep}$ for some $0\leq \ep\leq\ep_\beta$, then $\eta(t,u)\in W_0^{1,p}(\Omega)\setminus {\mathcal C}^{\beta,\ep}$.
\end{remark}
\begin{lemma}\label{lemma: large black cones}
Given $\mathcal{U}$ bounded, let $\ep_\alpha,\ep_\beta$ be as in the Definition \ref{def:locally_K_invariance} of locally $K$-invariance, and let
$0<\ep<\min\{\ep_\alpha,\,\ep_\beta\}$.
If either $u\in ({\mathcal C}_{\alpha,\ep}\cap \mathcal{U})\setminus {\mathcal C}_{\alpha,\ep/2}$ or $u\in ({\mathcal C}^{\beta,\ep}\cap \mathcal{U})\setminus {\mathcal C}^{\beta,\ep/2}$, then $\|u-K(u)\|\geq\frac{\ep}{4}$.
\end{lemma}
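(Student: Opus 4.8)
The plan is to convert the two set-membership hypotheses into a two-sided estimate on $\dist(u,\mathcal{C}_\alpha)$ and then to exploit local $K$-invariance in the sharp form that $K$ roughly halves the distance to the cone. I would treat only the subsolution alternative $u\in({\mathcal C}_{\alpha,\ep}\cap\mathcal{U})\setminus{\mathcal C}_{\alpha,\ep/2}$; the supersolution case is entirely symmetric, using $\ep_\beta$ in place of $\ep_\alpha$ (here $0<\ep<\min\{\ep_\alpha,\ep_\beta\}$ guarantees both invariances are available). Writing $d:=\dist(u,\mathcal{C}_\alpha)$, the hypothesis $u\in{\mathcal C}_{\alpha,\ep}$ reads $d<\ep$, while $u\notin{\mathcal C}_{\alpha,\ep/2}$ reads $d\geq\ep/2$; so the two conditions together say precisely $\ep/2\leq d<\ep$.

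The key step, and the one I would spend the most care on, is to prove the contraction estimate $\dist(K(u),\mathcal{C}_\alpha)\leq d/2$. Applying invariance naively at radius $\ep$ only gives $K(u)\in{\mathcal C}_{\alpha,\ep/2}$, i.e. $\dist(K(u),\mathcal{C}_\alpha)<\ep/2$, which combined with $d\geq\ep/2$ yields nothing. The remedy is to apply local $K$-invariance at the \emph{sharp} radius rather than at $\ep$: for every $\ep'$ with $d<\ep'<\ep_\alpha$ (such $\ep'$ exist since $d<\ep<\ep_\alpha$), we have $u\in{\mathcal C}_{\alpha,\ep'}\cap\mathcal{U}$, whence Definition \ref{def:locally_K_invariance} gives $K(u)\in{\mathcal C}_{\alpha,\ep'/2}$, that is $\dist(K(u),\mathcal{C}_\alpha)<\ep'/2$. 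Letting $\ep'\downarrow d$ then yields $\dist(K(u),\mathcal{C}_\alpha)\leq d/2$.

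Finally I would close the argument with the elementary fact that $u\mapsto\dist(u,\mathcal{C}_\alpha)$ is $1$-Lipschitz with respect to $\|\cdot\|$ (the standard estimate for the distance to a nonempty set). This gives
\[
d=\dist(u,\mathcal{C}_\alpha)\leq \dist(K(u),\mathcal{C}_\alpha)+\|u-K(u)\|\leq \frac{d}{2}+\|u-K(u)\|,
\]
so that $\|u-K(u)\|\geq d/2\geq\ep/4$, which is the claim. Thus the only genuine obstacle is the contraction step above; once the halving of the distance is measured against the true distance $d$ instead of the fixed radius $\ep$, the triangle inequality does the rest.
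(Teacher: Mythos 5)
Your argument is correct and is essentially the paper's own proof: the paper likewise identifies the sharp radius $r=\dist(u,\mathcal{C}_\alpha)\in(\ep/2,\ep)$ (phrased as $u\in\partial\mathcal{C}_{\alpha,r}$), applies local $K$-invariance at that radius to get $\dist(K(u),\mathcal{C}_\alpha)\leq r/2$, and concludes via the same triangle inequality $\|u-\pi_\alpha(u)\|\leq\|u-K(u)\|+\|K(u)-\pi_\alpha(K(u))\|$. Your limiting argument $\ep'\downarrow d$ is in fact slightly more careful than the paper's direct application of the invariance to a boundary point of the open set $\mathcal{C}_{\alpha,r}$.
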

\begin{proof}
Let $u\in ({\mathcal C}_{\alpha,\ep}\cap \mathcal{U})\setminus {\mathcal C}_{\alpha,\ep/2}$. Then there exists $\ep/2<r<\ep$ such that $u\in \partial {\mathcal C}_{\alpha,r}\cap \mathcal{U}$ and we deduce from the definition of locally $K$-invariance that $K(u)\in {\mathcal C}_{\alpha,r/2}$. Since
\[
\|u-\pi_\alpha(u)\|\leq \|u-\pi_\alpha(K(u))\|\leq \|u-K(u)\|+\|K(u)-\pi_\alpha(K(u))\|,
\]
then
\[
\|u-K(u)\|\geq \|u-\pi_\alpha(u)\| - \|K(u)-\pi_\alpha(K(u))\| \geq r-\frac{r}{2}.
\]
It follows that $\|u-K(u)\|\geq \frac{r}{2}\geq \frac{\ep}{4}$. Obviously, one can proceed similarly if $u\in ({\mathcal C}^{\beta,\ep}\cap \mathcal{U})\setminus {\mathcal C}^{\beta,\ep/2}$.
\end{proof}
\begin{proof}[Proof of Theorem \ref{theorem:not ordered}]
Theorem \ref{theorem:order} provides the existence of $u_1$ and $u_2$. Let us turn our attention to finding $u_3$.
We will use a mountain pass strategy. To this aim we set $\bar\epsilon$ such that
\[
{\mathcal C}_{\alpha_2,\ep}\cap {\mathcal C}^{\beta_1,\ep}=\emptyset\quad \mbox{for all }0\leq\ep\leq\bar\ep,
\]
which exists by Lemma \ref{lemma: cones disjoint}, and
\[
\Gamma=\{\gamma\in C([0,1], {\mathcal C}_{\alpha_1,\bar\ep/2}\cap {\mathcal C}^{\beta_2,\bar\ep/2}):
\quad \gamma(0)\in {\mathcal C}^{\beta_1,\bar\ep}\mbox{ and }\gamma(1)\in {\mathcal C}_{\alpha_2,\bar\ep}\},
\]
\[
S_\gamma=\{s\in [0,1]: \gamma(s)\in \overline{({\mathcal C}_{\alpha_1,\bar\ep/2}\cap {\mathcal C}^{\beta_2,\bar\ep/2})
\setminus ({\mathcal C}_{\alpha_2,\bar\ep}\cup {\mathcal C}^{\beta_1,\bar\ep} )}\},
\]
\[
c=\inf_{\gamma\in\Gamma}\max_{s\in S_\gamma}J(\gamma(s)).
\]
Note that $c\geq \inf_{{\mathcal C}_{\alpha_1,\bar\ep}\cap {\mathcal C}^{\beta_2,\bar\ep}} J>-\infty$
by Lemma \ref{lemma:boundedness} (ii). Now, let us prove that there exists a sequence $(u_n)_n\in ({\mathcal C}_{\alpha_1,\bar\ep/2}\cap {\mathcal C}^{\beta_2,\bar\ep/2})
\setminus ({\mathcal C}_{\alpha_2,\bar\ep}\cup {\mathcal C}^{\beta_1,\bar\ep} )$ such that $J(u_n)\ri c$ and $\|J'(u_n)\|_{W^{-1,p'}(\Omega)}\ri 0$. Then the result will follow by the Palais-Smale property proved in Lemma \ref{lemma:boundedness} (v).
Assume by contradiction that there exists $\delta_1>0$ such that
\begin{equation}\label{eq: contradiction mountain pass}\begin{split}
u\in ({\mathcal C}_{\alpha_1,\bar\ep/2}\cap {\mathcal C}^{\beta_2,\bar\ep/2})
&\setminus ({\mathcal C}_{\alpha_2,\bar\ep}\cup {\mathcal C}^{\beta_1,\bar\ep} ),\,\,
 c-\delta_1\leq J(u)\leq c+\delta_1\\ 
 &\mbox{ implies }\quad\|J'(u)\|_{W^{-1,p'}(\Omega)}\geq \delta_1.
\end{split}\end{equation}
By Lemma \ref{lemma:boundedness} (iv), we can find a $\delta_2>0$ such that
$\|u-K(u)\|\geq\delta_2$ for $u\in ({\mathcal C}_{\alpha_1,\bar\ep/2}\cap {\mathcal C}^{\beta_2,\bar\ep/2})
\setminus ({\mathcal C}_{\alpha_2,\bar\ep}\cup {\mathcal C}^{\beta_1,\bar\ep} )$. Then by Lemma \ref{lemma: large black cones}, there exists $\delta>0$ such that
\begin{equation*}\label{eq: }
u\in ({\mathcal C}_{\alpha_1,\bar\ep}\cap {\mathcal C}^{\beta_2,\bar\ep})
\setminus ({\mathcal C}_{\alpha_2,\bar\ep/2}\cup {\mathcal C}^{\beta_1,\bar\ep/2} ),\,\,
 c-\delta\leq J(u)\leq c+\delta\quad\mbox{ implies }\quad\|u-K(u)\|\geq \delta.
\end{equation*}
Given this $\delta$ and
\[
A=\left[(\overline{ {\mathcal C}_{\alpha_1,\bar\ep}\cap {\mathcal C}^{\beta_2,\bar\ep}})
\setminus ({\mathcal C}_{\alpha_2,\bar\ep/2}\cup {\mathcal C}^{\beta_1,\bar\ep/2} )\right] \cap\{u\in W_0^{1,p}(\Omega):\ c-\delta\leq J(u)\leq c+\delta\},
\]
\[
B=\left[(\overline{{\mathcal C}_{\alpha_1,\bar\ep/2}\cap {\mathcal C}^{\beta_2,\bar\ep/2}})
\setminus ({\mathcal C}_{\alpha_2,\bar\ep}\cup {\mathcal C}^{\beta_1,\bar\ep} )\right] \cap\left\{u\in W_0^{1,p}(\Omega):\ c-\frac{\delta}{2}\leq J(u)\leq c+\frac{\delta}{2}\right\},
\]
let $\eta$ be the deformation found in Lemma \ref{lemma:deformation lemma}. Notice that $A$ is bounded because of Lemma \ref{lemma:boundedness} (iii). Hence, by eventually choosing a smaller $\bar\ep$, we deduce from the properties (iv) and (v) of Lemma \ref{lemma:deformation lemma} that
\begin{equation}\label{eq: invariance all cones}
\eta(t,{\mathcal C}_{\alpha_i, \ep}\cap A)\subset {\mathcal C}_{\alpha_i, \ep}\quad\mbox{and}\quad\eta(t,{\mathcal C}^{\beta_i, \ep}\cap A)\subset {\mathcal C}^{\beta_i, \ep},
\end{equation}
for all $0\leq\ep\leq\bar\ep$, $t\in\RR^+$ and $i=1,2$.
Moreover, let $\rho >0$ be the quantity defined therein in property (iii). It is clear that, without loss of generality, we can choose $\rho$ such that $\rho\leq \delta$.
In the following we will denote by $\gamma $  an almost optimal path in $ \Gamma$, in the sense that
\begin{equation}\label{eq: optimal path}
\max_{s\in S_\gamma}J(\gamma(s))\leq c+\frac{\rho}{2}.
\end{equation}

Let $\bar\gamma(s)=\eta (1,\gamma(s))$.
We can see immediately that $\bar\gamma\in\Gamma$ due to relation \eqref{eq: invariance all cones} and to Lemma \ref{lemma:deformation lemma} (i). By the definition of $c$ we can find $\bar s\in (0,1)$ such that
\begin{equation}\label{eq: providing contradiction}
\bar s\in S_{\bar\gamma}\quad\mbox{and}\quad J(\bar\gamma (\bar s))\geq c.
\end{equation}
By Remark \ref{remark:backward invariance}, since $\bar s\in S_{\bar\gamma}$, then $\bar s\in S_{\gamma}$. Therefore relation \eqref{eq: optimal path}, the fact that we have chosen $\rho\leq\delta$ and the decreasing property of the flux provide
\begin{equation}\label{eq: inequalities level c}
c+\frac{\delta}{2}\geq J(\gamma(\bar s))\geq J(\eta(t, \gamma(\bar s)))\geq J(\eta(1, \gamma(\bar s)))=J(\bar\gamma(\bar s))\geq c,
\end{equation}
for every $0\leq t\leq 1$. Consequently, $\bar\gamma(\bar s)\in B$. Then, by Remark \ref{remark:backward invariance} and relation \eqref{eq: inequalities level c} we deduce that
\[
\eta (t,\gamma(\bar s))\in B \quad\mbox{for every }0\leq t\leq 1.
\]
Hence, property (iii) of Lemma \ref{lemma:deformation lemma} applies, thus providing
\[
J(\bar\gamma(\bar s))=J(\eta (1,\gamma(\bar s)))\leq J(\gamma (\bar s))-\rho.
\]
Therefore, using again relation \eqref{eq: optimal path}, we obtain
\[
J(\bar\gamma(\bar s))\leq c-\frac{\rho}{2},
\]
which contradicts relation \eqref{eq: providing contradiction}. In conclusion, we showed that \eqref{eq: contradiction mountain pass} can not hold, so that there exists a Palais-Smale sequence for $J$ at level $c$ contained in $B$. Finally, Lemma \ref{lemma:boundedness} (v) provides the existence of a solution $u_3$ to problem \eqref{eq:main_equation}. By construction $u_3\in B$, then Lemma \ref{lemma: large black cones} ensures that $u_3\in ({\mathcal C}_{\alpha_1}\cap {\mathcal C}^{\beta_2})\setminus({\mathcal C}_{\alpha_2}\cap {\mathcal C}^{\beta_1})$.
\end{proof}

\begin{remark}
Although $(f2)$ was given in its form for the homogeneity of the paper, one can see that in all the previous proofs we actually used
\begin{itemize}
\item[$(\tilde{f2})$] $f\in C(\overline{\Omega}\times\RR)$.
\end{itemize}
Additional hypotheses were included in $(f2)$ only because they are relevant for our further discussion.
\end{remark}

\section{Existence of $K$-invariant open sets}\label{section: K invariance}

Everywhere in this section we work under the hypotheses of Theorem \ref{theorem:invariance_of_the_cone}, which will be proved in several steps. In order to prove the existence of $K$-invariant open sets, it will be enough to show that $\dist(K(u),{\mathcal C}_{\alpha})=o(\dist(u,{\mathcal C}_{\alpha}))$ as $\dist(u,{\mathcal C}_{\alpha})\to0$. By property (ii) of the Lemma \ref{lemma:properties_of_distance}, we can focus on providing an upper bound to $\|[K(u)-\alpha]^-\|$. The case $p=2$ was treated in \cite{ContiMerizziTerracini1999} and we are now going to generalize it.
\begin{lemma}\label{lemma:invariance_of_cone_first_estimate}
Let $f$ satisfy $(f1)-(f3)$ and let $\alpha$ be a strict subsolution for \eqref{eq:main_equation}. Then the following holds for every $u\in W_0^{1,p}(\Omega)$
\begin{itemize}
\item[(i)] if $1<p<2$ then $\|[K(u)-\alpha]^-\|\leq C (\|K(u)\|+\|\alpha\|)^{2-p}\|[h(\cdot,\alpha+u-\pi_\alpha(u))-h(\cdot,\alpha)+a]^-\|_{L^{(p^\star)'}(\Omega)}$, where $C>0$ is a constant;
\item[(ii)] if $p\geq 2$ then $\|[K(u)-\alpha]^-\|^{p-1}\leq C \|[h(\cdot,\alpha+u-\pi_\alpha(u))-h(\cdot,\alpha)+a]^-\|_{L^{s'}(\Omega)}$, where $s=p^\star$ if $p\neq N$ and $1<s<p^\star$ if $p=N$, and $C>0$ is a constant which depends only on $s$.
\end{itemize}
\end{lemma}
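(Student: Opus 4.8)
The plan is to compare $K(u)$ with $\alpha$ by testing the difference of their equations against the negative part $-[K(u)-\alpha]^-$, and then to isolate $\|[K(u)-\alpha]^-\|$ using the monotonicity inequality \eqref{eq:inequality_appearing_several_times} on the left and a H\"older--Sobolev estimate on the right. Write $v=K(u)$, so that, by \eqref{eq:definition_of_K},
\[
\int_\Omega |\nabla v|^{p-2}\nabla v\cdot\nabla\varphi\,dx + M\int_\Omega |v|^{p-2}v\,\varphi\,dx = \int_\Omega h(x,u)\varphi\,dx
\]
for every $\varphi\in W_0^{1,p}(\Omega)$, while the strict subsolution $\alpha$, after adding $M|\alpha|^{p-2}\alpha$ to both sides of its defining equation, satisfies the same identity with right-hand side $h(x,\alpha)-a$. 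Subtracting and choosing $\varphi=-[v-\alpha]^-$ (which belongs to $W_0^{1,p}(\Omega)$ since $v=0$ and $\alpha\le0$ on $\partial\Omega$, so that $v-\alpha\ge0$ there) yields the working identity
\[
\int_\Omega\bigl(|\nabla v|^{p-2}\nabla v-|\nabla\alpha|^{p-2}\nabla\alpha\bigr)\cdot\nabla\varphi\,dx + M\int_\Omega\bigl(|v|^{p-2}v-|\alpha|^{p-2}\alpha\bigr)\varphi\,dx = \int_\Omega\bigl(h(x,u)-h(x,\alpha)+a\bigr)\varphi\,dx.
\]

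For the left-hand side I would first discard the zeroth-order term: since $\varphi=v-\alpha<0$ on $\{v<\alpha\}$ and $t\mapsto|t|^{p-2}t$ is nondecreasing, the integrand $(|v|^{p-2}v-|\alpha|^{p-2}\alpha)\varphi$ is nonnegative, so the whole $M$-term is $\ge0$ and can be dropped. For the gradient term, the key manipulation is to set $V=v$ and $W=\max\{v,\alpha\}=v+[v-\alpha]^-\in W_0^{1,p}(\Omega)$; then $V-W=-[v-\alpha]^-=\varphi$ and $\nabla(V-W)$ vanishes on $\{v\ge\alpha\}$, so the gradient term coincides with $\int_\Omega(|\nabla V|^{p-2}\nabla V-|\nabla W|^{p-2}\nabla W)\cdot\nabla(V-W)\,dx$. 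Applying \eqref{eq:inequality_appearing_several_times} bounds this from below by $C(\|V\|+\|W\|)^{p-2}\|[v-\alpha]^-\|^2$ when $p<2$ and by $C\|[v-\alpha]^-\|^p$ when $p\ge2$; in the former case I would further use $\|W\|^p\le\|v\|^p+\|\alpha\|^p$ to replace $\|V\|+\|W\|$ by $C(\|K(u)\|+\|\alpha\|)$.

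For the right-hand side, the point is to pass from $u$ to $\alpha+u-\pi_\alpha(u)$. Since $\pi_\alpha(u)\ge\alpha$ a.e., we have $\alpha+u-\pi_\alpha(u)\le u$, so $h$ being nondecreasing gives $h(x,\alpha+u-\pi_\alpha(u))\le h(x,u)$; combined with the pointwise bound $-g\le[g]^-$ and $\varphi=-[v-\alpha]^-\le0$, this shows the right-hand side is at most $\int_\Omega[h(x,\alpha+u-\pi_\alpha(u))-h(x,\alpha)+a]^-[v-\alpha]^-\,dx$. I would then estimate this by H\"older with exponents $s'$ and $s$, where $s=p^\star$ (and $1<s<p^\star$ when $p=N$), followed by the Sobolev embedding $\|[v-\alpha]^-\|_{L^s}\le C\|[v-\alpha]^-\|$. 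Combining the two sides and dividing by the appropriate power of $\|[v-\alpha]^-\|$ (which may be assumed nonzero) gives (i) and (ii). The main obstacle is the subquadratic case $p<2$: there the pointwise monotonicity carries the degenerate weight $(|\nabla v|+|\nabla\alpha|)^{p-2}$, so a naive pointwise estimate does not integrate cleanly, and it is precisely the global form \eqref{eq:inequality_appearing_several_times}---accessed through the auxiliary function $W=\max\{v,\alpha\}$ and the verification that it lies in $W_0^{1,p}(\Omega)$---that produces the clean power $\|[v-\alpha]^-\|^2$ together with the factor $(\|K(u)\|+\|\alpha\|)^{2-p}$.
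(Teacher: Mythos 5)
Your proof is correct and follows essentially the same route as the paper's: subtract the two equations, test with $-[K(u)-\alpha]^-$, discard the monotone zeroth-order term, pass from $u$ to $\alpha+u-\pi_\alpha(u)$ via $(f3)$ and $-g\leq[g]^-$, and close with H\"older--Sobolev on the right and \eqref{eq:inequality_appearing_several_times} on the left. Your explicit introduction of $W=\max\{K(u),\alpha\}=K(u)+[K(u)-\alpha]^-\in W_0^{1,p}(\Omega)$ to legitimize the application of \eqref{eq:inequality_appearing_several_times} (and to produce the factor $(\|K(u)\|+\|\alpha\|)^{2-p}$ when $p<2$) merely spells out a step the paper leaves implicit.
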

\begin{proof}
Set for the moment $v=K(u)$. Obviously if $[v-\alpha]^-\equiv 0$ then there is nothing to prove, otherwise notice that
\[
\left\{\begin{array}{ll}
        -(\Delta_p v-\Delta_p\alpha)+M(|v|^{p-2}v-|\alpha|^{p-2}\alpha)=h(x,u)-h(x,\alpha)+a(x) & \quad\text{in } \Omega \\
        v-\alpha\geq 0 &\quad\text{on } \partial\Omega.
       \end{array}\right.
\]
Testing by $-[v-\alpha]^- \in W_0^{1,p}(\Omega)$, recalling that $-f\leq [f]^-$ and that $(|\xi|^{p-2}\xi-|\eta|^{p-2}\eta)\cdot(\xi-\eta)\geq 0$ for every $\eta,\xi\in\RR^N$ (by Lemma \ref{lemma:inequalities_for_vectors}), we obtain
\[
\begin{split}
\int_\Omega (|\nabla v|^{p-2}\nabla v-|\nabla\alpha|^{p-2}\nabla\alpha)&\cdot\nabla(-[v-\alpha]^-)\,dx \\ & \leq -\int_\Omega(h(x,u)-h(x,\alpha)+a(x))[v-\alpha]^-\,dx \\
&\leq \int_\Omega [h(x,u)-h(x,\alpha)+a(x)]^- [v-\alpha]^-\,dx.
\end{split}
\]
At this point notice that, by definition, $\pi_\alpha(u)\geq \alpha$ and hence $u\geq \alpha+u-\pi_\alpha(u)$ a.e. in $\Omega$. By $(f3)$ this implies $[h(x,u)-h(x,\alpha)+a(x)]^-\leq
[h(x,\alpha+u-\pi_\alpha(u))-h(x,\alpha)+a(x)]^-$  for a.e. $x\in\Omega$, so that
\begin{equation}\label{eq:auxiliary_invariance_cones}\begin{split}
\int_\Omega (|\nabla v|^{p-2}\nabla v&-|\nabla\alpha|^{p-2}\nabla\alpha)\cdot\nabla(-[v-\alpha]^-)\,dx\\& \leq
\int_\Omega [h(x,\alpha+u-\pi_\alpha(u))-h(x,\alpha)+a(x)]^- [v-\alpha]^-\,dx.
\end{split}\end{equation}
By applying first the H\"older inequality and then the Sobolev embedding, we have
\begin{equation}\nonumber\begin{split}
\int_\Omega (|\nabla v|^{p-2}\nabla v&-|\nabla\alpha|^{p-2}\nabla\alpha)\cdot\nabla(-[v-\alpha]^-)\,dx\\& \leq C \|[h(\cdot,\alpha+u-\pi_\alpha(u))-h(\cdot,\alpha)+a]^-\|_{L^{s'}(\Omega)} \|[v-\alpha]^-\|,
\end{split}\end{equation}
where $s=p^\star$ if $p\neq N$ and $1<s<p^\star$ if $p=N$.
By \eqref{eq:inequality_appearing_several_times} both cases (i) and (ii) are completed.
\end{proof}

\begin{lemma}\label{lemma:invariance_of_cone_second_estimate_general p < 2}
Let $f$ satisfy $(f1)-(f3)$ and let $\alpha$ be a strict subsolution for \eqref{eq:main_equation}. Assume either $p=2$, or $2N/(N+2)\leq p<2$ (the first inequality being strict for $N=2$) and \eqref{eq:integrability_of_1/a_p<2} holds.
Then
\[
\|[h(\cdot,\alpha+w)-h(\cdot,\alpha)+a]^-\|_{L^{s'}(\Omega)}=o(\|w\|) \quad \text{ as } \|w\|\to0,
\]
where $s=p^\star$ if $p\neq N$ and $1<s<p^\star$ if $p=N$.
\end{lemma}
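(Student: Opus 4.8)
The goal is to bound the negative part of the difference quotient of $h$ in a suitable dual Lebesgue norm by $o(\|w\|)$. The key structural observation is that $h(x,\cdot)$ is nondecreasing by $(f3)$, so $h(x,\alpha+w)-h(x,\alpha)$ has the same sign as $w$ pointwise, and the quantity inside the bracket can only be driven negative where $w<0$ and the positive remainder $a(x)$ fails to compensate the (negative) increment of $h$. Since $a(x)>0$ a.e., the set where $h(x,\alpha+w)-h(x,\alpha)+a(x)<0$ is contained in the set where $w(x)<0$ and $|h(x,\alpha+w)-h(x,\alpha)|>a(x)$.

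My plan is to first treat the trivial case $p=2$, where $s'=(p^\star)'$ and the desired estimate follows directly from the local Lipschitz regularity of $h(x,\cdot)$ in $(f2)$ together with the Sobolev embedding, as already handled in \cite{ContiMerizziTerracini1999}. For the range $2N/(N+2)\leq p<2$, I would proceed as follows. First, use the local $(p-1)$-H\"older continuity of $f(x,\cdot)$ from $(f2)$ (hence of $h(x,\cdot)$) to obtain the pointwise bound $|h(x,\alpha+w)-h(x,\alpha)|\leq C|w|^{p-1}$ on the relevant bounded range of values, so that
\[
[h(\cdot,\alpha+w)-h(\cdot,\alpha)+a]^-\leq [C|w|^{p-1}-a]^- \quad\text{a.e.\ where } w<0.
\]
Next, estimate the $L^{s'}$ norm of the right-hand side by restricting the integration to the set $\Omega_w=\{x:C|w(x)|^{p-1}>a(x)\}$, on which $[C|w|^{p-1}-a]^-\leq C|w|^{p-1}$.

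The central step, and the one where the integrability hypothesis \eqref{eq:integrability_of_1/a_p<2} enters, is to show that $\int_{\Omega_w}|w|^{(p-1)s'}\,dx$ is $o(\|w\|^{s'})$. The idea is to insert the factor $(a/a)$ and apply H\"older with a carefully chosen split of exponents: on $\Omega_w$ one has $1<C|w|^{p-1}/a$, so multiplying by a positive power of this ratio costs nothing but lets me trade powers of $|w|$ (controlled in $L^{p^\star}$ via the Sobolev embedding $W^{1,p}_0\hookrightarrow L^{p^\star}$) against negative powers of $a$ (controlled by \eqref{eq:integrability_of_1/a_p<2}). Concretely, I would write $|w|^{(p-1)s'}=|w|^{p^\star\theta}\bigl(|w|^{p-1}/a\bigr)^{\kappa}a^{\kappa}$ for exponents $\theta,\kappa$ forced by dimensional balancing, bound the middle factor below by $1$ on $\Omega_w$ to drop it, and apply H\"older to the remaining two factors; the exponent on $1/a$ is exactly $\tfrac{2-p}{p-1}\tfrac{p^\star}{p^\star-2}$, matching the hypothesis. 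Since the resulting power of $\|w\|_{L^{p^\star}}$ strictly exceeds $s'$ (this is precisely where the constraint $p\geq 2N/(N+2)$ guarantees the right sign), and since $|w|^{p-1}/a\to\infty$ forces $|\Omega_w|\to 0$ as $\|w\|\to0$, dominated convergence yields the extra vanishing factor that upgrades the bound from $O(\|w\|^{s'})$ to $o(\|w\|^{s'})$.

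The main obstacle will be the bookkeeping of the three H\"older exponents and verifying that the constraint $2N/(N+2)\leq p<2$ is exactly what makes them admissible, i.e.\ that the exponent conjugate to $\tfrac{2-p}{p-1}\tfrac{p^\star}{p^\star-2}$ leaves a power of $|w|$ that is both integrable against the Sobolev norm and strictly superlinear. I expect the endpoint case $p=2N/(N+2)$ to degenerate, which is why the remark after Theorem \ref{theorem:invariance_of_the_cone} interprets \eqref{eq:integrability_of_1/a_p<2} as $1/a\in L^\infty(\Omega)$ there; in that case the argument simplifies since one may bound $1/a$ uniformly and need only the smallness of $|\Omega_w|$ together with the Sobolev embedding.
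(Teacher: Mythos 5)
Your core mechanism coincides with the paper's: on the set where the bracket is negative one has $\tilde c_k|w|^{p-1}\geq a$, hence $|w|^{p-1}\leq \tilde c_k^{1/(p-1)}\,|w|\,a^{-(2-p)/(p-1)}$, and a H\"older inequality with exponents $p^\star/(p^\star)'$ and $(p^\star-1)/(p^\star-2)$ (admissible exactly because $p\geq 2N/(N+2)$ is equivalent to $p^\star\geq 2$) reduces matters to the integrability hypothesis \eqref{eq:integrability_of_1/a_p<2}, with the little-$o$ supplied by dominated convergence. Your reading of the endpoint $p=2N/(N+2)$ is also correct. However, two steps need repair. First, the pointwise bound $|h(x,\alpha+w)-h(x,\alpha)|\leq C|w|^{p-1}$ is only a \emph{local} H\"older estimate: since $\|w\|\to 0$ gives no control on $\|w\|_{L^\infty}$, the region $\{|w(x)|\geq k\}$ cannot be dismissed by appealing to ``the relevant bounded range of values.'' There the nonlinearity may grow like $|t|^{q-1}$ with $q$ as large as anything below $p^\star$, and the paper spends a separate step dominating $\varphi_n$ on that region by $c_k\psi$ or $c_k\psi^{q-1}$ (with $\psi\in W_0^{1,p}(\Omega)$ dominating $|w_n|/\|w_n\|$ along a subsequence), using $(f1)$, $q<p^\star$, and $\|w_n\|^{q-2}\to0$ when $q>2$. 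This piece is absent from your sketch.

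Second, your claim that ``the resulting power of $\|w\|_{L^{p^\star}}$ strictly exceeds $s'$'' is false: the trade $|w|^{p-1}\mapsto |w|\,a^{-(2-p)/(p-1)}$ followed by H\"older produces the power $s'$ \emph{exactly}, so that step alone yields only $O(\|w\|)$, never $o(\|w\|)$. (The constraint $p\geq 2N/(N+2)$ enters through the admissibility of the H\"older exponents, not through any surplus power of $\|w\|$.) The little-$o$ must come entirely from your second mechanism: along an a.e.-convergent subsequence the indicator of $\{\tilde c_k|w_n|^{p-1}>a\}$ tends to $0$ a.e.\ because $a>0$ a.e.\ and $w_n\to0$ a.e.\ (not because ``$|w|^{p-1}/a\to\infty$,'' which is backwards), and the H\"older computation furnishes the $L^1$ dominating function needed for Lebesgue's theorem. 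Since you do invoke this second mechanism, the argument can be salvaged, but the subsequence extraction and the explicit dominating function are where the proof actually lives and should be written out; also note that with the convention $v^-=\max\{-v,0\}$ your displayed bound should read $[a-C|w|^{p-1}]^-$ rather than $[C|w|^{p-1}-a]^-$.
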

\begin{proof}
Our goal is to show that from any sequence $(w_n)_n\subset W_0^{1,p}(\Omega)$ with $\|w_n\|\ri 0$ as $n\ri\infty$, we can extract a subsequence, still denoted by $(w_n)_n$, such that
\begin{equation}\label{ee9}
\lim_{n\ri\infty}\int_{\Omega}\left(\frac{[h(x,\alpha + w_n)-h(x,\alpha)+a(x)]^-}
{\|w_n\|}\right)^{s'}\,dx=0.
\end{equation}
We denote by $\Omega_0\subset\Omega$ the set
\begin{equation}\label{eq: Omega0}
\Omega_0=\{x\in\Omega:\ w_n(x)\ri 0 \mbox{ and }a(x)>0\}.
\end{equation}
Note that $|\Omega\setminus\Omega_0|=0$. Thus, if $x\in \Omega_0$, by the continuity of $h$, there exists $n_x$ such that
\[
[h(x,\alpha(x) + w_n(x))-h(x,\alpha(x))+a(x)]^-=0\quad\forall n\geq n_x.
\]
Therefore, by defining
\begin{equation}\nonumber
\varphi_n(x)=\frac{[h(x,\alpha(x) + w_n(x))-h(x,\alpha(x))+a(x)]^-}{\|w_n\|},
\end{equation}
we get that $\varphi_n\ri 0$ a.e. in $\Omega$.
In order to apply Lebesgue Theorem in relation \eqref{ee9}, we look for
\begin{equation}\label{ee10}
\varphi\in L^{s'}(\Omega) \quad \text{ such that } \quad \varphi_n(x)\leq\varphi(x) \quad \text{a.e. in } \Omega.
\end{equation}
To this aim, note first that there exists $\psi$ such that
\begin{equation}\label{eq:definition_of_psi}
\frac{|w_n|}{\|w_n\|}\leq \psi, \qquad \psi\in W_0^{1,p}(\Omega).
\end{equation}
Now, by $(f1)$ and $(f3)$ for every $k>0$ there exists $c_k>0$ such that for every $t$ with $|t|\geq k$,
\begin{equation*}
|h(x,\alpha(x)+t)-h(x,\alpha(x))|\leq\left\{\begin{array}{ll}
                          c_k|t| \quad &\text{ if } q\leq 2 \\
              			  c_k|t|^{q-1} \quad & \text{ if } q>2.
                         \end{array}\right.
\end{equation*}
Therefore for $x\in\Omega_0$ with $|w_n(x)|\geq k$ we have
\begin{equation*}
\varphi_n(x)\leq \left\{\begin{array}{ll}
                   c_k \psi \quad &\text{ if } q\leq 2  \\
             	   c_k \psi^{q-1} \quad & \text{ if } q>2,
                         \end{array}\right.
\end{equation*}
for sufficiently large $n$ since $\|w_n\|^{q-2}\to0$ as $n\to+\infty$ if $q>2$.
Given the above $k$, we consider now the situation when $|t|<k$. From $(f2)$ and $(f3)$ we deduce that there exists $\tilde{c}_k>0$ such that
\begin{equation}\label{eq:locally_holder}
|h(x,\alpha(x)+t)-h(x,\alpha(x))|\leq \tilde{c}_k|t|^{p-1} \quad \forall |t|<k.
\end{equation}
In the case $p=2$, this implies that for $x\in\Omega_0$ with $|w_n(x)|< k$ it holds $\varphi_n(x)\leq \tilde{c}_k \psi$, with $\psi$ defined in \eqref{eq:definition_of_psi}. Hence due to the previous results we have the following estimation on $\varphi_n$, for $n$ sufficiently large
\[
\varphi_n\leq\max\{c_k\psi, c_k\psi^{q-1}, \tilde{c}_k \psi\}=:\varphi.
\]
If $N\neq2$ then from the hypothesis $q<p^\star$ we infer that both $(p^\star)'$ and $(q-1)(p^\star)'$ are less than or equal to $p^\star$. Then, since $\psi\in L^{p^\star}(\Omega)$ due to the Sobolev embedding, we have that $\varphi\in L^{(p^\star)'}(\Omega)$. If $p=N=2$, then $\psi \in L^m(\Omega)$ for every $1<m<\infty$, so that \eqref{ee10} is proved also in this case.

In the case $p<2$ we need some additional work. From \eqref{eq:locally_holder} we deduce that for $x\in\Omega_0$ with $|w_n(x)|< k$ the following holds
\begin{equation}\label{eq:auxiliary1}
h(x,\alpha(x)+w_n(x))-h(x,\alpha(x))+a(x)\geq -\tilde{c}_k|w_n(x)|^{p-1}+a(x).
\end{equation}
Note that, if $|w_n(x)|< (a(x)/\tilde{c}_k)^{1/(p-1)}$, then $\varphi_n(x)\equiv0$, hence it only remains to study the situation when
\begin{equation}\label{eq:auxiliary2}
\left(\frac{a(x)}{\tilde{c}_k}\right)^{1/(p-1)}\leq |w_n(x)| \leq k.
\end{equation}
By \eqref{eq:auxiliary1} and \eqref{eq:auxiliary2} we deduce that
\[
[h(x,\alpha(x)+w_n(x))-h(x,\alpha(x))+a(x)]^- \leq \tilde{c}_k |w_n(x)|^{p-1} \leq \tilde{c}_k^{1/(p-1)} \frac{|w_n(x)|}{a(x)^{(2-p)/(p-1)}},
\]
so that, for $p<2$, we have
\begin{equation*}\label{eq: first phi_definition}
\varphi_n\leq\max\{c_k\psi, c_k\psi^{q-1}, \tilde{c}_k^{1/(p-1)} \frac{\psi}{a^{(2-p)/(p-1)}}\}=:\varphi.
\end{equation*}
The integrability of the first two terms follows without difficulties due to the hypotheses. In order to prove that the third term above belongs to $L^{(p^\star)'}(\Omega)$, it is sufficient to apply the H\"older inequality with exponents $p^\star/(p^\star)'$ and $(p^\star-1)/(p^\star-2)$, which are admissible thanks to the condition $p\geq 2N/(N+2)$, and then to use the integrability assumption \eqref{eq:integrability_of_1/a_p<2}.
\end{proof}

\begin{lemma}\label{lemma:invariance_of_cone_second_estimate_general p>2}
Let $p>2$, $f$ satisfy $(f1)-(f3)$ and $\alpha$ be a strict subsolution for \eqref{eq:main_equation}.
\begin{itemize}
\item[(i)] If $2<p<N$ and $1/a\in L^{\frac{(p-2)N}{p}}(\Omega)$, then $\|[h(\cdot,\alpha+w)-h(\cdot,\alpha)+a]^-\|_{L^{(p^\star)'}(\Omega)}=o(\|w\|^{p-1})$ as $\|w\|\to0$.
\item[(ii)] If $p=N$ and $1/a\in L^{r}(\Omega)$ for some $r>p-2$, then $\|[h(\cdot,\alpha+w)-h(\cdot,\alpha)+a]^-\|_{L^{s'}(\Omega)}=o(\|w\|^{p-1})$ as $\|w\|\to0$ for every $1<s'<r/(p-2)$.
\item[(iii)] If $p>N$ and $1/a\in L^{p-2}(\Omega)$, then $\|[h(\cdot,\alpha+w)-h(\cdot,\alpha)+a]^-\|_{L^{1}(\Omega)}=o(\|w\|^{p-1})$ as $\|w\|\to0$.
\end{itemize}
\end{lemma}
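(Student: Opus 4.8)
The plan is to follow the scheme of the previous lemma, the one genuinely new feature being that, for $p>2$, assumption $(f2)$ makes $f(x,\cdot)$ — and therefore $h(x,\cdot)=f(x,\cdot)+M|\cdot|^{p-2}\cdot$ — locally \emph{Lipschitz} rather than locally $(p-1)$-H\"older. Accordingly I would fix a sequence $(w_n)_n\subset W_0^{1,p}(\Omega)$ with $\|w_n\|\to0$, pass to a subsequence so that $w_n\to0$ a.e. and so that a single $\psi\in W_0^{1,p}(\Omega)$ dominates $|w_n|/\|w_n\|$ as in \eqref{eq:definition_of_psi}, and set
\[
\varphi_n(x)=\frac{[h(x,\alpha+w_n)-h(x,\alpha)+a]^-}{\|w_n\|^{p-1}}.
\]
On the full-measure set $\Omega_0$ of \eqref{eq: Omega0}, continuity of $h$ together with $a>0$ gives $\varphi_n\to0$ a.e., so the whole matter reduces to exhibiting a single dominating function $\varphi\in L^{s'}(\Omega)$ and invoking Lebesgue's theorem, exactly as before.

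The estimates split according to the size of $w_n$. For $|w_n(x)|\geq k$, assumptions $(f1)$ and $(f3)$ give $|h(x,\alpha+w_n)-h(x,\alpha)|\leq c_k|w_n|^{\max\{p-1,q-1\}}$; dividing by $\|w_n\|^{p-1}$ and using $\|w_n\|^{q-p}\to0$ when $q>p$, this region is controlled by $c_k\max\{\psi^{p-1},\psi^{q-1}\}$, which lies in $L^{s'}(\Omega)$ since $q<p^\star$. The delicate region is $|w_n(x)|<k$, where $(f2)$ only yields the \emph{linear} bound $|h(x,\alpha+w_n)-h(x,\alpha)|\leq\tilde c_k|w_n|$; because the normalisation is by $\|w_n\|^{p-1}$ with $p-1>1$, the naive estimate $\tilde c_k\psi\|w_n\|^{2-p}$ blows up as $\|w_n\|\to0$. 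This is precisely where the remainder $a$ must be used.

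The key observation I would exploit is that $\varphi_n(x)>0$ forces $h(x,\alpha)-h(x,\alpha+w_n)>a(x)$, hence $\tilde c_k|w_n|>a(x)$, that is $|w_n(x)|>a(x)/\tilde c_k$. Raising this to the negative power $2-p$ lets me trade the factor $|w_n|^{2-p}$ for $(\tilde c_k/a)^{p-2}$, so that on the small region
\[
[h(x,\alpha+w_n)-h(x,\alpha)+a]^-\leq\tilde c_k|w_n|\leq \tilde c_k^{\,p-1}\frac{|w_n|^{p-1}}{a^{p-2}},\qquad\text{whence}\qquad \varphi_n\leq \tilde c_k^{\,p-1}\frac{\psi^{p-1}}{a^{p-2}}.
\]
Thus $\varphi:=\max\{\tilde c_k^{\,p-1}\psi^{p-1}/a^{p-2},\,c_k\psi^{p-1},\,c_k\psi^{q-1}\}$ dominates $\varphi_n$ for $n$ large, and it remains to check $\varphi\in L^{s'}(\Omega)$. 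The pure $\psi$-terms are harmless, since $\psi\in L^{p^\star}$, resp. $L^m$ for every $m<\infty$, resp. $L^\infty$, according to whether $p<N$, $p=N$ or $p>N$; the term $\psi^{p-1}/a^{p-2}$ is handled by H\"older. In case (i) I would split so that $\psi^{(p-1)(p^\star)'}$ is raised to a power making it integrable as $\psi^{p^\star}$, which forces the conjugate exponent on $1/a$ to be exactly $(p-2)(p^\star)'\,\tfrac{p^\star-1}{p^\star-p}=(p-2)N/p$, matching \eqref{eq:integrability_of_1/a}; the same bookkeeping yields the admissible range $1<s'<r/(p-2)$ in case (ii), and reduces case (iii) to $\psi^{p-1}\in L^\infty$ together with $1/a\in L^{p-2}$.

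The main obstacle is exactly the small-$w_n$ region: one has to recognise that the Lipschitz (rather than H\"older) modulus of continuity of $h$ for $p>2$ produces a power mismatch with the $\|w_n\|^{p-1}$ normalisation, and that the sign constraint $h(x,\alpha)-h(x,\alpha+w_n)>a$ is precisely the leverage needed to convert $|w_n|$ into $|w_n|^{p-1}/a^{p-2}$. Once this trade is in place, verifying that the resulting dominating function belongs to $L^{s'}(\Omega)$ is a routine H\"older computation whose exponents reproduce the three integrability conditions of \eqref{eq:integrability_of_1/a}.
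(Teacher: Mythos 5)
Your proposal is correct and follows essentially the same route as the paper: the same splitting into the regions $|w_n|\geq k$ (handled via $(f1)$, $(f3)$) and $|w_n|<k$ (handled via the local Lipschitz bound), the same observation that the negative part vanishes unless $|w_n|\geq a/\tilde c_k$ — which is exactly how one trades $\tilde c_k|w_n|$ for $\tilde c_k^{p-1}|w_n|^{p-1}/a^{p-2}$ — the same dominating function $\varphi=\max\{c_k\psi^{q-1},c_k\psi^{p-1},\tilde c_k^{p-1}\psi^{p-1}/a^{p-2}\}$, and the same H\"older exponents (your $(p-2)(p^\star)'\tfrac{p^\star-1}{p^\star-p}=(p-2)N/p$ matches the paper's computation in case (i)). No gaps.
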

\begin{proof}
In order to treat cases (i) - (iii), we intend to show that from any sequence $(w_n)_n\subset W_0^{1,p}(\Omega)$ with $\|w_n\|\ri 0$ as $n\ri\infty$, we can extract a subsequence, still denoted by $(w_n)_n$, such that
\begin{equation}\nonumber
\lim_{n\ri\infty}\int_{\Omega}\left(\frac{[h(x,\alpha+w_n)-h(x,\alpha)+a]^-}{\|w_n\|^{p-1}}\right)^{s'}\,dx=0,
\end{equation}
with the choice of $s$ depending on the case considered. Following the argumentation from the proof of the previous lemma, we set
\begin{equation}\nonumber
\varphi_n(x)=\frac{[h(x,\alpha(x)+w_n(x))-h(x,\alpha(x))+a(x)]^-}{\|w_n\|^{p-1}}
\end{equation}
and we want to find
\begin{equation}\nonumber
\varphi\in L^{s'}(\Omega) \quad \text{ such that } \quad \varphi_n(x)\leq\varphi(x) \quad \text{a.e. in } \Omega.
\end{equation}
Due to $(f1)$ and $(f3)$, for every $k>0$ there exists $c_k>0$ such that for every $t$ with $|t|\geq k$,
\begin{equation*}
|h(x,t+\alpha(x))-h(x,\alpha(x))|\leq\left\{\begin{array}{ll}
                          c_k|t|^{p-1} \quad\text{ if } q\leq p \\
              c_k|t|^{q-1}  \quad\text{ if } q>p.
                         \end{array}\right.
\end{equation*}
We consider the set $\Omega_0$ introduced by \eqref{eq: Omega0}.
Then, for $\psi$ taken as in \eqref{eq:definition_of_psi} and $x\in\Omega_0$ with $|w_n(x)|\geq k$, we have
\begin{equation*}
\varphi_n(x)\leq \left\{\begin{array}{ll}
                          c_k \psi^{p-1} \quad\text{ if } q\leq p \\
                           \\
              c_k \psi^{q-1}  \quad\text{ if } q>p,
                         \end{array}\right.
\end{equation*}
for sufficiently large $n$ since $\|w_n\|^{q-p}\to0$ as $n\to+\infty$ if $q>p$.
For the above $k$, we discuss now the situation when $|t|<k$.
Using the fact that $h$ is locally Lipschitz, we deduce that there exists $\tilde{c}_k>0$ such that
\begin{equation*}
|h(x,\alpha(x)+t)-h(x,\alpha(x))|\leq \tilde{c}_k|t|,\qquad \forall \ |t|<k.
\end{equation*}
Hence for $x\in\Omega_0$ with $|w_n(x)|< k$ we have
\begin{equation}\label{en3}
h(x,\alpha(x)+w_n(x))-h(x,\alpha(x))+a(x)\geq -\tilde{c}_k|w_n(x)|+a(x).
\end{equation}
Note that, if $|w_n(x)|< a(x)/\tilde{c}_k$, then $\varphi_n(x)\equiv0$, hence it only remains to study the situation when
\begin{equation}\label{en4}
\frac{a(x)}{\tilde{c}_k}\leq |w_n(x)| \leq k.
\end{equation}
By \eqref{en3} and \eqref{en4} we deduce that
\[
[h(x,\alpha(x)+w_n(x))-h(x,\alpha(x))+a(x)]^- \leq \tilde{c}_k |w_n(x)| \leq \tilde{c}_k^{p-1} \frac{|w_n(x)|^{p-1}}{a(x)^{p-2}},
\]
so we have obtained the following estimation on $\varphi_n$, for $n$ sufficiently large
\begin{equation}\label{eq:phi_definition}
\varphi_n\leq\max\{c_k\psi^{q-1}, c_k\psi^{p-1}, \tilde{c}_k^{p-1}\frac{\psi^{p-1}}{a^{p-2}}\}=:\varphi,
\end{equation}
where $\psi$ is defined in \eqref{eq:definition_of_psi}. It only remains to show that $\varphi\in L^{s'}(\Omega)$, with the choice of $s$ depending on the cases (i) - (iii).

(i) Since $\psi\in L^{p^\star}(\Omega)$ we see that the first two terms in \eqref{eq:phi_definition} belong to $L^{(p^\star)'}(\Omega)$. In order to check the integrability of the third term, we apply the H\"older inequality with exponents $(p^\star -1)/(p-1)$ and $(p^\star -1)/(p^\star-p)$ as follows
\[
\int_{\Omega} \left|\frac{\psi^{p-1}}{a^{p-2}}\right|^{(p^\star)'}\,dx \leq \left(\int_\Omega |\psi|^{p^\star}\,dx\right)^{\frac{p-1}{p^\star-1}} \left(\int_\Omega \frac{1}{a^{(p-2)N/p}}\,dx\right)^{\frac{p^\star -p}{p^\star -1}},
\]
so that $\varphi\in L^{(p^\star)'}(\Omega)$ and point (ii) is proved.

(ii) In case $p=N$, the only difference with respect to the previous case is that, due to Sobolev embeddings, $\psi\in L^{m}$ for every $1<m<+\infty$. To check that $\varphi \in L^{s'}(\Omega)$ one can apply again the H\"older inequality, with exponents $r/(r-(p-2)s')$ and $r/((p-2)s')$.

(iii) If $p>N$, then $\psi\in L^\infty(\Omega)$. As a consequence, the function $\varphi$ defined in \eqref{eq:phi_definition} belongs to $L^1(\Omega)$ under the integrability assumption $1/a \in L^{p-2}(\Omega)$.
\end{proof}


The previous lemmas are providing us the tools for the proof of Theorem \ref{theorem:invariance_of_the_cone}.

\begin{proof}[Proof of Theorem \ref{theorem:invariance_of_the_cone}]
Suppose first $2N/(N+2)\leq p< 2$ (the first inequality being strict for $N=2$). We combine Lemma \ref{lemma:invariance_of_cone_first_estimate} (i) and Lemma \ref{lemma:invariance_of_cone_second_estimate_general p < 2}, with the choice $w=u-\pi_\alpha(u)$, to obtain
\[
\|[K(u)-\alpha]^-\|=C (\|K(u)\|+\|\alpha\|)^{2-p} o(\|u-\pi_\alpha(u)\|) \quad\text{ as }
\|u-\pi_\alpha(u)\|\to0.
\]
Let us consider an arbitrary bounded subset $\mathcal{U}\subset W_0^{1,p}(\Omega)$. By Remark \ref{rem:bounded norm} the set $\{K(u):\ u\in\mathcal{U}\}$ is also bounded. Recalling that $\|u-\pi_\alpha(u)\|=\dist(u,{\mathcal C}_\alpha)$ and that $\dist(K(u),{\mathcal C}_\alpha)\leq \|[K(u)-\alpha]^-\|$ (see Lemma \ref{lemma:properties_of_distance}), the previous estimates writes
\[
\dist(K(u),{\mathcal C}_{\alpha})= o(\dist(u,{\mathcal C}_{\alpha})) \quad \mbox{ for every } u\in\mathcal{U},\mbox{ as } \dist(u,{\mathcal C}_{\alpha})\ri 0.
\]
This provides the locally $K$-invariance of ${\mathcal C}_\alpha$ in the case $2N/(N+2)\leq p< 2$.

Similarly, for $p\geq2$ we prove
\[
\dist(K(u),{\mathcal C}_{\alpha})= o(\dist(u,{\mathcal C}_{\alpha})) \quad \mbox{ for every }
u\in W_0^{1,p}(\Omega),\mbox{ as } \dist(u,{\mathcal C}_{\alpha})\ri 0.
\]
Indeed, for $p=2$ we apply Lemmas \ref{lemma:invariance_of_cone_first_estimate} (ii) and \ref{lemma:invariance_of_cone_second_estimate_general p < 2}, while for $p>2$ we apply Lemmas \ref{lemma:invariance_of_cone_first_estimate} (ii) and \ref{lemma:invariance_of_cone_second_estimate_general p>2}. This provides the strict $K$-invariance in the case $p\geq2$. Since the case of a supersolution $\beta$ can be treated in a similar manner, our proof is complete.
\end{proof}

\section{A four solutions theorem}\label{section: application}
In this section we prove Theorem \ref{theorem:application} as an application of the abstract results. Let us first show the existence of strict sub-supersolutions in our context.

\begin{lemma}\label{lemma: application sub-supersolution large}
Let $p\geq 2N/(N+2)$ (the inequality being strict for $N=2$) and let $f$ satisfy $(f2)-(f5)$. Then there exist a strict subsolution $\alpha_1<0$ and a strict supersolution $\beta_2>0$ to \eqref{eq:main_equation}, with the property that ${\mathcal C}_{\alpha_1}$ and ${\mathcal C}^{\beta_2}$ are locally $K$-invariant.
\end{lemma}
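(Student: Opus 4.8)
The plan is to obtain $\alpha_1$ and $\beta_2$ as solutions of two auxiliary \emph{sublinear} problems whose nonlinearity is the model term $\mu|t|^{p-2}t$ furnished by $(f4)$, shifted by a suitable positive constant. Concretely, fix a constant $\epsilon_0>0$ (to be chosen large) and let $\alpha_1\in W_0^{1,p}(\Omega)$ solve
\[
-\Delta_p\alpha_1=\mu|\alpha_1|^{p-2}\alpha_1-\epsilon_0\quad\text{in }\Omega .
\]
Since $(f4)$ guarantees $\mu<\lambda_1$, the functional $u\mapsto \frac1p\|u\|^p-\frac{\mu}{p}\|u\|_{L^p(\Omega)}^p+\epsilon_0\int_\Omega u\,dx$ is coercive and weakly lower semicontinuous: by the variational characterization $\|u\|^p\geq\lambda_1\|u\|_{L^p(\Omega)}^p$ one gets $\frac1p\|u\|^p-\frac{\mu}{p}\|u\|_{L^p(\Omega)}^p\geq\frac1p(1-\mu/\lambda_1)\|u\|^p$, while the compact embedding makes the lower order terms weakly continuous. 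Hence a minimizer exists and solves the displayed equation; define $\beta_2$ symmetrically as a solution of $-\Delta_p\beta_2=\mu|\beta_2|^{p-2}\beta_2+\epsilon_0$. Standard regularity for sublinear $p$-Laplace equations gives $\alpha_1,\beta_2\in L^\infty(\Omega)$.

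First I would fix the sign. Testing the equation for $\alpha_1$ with $\alpha_1^+$ and using $\|\alpha_1^+\|^p\geq\lambda_1\|\alpha_1^+\|_{L^p(\Omega)}^p$ together with $\mu<\lambda_1$ forces $\alpha_1^+\equiv0$; the strong maximum principle \cite{Vazquez1984} then yields $\alpha_1<0$ in $\Omega$, and symmetrically $\beta_2>0$.

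The heart of the argument is to verify that $\alpha_1$ is a \emph{strict} subsolution. Writing the defining identity in the form $-\Delta_p\alpha_1=f(x,\alpha_1)-a(x)$ forces
\[
a(x)=f(x,\alpha_1(x))-\mu|\alpha_1(x)|^{p-2}\alpha_1(x)+\epsilon_0 ,
\]
and the whole point is to show $a>0$ a.e. On $\{\alpha_1<-R\}$ assumption $(f4)$ gives $f(x,\alpha_1)\geq\mu|\alpha_1|^{p-2}\alpha_1$ (the inequality flips because $|\alpha_1|^{p-2}\alpha_1<0$), whence $a\geq\epsilon_0>0$. On the complementary region $\{-R\leq\alpha_1<0\}$ the quantity $\mu|s|^{p-2}s-f(x,s)$ varies over the compact set $\overline{\Omega}\times[-R,0]$, so by continuity $(f2)$ it is bounded above by some $L\geq0$; choosing $\epsilon_0>L$ then yields $a\geq\epsilon_0-L>0$ there as well. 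This is precisely the delicate point I expect to be the main obstacle: near $\partial\Omega$, where $\alpha_1\to0$, hypothesis $(f5)$ (with $\lambda>\lambda_1>\mu$) pushes $f(x,\alpha_1)$ \emph{below} $\mu|\alpha_1|^{p-2}\alpha_1$, so the bare model problem would not produce a subsolution; the constant shift $\epsilon_0$ is exactly what absorbs this bounded discrepancy. An identical computation, with $\epsilon_0>\sup_{(x,s)\in\overline{\Omega}\times[0,R]}(f(x,s)-\mu s^{p-1})$, shows $\beta_2$ is a strict supersolution with remainder $b=\mu|\beta_2|^{p-2}\beta_2+\epsilon_0-f(x,\beta_2)>0$.

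Finally, both remainders are bounded below by the positive constant $\epsilon_0-L$, so $1/a,\,1/b\in L^\infty(\Omega)$. Consequently every integrability requirement of Theorem \ref{theorem:invariance_of_the_cone} is automatically met on the bounded domain $\Omega$: condition \eqref{eq:integrability_of_1/a_p<2} in the range $2N/(N+2)\leq p<2$ (including the $L^\infty$ reading at the endpoint $p=2N/(N+2)$), and condition \eqref{eq:integrability_of_1/a} for $p\geq2$. Invoking that theorem, $\mathcal{C}_{\alpha_1}$ and $\mathcal{C}^{\beta_2}$ are strictly (hence locally) $K$-invariant, which is the assertion. The sign and regularity steps are routine; the genuine difficulty is the reconciliation near the boundary of the sublinear bound $(f4)$ with the behaviour $(f5)$, resolved by the constant shift.
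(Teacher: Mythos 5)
Your proof is correct, and it follows the same overall strategy as the paper's: solve an auxiliary coercive problem (coercivity coming from $\mu<\lambda_1$ in $(f4)$), deduce the sign by testing with the positive part and applying V\'azquez's strong maximum principle, bound the remainder from below, and invoke Theorem \ref{theorem:invariance_of_the_cone} via $1/a,1/b\in L^\infty(\Omega)$. The one genuine difference is in the choice of auxiliary problem and, consequently, in how the remainder is kept away from zero near $\partial\Omega$. The paper takes $-\Delta_p\alpha_1-\tfrac{\lambda_1+\mu}{2}|\alpha_1|^{p-2}\alpha_1=-g(x)$, where $g\in L^\infty(\Omega)$ is the correction making $f(x,t)\geq\mu|t|^{p-2}t-g(x)$ for $t\leq0$, and imposes \emph{strictly negative} boundary data; the remainder is then bounded below by $\tfrac{\lambda_1-\mu}{2}|\alpha_1|^{p-1}$, which is uniformly positive only because $\alpha_1<0$ up to and including $\partial\Omega$. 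You instead shift by a large constant $\epsilon_0$ and keep homogeneous Dirichlet data: since $\epsilon_0$ exceeds $\sup_{\overline\Omega\times[-R,0]}\bigl(\mu|s|^{p-2}s-f(x,s)\bigr)$ (a quantity depending only on $f,\mu,R$, so there is no circularity in choosing it beforehand), the remainder is bounded below by the constant $\epsilon_0-L>0$ irrespective of the decay of $\alpha_1$ at the boundary. This buys you a cleaner formulation (no nonzero trace to interpret) at essentially no cost; both routes land on $1/a,1/b\in L^\infty(\Omega)$, which trivially verifies \eqref{eq:integrability_of_1/a_p<2} and \eqref{eq:integrability_of_1/a} on the bounded domain, and strict $K$-invariance indeed implies local $K$-invariance, so the final step is sound.
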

\begin{proof}
By $(f2)$ and $(f4)$ there exists $g\in L^\infty(\Omega)$, $g\geq0$, such that
\begin{equation}\label{eq: application sub-supersolution large}
f(x,t)\geq \mu |t|^{p-2}t-g(x) \quad\text{ for every } t\leq0, \text{ a.e. } x\in\Omega.
\end{equation}
We consider the problem
\begin{equation}\label{eq: problem g}
\left\{\begin{array}{ll}
-\Delta_p \alpha_1 - \frac{\lambda_1+\mu}{2} |\alpha_1|^{p-2}\alpha_1 = - g(x) \quad &\text{in }\Omega \\
\alpha_1<0 & \text{on }\partial\Omega.
\end{array}\right.
\end{equation}
 Since $g\in L^\infty(\Omega)$ and $(\lambda_1+\mu)/2<\lambda_1$, the energy functional associated to the previous equation is well defined, coercive and weakly lower semicontinuous, thus it assures the existence of a solution for \eqref{eq: problem g}. Let $\alpha_1$ denote a solution of \eqref{eq: problem g}. By testing this equation with $[\alpha_1]^+\in W_0^{1,p}(\Omega)$, one sees that $\alpha_1\leq 0$ in $\Omega$, so that \eqref{eq: application sub-supersolution large} implies
\[
f(x,\alpha_1(x))\geq\mu|\alpha_1(x)|^{p-2}\alpha_1(x) -g(x).
\]
Moreover, by the strong maximum principle (\cite[Theorem 5]{Vazquez1984}, see also \cite[Theorem 2.2]{Damascelli1998}) and by our choice of the boundary conditions, $\alpha_1<0$ in $\overline{\Omega}$. We conclude that $\alpha_1$ is a strict subsolution with remainder
\[
a_1(x)\geq \frac{\mu-\lambda_1}{2} |\alpha_1(x)|^{p-2}\alpha_1(x) >0 \quad \text{ in } \overline{\Omega}.
\]
We conclude that $1/a_1 \in L^\infty(\Omega)$, so that ${\mathcal C}_{\alpha_1}$ is locally $K$-invariant by Theorem \ref{theorem:invariance_of_the_cone}. We construct $\beta_2$ in a similar way.
\end{proof}

In order to apply Theorem \ref{theorem:not ordered}, we need to find another couple of sub - supersolutions. We will find a continuum of couples of not ordered sub-supersolutions, parameterized by $l\in (0,\bar l)$, where $\bar l$ is given below. Keeping the notation $\phi_1$ for a first positive eigenfunction of $-\Delta_p$ and recalling Proposition \ref{prop: first eigenvalue and eigenfuntion}, we set
\[
\bar l=\frac{\bar t}{\|\phi_1\|_{L^\infty(\Omega)}},
\]
where $\bar t$ is such that
\begin{equation}\label{eq:f5_limit}
\frac{f(x,t)}{|t|^{p-2}t}>\frac{\lambda+\lambda_1}{2} \quad \text{ for all } |t|<\bar t, \text{ a.e. }x\in\Omega
\end{equation}
and moreover, in case $p>2$,
\begin{equation}\label{eq:f6_limit}
\left|\frac{\partial f}{\partial t}(x,t) \right|< 2(p-1) \lambda |t|^{p-2} \quad \text{ for all } |t|<\bar t,
\text{ a.e. }x\in\Omega
\end{equation}
which is possible by assumption $(f5)$. Then for every $0<l<\bar l$ we have strict subsolutions
\[
\alpha_{2,l}(x):=l\phi_1(x), \quad \text{ with remainders }\quad a_{2,l}(x)=-\lambda_1(l\phi_1(x))^{p-1}+f(x,l\phi_1(x)),
\]
and strict supersolutions
\[
\beta_{1,l}(x):=-l\phi_1(x), \quad \text{ with remainders }\quad
b_{1,l}(x)=-\lambda_1(l\phi_1(x))^{p-1}-f(x,-l\phi_1(x)).
\]
To verify the hypotheses of Theorem \ref{theorem:not ordered}, notice that, for all $l\in(0,\bar l)$, $\beta_{1,l}<\alpha_{2,l}$. Also, for sufficiently small $l$, $\alpha_1<\beta_{1,l}$ and $\alpha_{2,l}<\beta_2$. Some work is needed, but we can show that ${\mathcal C}_{\alpha_{2,l}}$ and ${\mathcal C}^{\beta_{1,l}}$ are locally $K$-invariant for every $p>({N-2+\sqrt{9N^2-4N+4}})/({2N})$. In the following we shall drop the dependence on $l$ where not explicitly needed.

 \begin{lemma}\label{lemma: co-area formula}
Assume $\varphi\in C^1(\Omega)\cap C(\partial\Omega)$ is such that $\varphi(x)>0$ for $x\in\Omega$ and $\varphi(x)=0$ for $x\in\partial\Omega$. Moreover, $\varphi$ satisfies
\[
-\nabla \varphi \cdot\nu\geq C>0\quad\mbox{on }\partial\Omega,
\]
where $C$ is a fixed constant and $\nu$ is the outer normal to $\Omega$. Then
\[
\frac{1}{\varphi^s}\in L^{1}(\Omega)\quad\mbox{for all }s\in (0,1).
\]
\end{lemma}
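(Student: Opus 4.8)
The plan is to localize the problem near $\partial\Omega$, where the only possible singularity of $1/\varphi^s$ occurs, and there to show that $\varphi$ decays no faster than the distance to the boundary. First I would dispose of the interior: for $\delta_0>0$ the set $\{x\in\Omega:\dist(x,\partial\Omega)\geq\delta_0\}$ is a compact subset of $\Omega$, on which $\varphi$ is continuous and strictly positive, hence bounded below by some $m>0$; therefore $1/\varphi^s\leq 1/m^s$ there and its integral over this set is finite. It thus remains to control the integral over the boundary layer $\Omega_{\delta_0}=\{x\in\Omega:\ 0<\dist(x,\partial\Omega)<\delta_0\}$.

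The heart of the matter is a linear lower bound $\varphi(x)\geq c\,\dist(x,\partial\Omega)$ on $\Omega_{\delta_0}$ for $\delta_0$ small and some $c>0$. Since $\Omega$ is a bounded regular domain, the distance function $d(x)=\dist(x,\partial\Omega)$ is $C^1$ on a tubular neighborhood of $\partial\Omega$, with $|\nabla d|=1$, $\nabla d=-\nu$ on $\partial\Omega$, and a well-defined nearest-point projection $\pi$ onto $\partial\Omega$. The hypothesis reads $\nabla\varphi\cdot\nabla d=-\nabla\varphi\cdot\nu\geq C$ on $\partial\Omega$; by continuity of $\nabla\varphi$ up to the boundary and compactness of $\partial\Omega$, after shrinking $\delta_0$ I may assume $\nabla\varphi\cdot\nabla d\geq C/2$ throughout $\Omega_{\delta_0}$. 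For $x\in\Omega_{\delta_0}$ I would then integrate $\varphi$ along the normal segment from $\pi(x)$ to $x$: writing $\varphi(x)=\varphi(x)-\varphi(\pi(x))=\int_0^{d(x)}\nabla\varphi\cdot(-\nu(\pi(x)))\,dt\geq \frac{C}{2}\,d(x)$, using $\varphi=0$ on $\partial\Omega$. This yields $1/\varphi(x)^s\leq (2/C)^s\,d(x)^{-s}$ on $\Omega_{\delta_0}$.

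Finally I would invoke the coarea formula to evaluate $\int_{\Omega_{\delta_0}}d^{-s}\,dx$. Since $|\nabla d|=1$, the coarea formula gives $\int_{\Omega_{\delta_0}}d(x)^{-s}\,dx=\int_0^{\delta_0}t^{-s}\,\mathcal{H}^{N-1}(\{d=t\})\,dt$, and the regularity of $\partial\Omega$ guarantees that the level sets $\{d=t\}$ have $(N-1)$-dimensional measure bounded uniformly in $t\in(0,\delta_0)$. The remaining one-dimensional integral $\int_0^{\delta_0}t^{-s}\,dt$ converges precisely because $s<1$; combining this with the bound $1/\varphi^s\leq(2/C)^s d^{-s}$ on $\Omega_{\delta_0}$ and the interior estimate concludes that $1/\varphi^s\in L^1(\Omega)$.

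I expect the main obstacle to be the rigorous derivation of the uniform linear lower bound on the whole tubular neighborhood: it rests on the $C^1$ regularity of $d$ near $\partial\Omega$ (hence on the regularity of $\Omega$) and on the continuity of $\nabla\varphi$ up to $\partial\Omega$, which is implicit in the statement since the condition $-\nabla\varphi\cdot\nu\geq C$ is imposed pointwise on $\partial\Omega$. Once that bound is in place, the coarea step and the interior estimate are routine, and the restriction $s<1$ enters only through the convergence of $\int_0^{\delta_0}t^{-s}\,dt$.
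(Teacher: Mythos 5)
Your proof follows essentially the same route as the paper's: the key step in both is the linear lower bound $\varphi(x)\geq C\,\dist(x,\partial\Omega)$ obtained by integrating $\nabla\varphi$ along the normal segment joining $x$ to its nearest boundary point, followed by a coarea-formula computation that reduces everything to the convergence of $\int_0^{\delta_0}t^{-s}\,dt$ for $s<1$. If anything, your version is slightly more careful: the paper applies the boundary inequality $-\nabla\varphi\cdot\nu\geq C$ along the whole segment without the continuity/tubular-neighborhood justification that you supply, and it works with an auxiliary Lipschitz function $\omega$ comparable to the distance rather than with $\dist(\cdot,\partial\Omega)$ itself, but these are cosmetic differences.
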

\begin{proof}
We start by recalling the co-area formula
\begin{equation}\label{eq: coarea}
\int_{\Omega}\frac{1}{\varphi(x)}\,dx=\int_{0}^{1}\int_{\{\omega^{-1}(\tau)\}}\frac{1}{|\nabla \omega(x)|\varphi(x)}\,dS\,d\tau,
\end{equation}
where $\omega:\Omega\ri [0,1]$ is a Lipschitz function. Since $\Omega$ is smooth, $\omega$ can be chosen such that for some positive constants $C_1,\,C_2$ and for every $x\in\Omega$,
\begin{equation}\label{eq: omega prop 1}
\omega(x)\leq C_1 \dist(x,\partial \Omega),
\end{equation}
\begin{equation}\label{eq: omega prop 2}
\frac{1}{|\nabla \omega(x)|}\leq C_2.
\end{equation}
Given any $x\in \Omega$, let $y(x)$ be the point belonging to $\partial\Omega$ which satisfies $|y(x)-x|=\dist(x,\partial\Omega)$. Then we have
\[
\begin{split}
\varphi(x)&=\varphi(x)-\varphi(y(x))=\int_0^1 \frac{d}{dt} \varphi(tx+(1-t)y(x))dt=\\
&=\int_0^1 \nabla \varphi(tx+(1-t)y(x))\cdot(x-y(x)) dt=\\
&=\int_0^1 -\nabla \varphi(tx+(1-t)y(x))\cdot \frac{y(x)-x}{|y(x)-x|} \cdot |y(x)-x| dt=\\
&\geq C \int_0^1 |y(x)-x| dt,
\end{split}
\]
that is,
\begin{equation}\label{eq about distance}
\varphi(x)\geq C\dist(x,\partial\Omega).
\end{equation}
This, together with \eqref{eq: omega prop 1}, implies
\[
 \varphi(x)\geq \frac{C}{C_1}\tau\quad \mbox{for every }x\in \{\omega^{-1}(\tau)\}.
\]
By the above relation, the co-area formula \eqref{eq: coarea} and the inequality \eqref{eq: omega prop 2}, we get
\[
\int_{\Omega}\frac{1}{\varphi^s(x)}\, dx \leq C_2 \left(\frac{C_1}{C}\right)^s \int_{0}^{1}|\{\omega^{-1}(\tau)\}| \frac{1}{\tau^s}\,d\tau,
\]
which is integrable for $s\in(0,1)$.
\end{proof}

\begin{lemma}\label{lemma: application K invariance p<2}
Let $({N-2+\sqrt{9N^2-4N+4}})/({2N})<p<2$ and let $f$ satisfy $(f2)-(f5)$. Then ${\mathcal C}_{\alpha_2}$ and ${\mathcal C}^{\beta_1}$ are locally $K$-invariant.
\end{lemma}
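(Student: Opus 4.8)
The plan is to deduce the claim from Theorem~\ref{theorem:invariance_of_the_cone}(i), which grants local $K$-invariance in the range $2N/(N+2)\le p<2$ as soon as the integrability condition \eqref{eq:integrability_of_1/a_p<2} holds for the remainders. Thus I would verify that
\[
\left(\frac{1}{a_{2,l}}\right)^{\frac{2-p}{p-1}\frac{p^\star}{p^\star-2}},\quad \left(\frac{1}{b_{1,l}}\right)^{\frac{2-p}{p-1}\frac{p^\star}{p^\star-2}}\in L^1(\Omega).
\]
The guiding idea is that both remainders are bounded below by a multiple of $\phi_1^{p-1}$, so that their reciprocals blow up only near $\partial\Omega$, at a rate that the co-area estimate of Lemma~\ref{lemma: co-area formula} can absorb.

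First I would establish the pointwise lower bound. For $0<l<\bar l$ we have $0<l\phi_1(x)<\bar t$ for a.e.\ $x\in\Omega$, so \eqref{eq:f5_limit} applied at $t=l\phi_1(x)>0$ gives $f(x,l\phi_1(x))>\frac{\lambda+\lambda_1}{2}(l\phi_1(x))^{p-1}$; subtracting $\lambda_1(l\phi_1(x))^{p-1}$ yields
\[
a_{2,l}(x)>\frac{\lambda-\lambda_1}{2}(l\phi_1(x))^{p-1}.
\]
Since $\lambda>\lambda_2>\lambda_1$ the prefactor is positive, whence $a_{2,l}(x)\ge C\phi_1(x)^{p-1}$ and $1/a_{2,l}\le C\phi_1^{-(p-1)}$ for some $C=C(l)>0$. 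The same computation at $t=-l\phi_1(x)<0$, where the denominator in \eqref{eq:f5_limit} is negative, gives $f(x,-l\phi_1(x))<-\frac{\lambda+\lambda_1}{2}(l\phi_1(x))^{p-1}$ and hence the identical bound $b_{1,l}(x)>\frac{\lambda-\lambda_1}{2}(l\phi_1(x))^{p-1}$.

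Plugging $1/a_{2,l}\le C\phi_1^{-(p-1)}$ into the integrability condition reduces it to $\phi_1^{-s}\in L^1(\Omega)$ with
\[
s=(p-1)\cdot\frac{2-p}{p-1}\cdot\frac{p^\star}{p^\star-2}=(2-p)\frac{p^\star}{p^\star-2}.
\]
By Proposition~\ref{prop: first eigenvalue and eigenfuntion}, $\phi_1$ meets the hypotheses of Lemma~\ref{lemma: co-area formula} (positive in $\Omega$, vanishing on $\partial\Omega$, with $-\nabla\phi_1\cdot\nu>0$ there), so $\phi_1^{-s}\in L^1(\Omega)$ as soon as $0<s<1$. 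The crux is therefore the purely algebraic verification that $s<1$ matches the stated threshold, and I expect this to be the main point of the proof. Writing $p^\star=Np/(N-p)$ and clearing the denominator (positive since $p>2N/(N+2)$), the inequality $s<1$ is equivalent to
\[
Np^2+(2-N)p-2N>0,
\]
a quadratic with positive leading coefficient and discriminant $(N-2)^2+8N^2=9N^2-4N+4$, whose larger root is exactly $(N-2+\sqrt{9N^2-4N+4})/(2N)$. Hence $s<1$ holds precisely under the hypothesis $p>(N-2+\sqrt{9N^2-4N+4})/(2N)$. Since this quadratic is negative at $2N/(N+2)$ and positive at $p=2$, the threshold lies in $(2N/(N+2),2)$, so Theorem~\ref{theorem:invariance_of_the_cone}(i) genuinely applies and delivers the local $K$-invariance of ${\mathcal C}_{\alpha_2}$; the identical estimate for $b_{1,l}$ gives that of ${\mathcal C}^{\beta_1}$.
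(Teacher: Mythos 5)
Your proposal is correct and follows exactly the paper's route: the lower bound $a_{2,l},b_{1,l}>\frac{\lambda-\lambda_1}{2}(l\phi_1)^{p-1}$ from \eqref{eq:f5_limit}, the integrability of $\phi_1^{-s}$ for $s\in(0,1)$ via the Hopf property and Lemma \ref{lemma: co-area formula}, and then Theorem \ref{theorem:invariance_of_the_cone}(i). The only difference is that you spell out the algebraic verification $(2-p)p^\star/(p^\star-2)<1\iff Np^2+(2-N)p-2N>0$, which the paper leaves as ``not difficult to check''; your computation of the discriminant $9N^2-4N+4$ and of the relevant root is correct.
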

\begin{proof}
Notice that \eqref{eq:f5_limit} holds with $t=\alpha_{2}$ and $t=\beta_{1}$, so that
\[
a_{2}(x),b_{1}(x)>\frac{\lambda-\lambda_1}{2} (l\phi_1(x))^{p-1}\quad \mbox{for a.e. } x\in\Omega.
\]
Since $\phi_1$ satisfies the Hopf lemma (see Proposition \ref{prop: first eigenvalue and eigenfuntion}), we deduce from Lemma \ref{lemma: co-area formula} that
\[
\left(\frac{1}{a_2}\right)^{s/(p-1)}, \left(\frac{1}{b_1}\right)^{s/(p-1)} \in L^1(\Omega) \quad\mbox{for all }s\in (0,1).
\]
At this point it is not difficult to check that $1/a_2$ and $1/b_1$ satisfy the integrability condition \eqref{eq:integrability_of_1/a_p<2} whenever $({N-2+\sqrt{9N^2-4N+4}})/({2N})<p<2$, so that the locally $K$-invariance is a direct consequence of Theorem \ref{theorem:invariance_of_the_cone} (i).
\end{proof}

In order to prove the $K$-invariance for every $p\geq2$ we need one more estimate.
\begin{lemma}\label{lemma: application estimates}
Let $p\geq 2$ and let $f$ satisfy $(f2)-(f5)$, then
\[
\|[h(\cdot,\alpha_2+w)-h(\cdot,\alpha_2)+a_2]^-\|_{L^{s'}(\Omega)}=o(\|w\|^{p-1}) \quad \text{ as } \|w\|\to0,
\]
where $s=p^\star$ if $p\neq N$ and $1<s<p^\star$ if $p=N$. An analogous estimate holds for $\beta_1$.
\end{lemma}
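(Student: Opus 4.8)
The plan is to follow the dominated-convergence strategy used in the proofs of Lemmas \ref{lemma:invariance_of_cone_second_estimate_general p < 2} and \ref{lemma:invariance_of_cone_second_estimate_general p>2}: passing to a subsequence of an arbitrary $(w_n)_n$ with $\|w_n\|\to0$ (hence $w_n\to0$ a.e.), I would set
\[
\varphi_n(x)=\frac{[h(x,\alpha_2+w_n)-h(x,\alpha_2)+a_2]^-}{\|w_n\|^{p-1}},
\]
observe that $\varphi_n\to0$ a.e. on the full-measure set where $w_n\to0$ and $a_2>0$ (by continuity of $h$), and look for a dominating $\varphi\in L^{s'}(\Omega)$. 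For $p=2$ there is nothing new: the claim is exactly Lemma \ref{lemma:invariance_of_cone_second_estimate_general p < 2} applied to the strict subsolution $\alpha_2$ with remainder $a_2$. The real work is for $p>2$, where the difficulty is that $a_2$ vanishes like $\phi_1^{p-1}$ near $\partial\Omega$, so that $1/a_2\sim\phi_1^{-(p-1)}$ fails in general the integrability demanded in \eqref{eq:integrability_of_1/a}, and Lemma \ref{lemma:invariance_of_cone_second_estimate_general p>2} cannot be invoked directly.

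The way around this is to exploit the precise form $\alpha_2=l\phi_1$ together with the flatness of $f$ near the origin. First, \eqref{eq:f5_limit} gives the lower bound $a_2(x)\geq \frac{\lambda-\lambda_1}{2}(l\phi_1(x))^{p-1}=\frac{\lambda-\lambda_1}{2}\alpha_2^{p-1}$, and since $\lambda>\lambda_1$ this reads $a_2\geq c_0\,\alpha_2^{p-1}$ for a constant $c_0>0$. Second, because $p>2$, assumption $(f5)$ makes $f(x,\cdot)$ differentiable near $0$ and, by \eqref{eq:f6_limit}, $|\partial_t h(x,\xi)|\leq C|\xi|^{p-2}$ for $|\xi|<\bar t$ (combining the bound on $\partial_t f$ with the $C^1$ derivative $M(p-1)|\xi|^{p-2}$ of the $M$-term). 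These two facts will replace the integrability hypothesis on $1/a_2$.

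The core estimate is the bound on $\varphi_n$ in the region $|w_n|<k$ (with $k\leq\bar t$); the large-value region $|w_n|\geq k$ is handled exactly as before through $(f1)$, producing the admissible dominating terms $c_k\psi^{q-1}$ and $c_k\psi^{p-1}$ with $\psi$ as in the quoted proofs. By $(f3)$ the negative part is nonzero only where $w_n<0$; writing $w_n=-v$, $v>0$, and integrating the derivative bound I obtain
\[
h(x,\alpha_2)-h(x,\alpha_2-v)=\int_{\alpha_2-v}^{\alpha_2}\partial_t h(x,\xi)\,d\xi\leq C\int_{\alpha_2-v}^{\alpha_2}|\xi|^{p-2}\,d\xi .
\]
A short case analysis then yields $[h(x,\alpha_2+w_n)-h(x,\alpha_2)+a_2]^-\leq C|w_n|^{p-1}$: when $v\leq\alpha_2$ the integral is at most $\alpha_2^{p-2}v$, and the negative part survives only if $C\alpha_2^{p-2}v>a_2\geq c_0\alpha_2^{p-1}$, i.e. $\alpha_2\leq (C/c_0)v$, whence $\alpha_2^{p-2}v\leq C'v^{p-1}$; when $v>\alpha_2$ the interval has length $v$ and lies in $\{|\xi|\leq v\}$, so the integral is again $\leq Cv^{p-1}$. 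Hence $\varphi_n\leq C\psi^{p-1}$ on this region, and the global dominating function $\varphi=\max\{c_k\psi^{q-1},C\psi^{p-1}\}$ lies in $L^{s'}(\Omega)$, since $\psi\in L^{p^\star}(\Omega)$ and $q<p^\star$ guarantee that $\psi^{q-1},\psi^{p-1}\in L^{s'}(\Omega)$ (with $s=p^\star$ if $p\neq N$, and any $1<s<p^\star$ if $p=N$). Dominated convergence then gives $\int_\Omega\varphi_n^{s'}\,dx\to0$, which is the assertion.

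I expect the case analysis just described to be the main obstacle: the whole point is to trade the missing integrability of $1/a_2$ for the decay $|\partial_t h(x,\xi)|\leq C|\xi|^{p-2}$ near $0$ and the matching lower bound $a_2\geq c_0\alpha_2^{p-1}$, so that the dangerous factor $1/a_2^{p-2}$ arising in Lemma \ref{lemma:invariance_of_cone_second_estimate_general p>2} never appears. The estimate for $\beta_1=-l\phi_1$ is entirely analogous, using the lower bound $b_1\geq c_0(l\phi_1)^{p-1}$ from \eqref{eq:f5_limit} and the same flatness bound near the origin.
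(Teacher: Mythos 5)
Your argument is correct and follows essentially the same route as the paper: the paper likewise combines the lower bound $a_2\ge\frac{\lambda-\lambda_1}{2}\alpha_2^{p-1}$ coming from \eqref{eq:f5_limit} with the derivative bound \eqref{eq:f6_limit} (written as a Lagrange remainder rather than your integrated form) to reach $[h(\cdot,\alpha_2+w)-h(\cdot,\alpha_2)+a_2]^-\le \tilde c_k|w|^{p-1}$, and then concludes by the dominated-convergence scheme of Lemma \ref{lemma:invariance_of_cone_second_estimate_general p>2}. The only cosmetic difference is that the paper splits at $|w|<k\alpha_2(x)$ rather than $|w|<k$, which keeps $|\xi|\le(k+1)\alpha_2$ and avoids your second case $v>\alpha_2$; if you keep the constant cutoff, just choose $k$ with $k+l\|\phi_1\|_{L^\infty(\Omega)}<\bar t$ so that the whole interval of integration stays inside the neighborhood where \eqref{eq:f6_limit} holds.
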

\begin{proof}
We proceed as in the proof of Lemma \ref{lemma:invariance_of_cone_second_estimate_general p>2}. For every $0<l<\bar{l}$ we fix
\[
0<k<\frac{\bar{l}}{l}-1.
\]
If $|w(x)|\geq k \alpha_2(x)$ then assumptions $(f4)$ and $(f5)$ imply
\[\begin{split}
|h(x,\alpha_2+w)-h(x,\alpha_2)|&\leq |h(x,\alpha_2+w)|+|h(x,\alpha_2)|\\&\leq C(|\alpha_2+w|^{p-1}+|\alpha_2|^{p-1})
\leq c_k |w|^{p-1}.
\end{split}\]
We discuss now the case $|w(x)|<k \alpha_2(x)$ with $x\in\Omega_0$ ($\Omega_0$ was introduced in \eqref{eq: Omega0}). By Taylor expansion with Lagrange remainder there exists $\xi(x)\in(\alpha_2(x)-w(x),\alpha_2(x)+w(x))$ such that
\[
h(x,\alpha_2(x)+w(x))-h(x,\alpha_2(x))=\left(\frac{\partial f}{\partial t}(x,\xi(x))+M(p-1)|\xi(x)|^{p-2}\right)w(x).
\]
Since $|\xi(x)|\leq (k+1) \alpha_2(x)\leq \bar l \|\phi_1\|_{L^\infty(\Omega)}$, then \eqref{eq:f6_limit} holds with $t=\alpha_2(x)$. We deduce the existence of $\tilde{d}_k>0$ such that
\begin{equation}\label{eq:taylor_expansion}
|h(x,\alpha_2+w)-h(x,\alpha_2)|\leq \tilde{d}_k \alpha_2^{p-2} w.
\end{equation}
Hence for $x\in\Omega_0$ with $|w(x)|<k \alpha_2(x)$ we have
\[
\begin{split}
h(x,\alpha_2+w)-h(x,\alpha_2)+a_2(x) & \geq -\tilde{d}_k \alpha_2^{p-2}|w|+f(x,\alpha_2)-\lambda_1 \alpha_2^{p-1}\\
&\geq -\tilde{d}_k \alpha_2^{p-2}|w| +\frac{\lambda-\lambda_1}{2} \alpha_2^{p-1},
\end{split}
\]
where we used \eqref{eq:f5_limit} in the last inequality. Now, if $|w|<(\lambda-\lambda_1)\alpha_2/(2\tilde{d}_k)$ then
$[h(x,\alpha_2(x)+w(x))-h(x,\alpha_2(x))+a(x)]^-\equiv0$, whereas in the complementary case we deduce from \eqref{eq:taylor_expansion} that
\[
\begin{split}
[h(x,\alpha_2(x)+w(x))-h(x,\alpha_2(x))+a(x)]^- & \leq \tilde{d}_k \alpha_2^{p-2} |w|^{p-1} \frac{1}{|w|^{p-2}} \\
& \leq \tilde{d}_k \alpha_2^{p-2} |w|^{p-1} \left(\frac{2\tilde{d}_k}{\lambda-\lambda_1}\right)^{p-2} \frac{1}{\alpha_2^{p-2}}.
\end{split}
\]
Therefore there exists $\tilde{c}_k>0$ such that
\[
[h(x,\alpha_2(x)+w(x))-h(x,\alpha_2(x))+a(x)]^- \leq \tilde{c}_k |w|^{p-1} \quad \text{ if } |w(x)|\leq k\alpha_2(x), x\in \Omega_0
\]
and the conclusion follows as in Lemma \ref{lemma:invariance_of_cone_second_estimate_general p>2}.
\end{proof}
%
The result above allows to prove the $K$-invariance for every $p\geq2$.
\begin{lemma}\label{lemma: application K invariance p>2}
Let $p\geq 2$ and let $f$ satisfy $(f2)-(f5)$, then ${\mathcal C}_{\alpha_2}$ and ${\mathcal C}^{\beta_1}$ are strictly $K$-invariant.
\end{lemma}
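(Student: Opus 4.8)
The plan is to run the same mechanism as in the proof of Theorem~\ref{theorem:invariance_of_the_cone} for $p\geq2$, but to feed it with the ad hoc bound of Lemma~\ref{lemma: application estimates} instead of the integrability-based Lemma~\ref{lemma:invariance_of_cone_second_estimate_general p>2}. This replacement is forced upon us: when $p>2$ the remainder $a_2$ associated to $\alpha_2=l\phi_1$ vanishes on $\partial\Omega$ and, by the Hopf boundary lemma for $\phi_1$ (Proposition~\ref{prop: first eigenvalue and eigenfuntion}), behaves like $\dist(x,\partial\Omega)^{p-1}$ there, so $1/a_2$ grows like $\dist(x,\partial\Omega)^{-(p-1)}$ and in general fails to belong to the space $L^r(\Omega)$ demanded by \eqref{eq:integrability_of_1/a}. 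Hence Theorem~\ref{theorem:invariance_of_the_cone}~(ii) cannot be invoked as a black box. (For $p=2$ the statement is in fact immediate, since in that case Theorem~\ref{theorem:invariance_of_the_cone}~(ii) imposes no integrability condition at all; the substance is the range $p>2$.)

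Concretely, first I would apply Lemma~\ref{lemma:invariance_of_cone_first_estimate}~(ii) with $\alpha=\alpha_2$, which gives, for every $u\in W_0^{1,p}(\Omega)$,
\[
\|[K(u)-\alpha_2]^-\|^{p-1}\leq C\,\|[h(\cdot,\alpha_2+u-\pi_{\alpha_2}(u))-h(\cdot,\alpha_2)+a_2]^-\|_{L^{s'}(\Omega)},
\]
with $s=p^\star$ for $p\neq N$ and $1<s<p^\star$ for $p=N$. Taking $w=u-\pi_{\alpha_2}(u)$ and invoking Lemma~\ref{lemma: application estimates}, the right-hand side is $o(\|u-\pi_{\alpha_2}(u)\|^{p-1})$ as $\|u-\pi_{\alpha_2}(u)\|\to0$, so that $\|[K(u)-\alpha_2]^-\|=o(\|u-\pi_{\alpha_2}(u)\|)$. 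Finally I would translate this into distances via Lemma~\ref{lemma:properties_of_distance}, using $\|u-\pi_{\alpha_2}(u)\|=\dist(u,{\mathcal C}_{\alpha_2})$ from part~(i) and $\dist(K(u),{\mathcal C}_{\alpha_2})\leq\|[K(u)-\alpha_2]^-\|$ from part~(ii), to conclude
\[
\dist(K(u),{\mathcal C}_{\alpha_2})=o(\dist(u,{\mathcal C}_{\alpha_2}))\qquad\text{as }\dist(u,{\mathcal C}_{\alpha_2})\to0.
\]

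The decisive point for obtaining \emph{strict} rather than merely local invariance is that both ingredients are uniform in $u$: in contrast to the $p<2$ estimate of Lemma~\ref{lemma:invariance_of_cone_first_estimate}~(i), the bound of part~(ii) carries no factor $(\|K(u)\|+\|\alpha\|)^{2-p}$, and Lemma~\ref{lemma: application estimates} is stated for all $w$ without restriction to bounded sets. Reading $\ep_{\alpha}$ off the displayed little-$o$ relation therefore yields $K({\mathcal C}_{\alpha_2,\ep})\subseteq{\mathcal C}_{\alpha_2,\ep/2}$ for all $0<\ep<\ep_\alpha$, which is exactly Definition~\ref{def:strict_K_invariance}. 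The cone ${\mathcal C}^{\beta_1}$ is treated verbatim, using the analogous estimate for $\beta_1$ recorded at the end of Lemma~\ref{lemma: application estimates}. I do not anticipate a real obstacle in this assembly: the entire analytic difficulty, namely producing an $L^{s'}$-integrable majorant despite the boundary degeneracy of $1/a_2$, has already been discharged in Lemma~\ref{lemma: application estimates}, and the only bookkeeping to watch is the two-regime choice of the exponent $s'$ according to whether $p\neq N$ or $p=N$.
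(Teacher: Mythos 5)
Your proposal is correct and follows exactly the route the paper intends: the authors omit this proof, stating only that it is ``an easy adaptation of the arguments from Section \ref{section: K invariance}, with the aid of Lemma \ref{lemma: application estimates}'', and your assembly of Lemma \ref{lemma:invariance_of_cone_first_estimate}~(ii), Lemma \ref{lemma: application estimates} with $w=u-\pi_{\alpha_2}(u)$, and Lemma \ref{lemma:properties_of_distance} is precisely that adaptation. Your side remark explaining why Theorem \ref{theorem:invariance_of_the_cone}~(ii) cannot be used as a black box (the boundary degeneracy $a_2\sim\dist(\cdot,\partial\Omega)^{p-1}$ defeating \eqref{eq:integrability_of_1/a}) correctly identifies why the ad hoc estimate is needed.
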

For brevity we omit the proof of this lemma since it is an easy adaptation of the arguments from Section \ref{section: K invariance}, with the aid of Lemma \ref{lemma: application estimates}.

In what follows, we prove another auxiliary result that is needed in the argumentation of Theorem \ref{theorem:application}.

\begin{lemma}\label{lemma:compactness_brezis_kato}
Let $f$ satisfy $(f2)-(f5)$ and let $(u_n)_n$ be a sequence of solutions to \eqref{eq:main_equation}, bounded in $W_0^{1,p}(\Omega)$. Then there exists $\bar u \in W_0^{1,p}(\Omega)$ and $\gamma\in (0,1)$ such that $u_n\to \bar u$ in $C^{1,\gamma}(\Omega)$ and $-\Delta_p \bar u=f(x,\bar u)$.
\end{lemma}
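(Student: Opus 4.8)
The plan is to extract a subsequence that converges in $C^{1,\gamma}(\overline\Omega)$ by first establishing a uniform $L^\infty$ bound, then a uniform $C^{1,\gamma}$ bound for the $u_n$, and finally to pass to the limit in the weak formulation of \eqref{eq:main_equation}. As is customary for such compactness statements, the convergence is understood up to a subsequence. Since $(u_n)_n$ is bounded in $W_0^{1,p}(\Omega)$, by reflexivity and the compact Sobolev embedding $W_0^{1,p}(\Omega)\hookrightarrow L^s(\Omega)$ for $1<s<p^\star$, we may assume $u_n\weak \bar u$ weakly in $W_0^{1,p}(\Omega)$, $u_n\to\bar u$ in $L^s(\Omega)$ and $u_n\to \bar u$ a.e. in $\Omega$, for some $\bar u\in W_0^{1,p}(\Omega)$. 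Recall that $(f2)$ and $(f4)$ imply $(f1)$, so that $|f(x,u_n)|\leq c_1+c_2|u_n|^{q-1}$ with $1<q<p^\star$, uniformly in $n$.

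The key step is a uniform $L^\infty$ bound, namely a constant $C>0$, independent of $n$, with $\|u_n\|_{L^\infty(\Omega)}\leq C$. To obtain it I would run a Moser--Brezis--Kato iteration on the equation $-\Delta_p u_n=f(x,u_n)$. Testing with suitable truncated powers of $u_n$ and exploiting the subcritical growth $q<p^\star$ from $(f1)$, one bootstraps the integrability of $u_n$ through a sequence of exponents tending to $+\infty$, each step being controlled by \eqref{eq:inequality_appearing_several_times} on the left-hand side and by the Sobolev inequality. The crucial observation is that all the constants entering the iteration depend only on $\Omega$, $p$, $q$, $c_1$, $c_2$ and on the uniform bound $\sup_n\|u_n\|$, so the resulting $L^\infty$ estimate is uniform in $n$.

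Granted the uniform $L^\infty$ bound, the right-hand sides $f(x,u_n)$ are uniformly bounded in $L^\infty(\Omega)$ by $(f1)$. The regularity theory for the $p$-Laplacian with bounded right-hand side up to the boundary (\cite{DiBenedetto1983,Lieberman1988,Tolksdorf1984}) then yields a $\gamma\in(0,1)$ and a constant $C>0$, both independent of $n$, such that $\|u_n\|_{C^{1,\gamma}(\overline\Omega)}\leq C$. By the compact embedding $C^{1,\gamma}(\overline\Omega)\hookrightarrow C^{1,\gamma'}(\overline\Omega)$ for $0<\gamma'<\gamma$ (Arzel\`a--Ascoli), a further subsequence converges in $C^{1,\gamma'}(\overline\Omega)$; its limit coincides with $\bar u$ by uniqueness of the weak $W_0^{1,p}$ limit, since strong $C^1$ convergence forces weak $W_0^{1,p}$ convergence to the same element. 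In particular $\nabla u_n\to\nabla\bar u$ and $u_n\to\bar u$ uniformly, so $|\nabla u_n|^{p-2}\nabla u_n\to|\nabla\bar u|^{p-2}\nabla\bar u$ and, by continuity of $f$, $f(x,u_n)\to f(x,\bar u)$ uniformly. Passing to the limit in the identity $\int_\Omega|\nabla u_n|^{p-2}\nabla u_n\cdot\nabla\phi\,dx=\int_\Omega f(x,u_n)\phi\,dx$ for every $\phi\in W_0^{1,p}(\Omega)$ gives $-\Delta_p\bar u=f(x,\bar u)$, and renaming $\gamma'$ as $\gamma$ completes the proof.

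I expect the main obstacle to be precisely the uniform $L^\infty$ estimate: while $L^\infty$ regularity of a single $W_0^{1,p}$ solution with subcritical growth is classical, genuine care is required to keep every constant in the Moser iteration independent of $n$, so that the bound depends only on $\sup_n\|u_n\|$ and on the structural data. Once this uniformity is secured, both the $C^{1,\gamma}$ regularity and the passage to the limit are standard.
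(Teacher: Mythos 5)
Your proposal is correct and follows essentially the same route as the paper: a Brezis--Kato/Moser bootstrap giving uniform $L^s$ and then $L^\infty$ bounds with constants depending only on the structural data and $\sup_n\|u_n\|$, followed by the $C^{1,\gamma}$ estimates of DiBenedetto--Lieberman--Tolksdorf and the compact embedding between H\"older spaces. The only difference is that you spell out the passage to the limit in the weak formulation, which the paper leaves implicit.
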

\begin{proof}
One can prove, via a Brezis-Kato argument, that for every $1<s<+\infty$ there exists a constant $C(s)$, depending only on $s$, such that $\|u_n\|_{L^s(\Omega)} \leq C(s)$ for every $n$. Given this, a standard regularity result (see for example \cite[Appendix Theorem E.0.19]{Peral1997}) provides $\|u_n\|_{L^\infty(\Omega)} \leq C$ for every $n$. The regularity theory in \cite{DiBenedetto1983,Lieberman1988,Tolksdorf1984} then provides the existence of $\gamma'\in(0,1)$ such that $\|u_n\|_{C^{1,\gamma'}(\Omega)} \leq C$ for every $n$. The result then follows from the compactness of the immersion $C^{1,\gamma}(\Omega)\hookrightarrow C^{1,\gamma'}(\Omega)$ for every $0<\gamma<\gamma'$.
\end{proof}

\begin{proof}[Proof of Theorem \ref{theorem:application}]
We can suppose that
\begin{equation}\label{eq:application_sign_of_f}
f(x,t)t\geq 0 \quad \text{ for every } t, \text{ a.e. } x\in\Omega,
\end{equation}
otherwise the conclusion follows as a consequence of the study conducted in \cite{BartschLiuWeth2005} (see assumption (H$_3'$) therein).
We proved in the previous lemmas that Theorem \ref{theorem:not ordered} applies for every $0<l<\bar l$, thus providing
a negative solution $u_{1,l}$, a positive solution $u_{2,l}$ and a third solution $u_{3,l}$ satisfying
\[
u_{1,l}\in {\mathcal C}_{\alpha_1}\cap {\mathcal C}^{\beta_{1,l}}, \quad
u_{2,l} \in {\mathcal C}_{\alpha_{2,l}}\cap {\mathcal C}^{\beta_2}, \quad
u_3\in ({\mathcal C}_{\alpha_1}\cap {\mathcal C}^{\beta_2})\setminus ({\mathcal C}_{\alpha_{2,l}}\cup {\mathcal C}^{\beta_{1,l}}).
\]
It only remains to show that $u_{3,l}$ changes sign for sufficiently small $l$.
Let us first prove that $u_{3,l}\not\equiv 0$. Thanks to Proposition \ref{prop: second eigenvalue} and to the fact that $C^1_0(\overline{\Omega})$ is dense in $W_0^{1,p}(\Omega)$, there exists $\gamma\in C([0,1],C^1_0(\overline{\Omega}))$ such that
\[
\gamma(0)=\beta_{1,l}, \ \gamma(1)=\alpha_{2,l}, \quad \int_\Omega|\gamma(s)|^p\,dx=l^p, \quad
\max_{s\in[0,1]}\int_\Omega |\nabla\gamma(s)|^p\,dx\leq \lambda_2 l^p .
\]
By choosing $l$ sufficiently small we have $\gamma(s) \in {\mathcal C}_{\alpha_1,\bar\ep/2}\cap {\mathcal C}^{\beta_2,\bar\ep/2}$ and, by $(f5)$, $F(\gamma(s))>\frac{\lambda_2 |\gamma(s)|^p}{p}$, for every $s\in [0,1]$. Hence $J(\gamma(s))<0$ for every $s\in [0,1]$. Because of the mountain pass characterization of $u_{3,l}$, we deduce that $u_{3,l}\not\equiv0$.

Let us assume by contradiction that $u_{3,l}\geq0$ for every $l>0$. To simplify the notation we denote by
\[
u_n:=u_{3,l_n}
\]
a sequence of solutions with $l_n\to0$ as $n\to+\infty$. Since $u_n\in {\mathcal C}_{\alpha_1}\cap{\mathcal C}^{\beta_2}$, Lemma \ref{lemma:compactness_brezis_kato} implies the existence of $\bar u\in W_0^{1,p}(\Omega)$ and $\gamma>0$ such that
\begin{equation}\label{eq:application_convergence_c_1}
u_n\to \bar u \quad \text{ in }C^{1,\gamma}(\Omega) .
\end{equation}
Obviously, $\bar u\geq 0$. Let us first remark that $\bar u\not\equiv0$. Indeed, we proved above that $J(u_n)<0$ for every $n$. Moreover, the variational characterization of $u_n$ implies that $J(u_n)\leq J(u_m)$ whenever $m<n$, since the min-max level is nonincreasing as $l_n\to0$. Therefore $J(\bar u)=\lim_{n\to+\infty} J(u_n)<0$, so that $\bar u\not\equiv0$. Since by Lemma \ref{lemma:compactness_brezis_kato} we know that $\bar u$ solves $-\Delta_p \bar u=f(x,\bar u)$ and by \eqref{eq:application_sign_of_f} we have $f(x, \bar u)\geq0$, the strong maximum principle and the generalized Hopf lemma (see \cite[Theorem 5]{Vazquez1984}) imply
\begin{equation}\label{eq:application_contradiction_sign_changing}
\bar u>0 \quad\text{ in }\Omega \quad\text{ and }\quad \partial_\nu\bar u<0 \quad \text{ on }\partial \Omega.
\end{equation}
Let us consider the set
\[
\Omega_n^-=\{x\in\Omega: \ u_n(x)<l_n\phi_1(x)\}.
\]
The localization of $u_n$ implies $|\Omega_n^-|>0$ for every $n$. We fix $\bar x\in \Omega_n^-$ and we let $y(\bar x)$ be the point belonging to $\partial\Omega$ which satisfies $|y(\bar x)-\bar x|=\dist(\bar x,\partial\Omega)$. On one hand, since $\phi_1\in C^1(\overline\Omega)$, there exists $C>0$ such that
\begin{equation}\label{eq at the end}
u_n(\bar x)< l_n\phi_1(\bar x)\leq C l_n \dist(\bar x,\partial\Omega).
\end{equation}
On the other hand, $\bar u$ satisfies \eqref{eq:application_contradiction_sign_changing} and proceeding as in the proof of Lemma \ref{lemma: co-area formula} we have
$$
\bar u (\bar x)\geq C\dist(\bar x,\partial\Omega).
$$
By the above relation and \eqref{eq:application_convergence_c_1}, there exists $C>0$ such that
\begin{equation}\label{eq about distance}
u_n(\bar x)\geq C \dist(\bar x,\partial\Omega).
\end{equation}

From \eqref{eq at the end} and \eqref{eq about distance} we infer the existence of a positive constant $C$ such that $l_n\geq C$, which is a contradiction for $n$ large.
\end{proof}

\section{Additional $K$-invariance results}\label{section: comments}

For the clarity of our work, we avoided possible ramifications of the discussion, but we can not ignore the fact that such ramifications exist. For example, there are alternatives to the conditions that ensured the $K$-invariance of open sets (see Theorem \ref{theorem:invariance_of_the_cone}) and we are going to present them.

\begin{theorem}\label{newtheorem:invariance_of_the_cone}
Let $f$ satisfy $(f1)-(f3)$ and $q$ be the exponent from the growth condition $(f1)$. Let $\alpha$ be a strict subsolution and $\beta$ be a strict supersolution for \eqref{eq:main_equation}, with remainders $a,b$ respectively, given in Definition \ref{definition_subsolution_supersolution}. Then
\begin{itemize}
\item[(i)]  ${\mathcal C}_{\alpha}$ and ${\mathcal C}^{\beta}$ are locally $K$-invariant if $2N/(N+1)\leq p< 2$ (the first inequality being strict for $N=2$), $ q\leq p^\star-p/(N-p)$  and
\begin{equation}\label{eq:integrability_of_1/a_p<2 Hardy}
\frac{\dist(\cdot,\partial\Omega)}{a^{({2-p})/({p-1})}},\,\frac{\dist(\cdot,\partial\Omega)}{b^{({2-p})/({p-1})}}\in L^{\infty}(\Omega);
\end{equation}
\item[(ii)]  ${\mathcal C}_{\alpha}$ and ${\mathcal C}^{\beta}$ are strictly $K$-invariant if $p>2$,
\begin{equation}\label{eq:first integrability_of_1/a for Hardy}
\frac{\dist(\cdot,\partial\Omega)^p}{a^{p-2}},\,\frac{\dist(\cdot,\partial\Omega)^p}{b^{p-2}}\in L^{\infty}(\Omega)
\end{equation}
and, in addition, either $p\geq N>2$ or $q\leq p^\star -p/(N-p)$;
\item[(iii)] ${\mathcal C}_{\alpha}$ and ${\mathcal C}^{\beta}$ are strictly $K$-invariant if $2<p< N$ with $p^\star -p/(N-p)< q<p^\star$ and
\begin{equation}\label{eq:second-first integrability_of_1/a for Hardy}
\frac{\dist(\cdot,\partial\Omega)^{p+N+q-1-\frac{Nq}{p}}}{a^{p-2}},\,
\frac{\dist(\cdot,\partial\Omega)^{p+N+q-1-\frac{Nq}{p}}}{b^{p-2}}\in L^{\infty}(\Omega).
\end{equation}
\end{itemize}
\end{theorem}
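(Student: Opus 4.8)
The plan is to run, essentially verbatim, the two--step scheme of Section~\ref{section: K invariance} used to prove Theorem~\ref{theorem:invariance_of_the_cone}. The first step, Lemma~\ref{lemma:invariance_of_cone_first_estimate}, is kept unchanged: it reduces the locally (respectively strictly) $K$--invariance of $\mathcal C_\alpha$ and $\mathcal C^\beta$ to showing that $\|[h(\cdot,\alpha+w)-h(\cdot,\alpha)+a]^-\|_{L^{s'}(\Omega)}$ is $o(\|w\|)$ when $p<2$ and $o(\|w\|^{p-1})$ when $p\ge2$, as $\|w\|\to0$, with $s=p^\star$ for $p\ne N$. The second step reproduces the dominated--convergence argument of Lemmas~\ref{lemma:invariance_of_cone_second_estimate_general p < 2} and \ref{lemma:invariance_of_cone_second_estimate_general p>2}: after passing to a subsequence one splits $\Omega_0$ (see \eqref{eq: Omega0}) according to whether $|w_n|\ge k$ or $|w_n|<k$, producing a dominating function $\varphi$ that is a finite maximum of a \emph{growth} contribution (a power $\psi^{q-1}$ or $\psi^{p-1}$ of the envelope $\psi$ of \eqref{eq:definition_of_psi}) and a \emph{boundary} contribution carrying the negative power of $a$. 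The entire novelty lies in how this last contribution is shown to belong to $L^{s'}(\Omega)$.

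Rather than isolating $\psi$ from $1/a$ by H\"older and invoking a global $L^r$--integrability of $1/a$ as in \eqref{eq:integrability_of_1/a_p<2}--\eqref{eq:integrability_of_1/a}, I would insert the pointwise bounds \eqref{eq:integrability_of_1/a_p<2 Hardy}--\eqref{eq:second-first integrability_of_1/a for Hardy} directly, replacing $a^{-(2-p)/(p-1)}$ (resp.\ $a^{-(p-2)}$) by $C\,\dist(\cdot,\partial\Omega)^{-1}$ (resp.\ $C\,\dist(\cdot,\partial\Omega)^{-\theta}$) on the support of the boundary term. The problem then becomes the validity of a distance--weighted Sobolev inequality of the type
\[
\left(\int_\Omega\frac{|\psi|^{m}}{\dist(x,\partial\Omega)^{\sigma}}\,dx\right)^{1/m}\le C\,\|\psi\|,\qquad \psi\in W_0^{1,p}(\Omega),
\]
whose admissibility is governed by two scaling constraints: a Sobolev (concentration) bound $m\le (N-\sigma)p/(N-p)$ and a boundary--layer bound $\sigma\le 1+(p-1)m/p$. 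In case (i) the boundary term is $\psi/\dist(\cdot,\partial\Omega)$, that is $m=\sigma=(p^\star)'$, and the concentration bound reduces exactly to $(p^\star)'\le p$, which is the stated hypothesis $p\ge 2N/(N+1)$; in case (ii) with $p\ge N$ the envelope $\psi$ lies in every $L^m(\Omega)$ (or in $L^\infty$), so the weight $\dist(\cdot,\partial\Omega)^{-p}$ produced by \eqref{eq:first integrability_of_1/a for Hardy} is harmless. These are the easy regimes.

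The delicate point --- and the step I expect to be the main obstacle --- is the borderline range $2<p<N$ (and, for $p<2$, the coupling of the boundary region with the growth exponent). Here one checks that the crude local Lipschitz/H\"older bound produces a boundary term, $\psi^{p-1}/\dist(\cdot,\partial\Omega)^{p}$ for $p>2$, which sits exactly at the concentration threshold yet \emph{violates} the boundary--layer bound $\sigma\le 1+(p-1)m/p$ once $N>1$, and hence is not in $L^{s'}(\Omega)$ by itself. The remedy is to split the negative power of the distance, by H\"older, between a pure factor $\dist(\cdot,\partial\Omega)^{-B}\in L^1(\Omega)$ (admissible precisely when $B<1$) and a factor absorbed into a distance--weighted Sobolev inequality, where the power of $\psi$ may be raised up to the critical exponent for the weight $\dist(\cdot,\partial\Omega)^{-1}$, namely $p^\star-p/(N-p)=(N-1)p/(N-p)$. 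This is exactly why $p^\star-p/(N-p)$ governs the dichotomy: when $q\le p^\star-p/(N-p)$ the growth term stays subcritical for the $\dist(\cdot,\partial\Omega)^{-1}$ weight and the light conditions \eqref{eq:integrability_of_1/a_p<2 Hardy}, \eqref{eq:first integrability_of_1/a for Hardy} close the estimate, whereas for $p^\star-p/(N-p)<q<p^\star$ the term $\psi^{q-1}$ is supercritical for that weight and must be balanced against a strictly heavier negative power of the distance; imposing the scaling identity $(N-\sigma)/m=(N-p)/p$ with $m$ governed by $q$ is what forces the precise exponent $\theta=p+N+q-1-Nq/p$ of \eqref{eq:second-first integrability_of_1/a for Hardy}. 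I would then carry out these exponent computations case by case, use Remark~\ref{rem:bounded norm} to keep the prefactor $(\|K(u)\|+\|\alpha\|)^{2-p}$ of Lemma~\ref{lemma:invariance_of_cone_first_estimate}(i) bounded on bounded sets, and finally pass from $\|[K(u)-\alpha]^-\|$ to $\dist(K(u),\mathcal C_\alpha)$ via Lemma~\ref{lemma:properties_of_distance}, the supersolution $\beta$ being handled symmetrically.
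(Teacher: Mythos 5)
There is a genuine gap in cases (ii) and (iii), and it stems from your decision to keep Lemma \ref{lemma:invariance_of_cone_first_estimate} unchanged. That lemma measures $[h(\cdot,\alpha+w)-h(\cdot,\alpha)+a]^-$ in $L^{(p^\star)'}(\Omega)$, so after inserting the pointwise bound $a^{-(p-2)}\leq C\,\dist(\cdot,\partial\Omega)^{-p}$ your dominating function contains the term $\psi^{p-1}/\dist(\cdot,\partial\Omega)^{p}$, which you must place in $L^{(p^\star)'}(\Omega)$. This is false for generic $\psi\in W_0^{1,p}(\Omega)$: taking $\psi$ comparable to $\dist(\cdot,\partial\Omega)$ near $\partial\Omega$ gives $\psi^{p-1}/\dist^{p}\sim \dist^{-1}$, and $\dist^{-(p^\star)'}\notin L^1(\Omega)$ since $(p^\star)'>1$. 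You correctly detect that the ``boundary--layer bound'' is violated, but the proposed remedy --- splitting $\dist^{-\sigma}$ by H\"older into an $L^1$ factor and a weighted--Sobolev factor --- cannot repair a divergent integral: writing $\int \psi^{m}\dist^{-\sigma}\leq (\int\psi^{m\theta}\dist^{-\sigma_1\theta})^{1/\theta}(\int\dist^{-\sigma_2\theta'})^{1/\theta'}$ with $\sigma_1+\sigma_2=\sigma$, the finiteness of the second factor together with the boundary--layer admissibility of the first forces $\sigma<1$, while here $\sigma=p(p^\star)'>1$. The same wedge test defeats case (iii), where the weight exponent $p+N+q-1-Nq/p$ lies in $(p-1,p)$.

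The missing idea is that the first--step estimate itself must be re-derived with the distance weight transferred onto the $[h+a]^-$ factor, at the price of a smaller Lebesgue exponent. This is what the paper's Lemma \ref{lemma: additional 1} does: multiply and divide inside the duality pairing $\int[\cdots]^-[v-\alpha]^-$ by $\dist(\cdot,\partial\Omega)$ (respectively $\dist^{p(p^\star-q)/(p^\star-p)}$), apply H\"older, and control $\|[v-\alpha]^-/\dist\|_{L^p}$ by the ordinary Hardy inequality (respectively $\|[v-\alpha]^-/\dist^{p(p^\star-q)/(p^\star-p)}\|_{L^{p^\star/(p^\star-q+1)}}$ by the generalized Hardy inequality of Lemma \ref{lemma: generalized Hardy}, which is absent from your proposal). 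The target norm becomes $\|[\cdots]^-\dist\|_{L^{p'}}$, and then the boundary term closes: $\psi^{p-1}\dist/a^{p-2}\leq C(\psi/\dist)^{p-1}\in L^{p'}(\Omega)$ by Hardy, since $(p-1)p'=p$. It is precisely this lowering of the exponent from $(p^\star)'$ to $p'$ that makes the growth term $\psi^{q-1}\dist$ the binding constraint and produces the hypothesis $q\leq p^\star-p/(N-p)$. (Your case (i) computation, by contrast, is correct as stated: for $2N/(N+1)\leq p<2$ one has $(p^\star)'\leq p$, so $\psi/\dist\in L^p\subset L^{(p^\star)'}$ and the unweighted first step suffices there --- indeed without the restriction on $q$ --- but this does not extend to $p>2$.)
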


\begin{remark}
 When $2<p< N $ and $ p^\star -p/(N-p)\leq q<p^\star$, assumption $\frac{\dist(\cdot,\partial\Omega)^{p+N+q-1-\frac{Nq}{p}}}{a^{p-2}}\in L^{\infty}(\Omega)$ implies assumption $\frac{\dist(\cdot,\partial\Omega)^p}{a^{p-2}}\in L^{\infty}(\Omega)$, thus it is more restrictive. However, it is not too restrictive, since $p+N+q-1-\frac{Nq}{p}>0$.
\end{remark}

For the proof of Theorem \ref{newtheorem:invariance_of_the_cone} we first recall that from the usual Hardy inequality $\|u/\dist(\cdot,\partial\Omega)\|_{L^p(\Omega)}\leq C \|u\|$,  a generalized inequality can be recovered.

\begin{lemma}\label{lemma: generalized Hardy}
There exists $C>0$ such that for every $u \in W_0^{1,p}(\Omega)$ it holds
$$\left\|\frac{u}{\dist(\cdot,\partial\Omega)^t}\right\|_{L^s(\Omega)}\leq C \|u\|,$$
provided that $s\geq 1$, $0<t< 1$, $ts< p$ and, if $p< N$, $(s-ts)/(p-ts)\leq p^\star/p$.
\end{lemma}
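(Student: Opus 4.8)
The plan is to interpolate between the standard Hardy inequality and the Sobolev embedding by means of a single application of H\"older's inequality. Writing $d=\dist(\cdot,\partial\Omega)$ for brevity, the starting point is the pointwise factorization
\[
\frac{|u|}{d^t}=\left(\frac{|u|}{d}\right)^t |u|^{1-t},
\]
valid a.e.\ in $\Omega$ precisely because $0<t<1$. Raising to the power $s$ and integrating gives
\[
\left\|\frac{u}{d^t}\right\|_{L^s(\Omega)}^s=\int_\Omega\left(\frac{|u|}{d}\right)^{ts}|u|^{(1-t)s}\,dx,
\]
which is legitimate as an identity between $L^s$-norm and integral since $s\geq 1$.

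Next I would apply H\"older's inequality to the right-hand side with the conjugate exponents $\rho=p/(ts)$ and $\rho'=p/(p-ts)$; the hypothesis $ts<p$ guarantees $\rho>1$ (and $p-ts>0$), so the pair is admissible. Since $ts\rho=p$ and $(1-t)s\rho'=m$ with $m:=(s-ts)p/(p-ts)$, this yields
\[
\left\|\frac{u}{d^t}\right\|_{L^s(\Omega)}^s\le\left(\int_\Omega\left(\frac{|u|}{d}\right)^p\,dx\right)^{ts/p}\left(\int_\Omega|u|^m\,dx\right)^{(p-ts)/p}.
\]
The whole point of this choice is that the first integral is now tuned to Hardy and the second to Sobolev.

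The two factors are then controlled separately. The first equals $\|u/d\|_{L^p(\Omega)}^{ts}$, which the usual Hardy inequality bounds by $C\|u\|^{ts}$. For the second, I would invoke the Sobolev embedding: the stated condition $(s-ts)/(p-ts)\le p^\star/p$ is exactly $m\le p^\star$, so when $p<N$ the embedding $W_0^{1,p}(\Omega)\hookrightarrow L^{p^\star}(\Omega)$ together with the inclusion $L^{p^\star}(\Omega)\hookrightarrow L^m(\Omega)$ on the bounded domain $\Omega$ gives $\|u\|_{L^m(\Omega)}\le C\|u\|$; when $p\ge N$ the embedding into $L^m(\Omega)$ holds for every finite $m$, so no extra restriction is needed, which is precisely why the last hypothesis is imposed only for $p<N$. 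Consequently the second factor is at most $C\|u\|^{m(p-ts)/p}=C\|u\|^{(1-t)s}$, using $m(p-ts)/p=(1-t)s$.

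Multiplying the two estimates gives $\|u/d^t\|_{L^s(\Omega)}^s\le C\|u\|^{ts+(1-t)s}=C\|u\|^s$, and taking $s$-th roots concludes the argument. I do not expect any genuine obstacle in this proof: it reduces to correctly matching the H\"older exponents to Hardy and Sobolev, and the only delicate point is the exponent bookkeeping, namely verifying that the choices $\rho=p/(ts)$, $\rho'=p/(p-ts)$ turn the hypotheses $s\ge 1$, $0<t<1$, $ts<p$ and $(s-ts)/(p-ts)\le p^\star/p$ into exactly the conditions needed for the two factors to be finite and controlled.
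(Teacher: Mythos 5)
Your proof is correct and follows essentially the same route as the paper: both apply H\"older's inequality with the conjugate exponents $p/(ts)$ and $p/(p-ts)$ to the factorization $(|u|/d)^{ts}|u|^{(1-t)s}$, then control the first factor by the standard Hardy inequality and the second by the Sobolev embedding. Your exponent bookkeeping matches the paper's estimate exactly (the paper's bound $\|u/d\|_{L^p}^{t}\,\|u\|_{L^m}^{1-t}$ is your inequality after taking $s$-th roots).
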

\begin{proof}
We apply the H\"older inequality, with exponents $p/(ts)$ and $p/(p-ts)$ respectively, as follows
\[
\left\|\frac{u}{\dist(\cdot,\partial\Omega)^t}\right\|_{L^s(\Omega)} \leq
\left\|\frac{u}{\dist(\cdot,\partial\Omega)}\right\|^t_{L^p(\Omega)}\left(\int_\Omega u^{sp(1-t)/(p-ts)} \,dx\right)^{(p-ts)/(sp)},
\]
which is allowed by the assumptions on $t$ and $s$. Now, the first term in the right hand side is controlled by $C \|u\|^t$ thanks to the usual Hardy inequality. As for the second term, the assumptions on $s$ and $t$ ensure it is bounded by $C \|u\|^{1-t}$ by the continuous Sobolev embedding.
\end{proof}

In the same manner as in Section \ref{section: K invariance}, we rely on auxiliary lemmas to carry on our work.

\begin{lemma}\label{lemma: additional 1}
Let $f$ satisfy $(f1)-(f3)$ and let $\alpha$ be a strict subsolution for \eqref{eq:main_equation}. Then there exists a constant $C>0$ such that the following hold for every $u\in W_0^{1,p}(\Omega)$:
\item[(i)] \[\begin{split}\|[h(\cdot,\alpha+u&-\pi_\alpha(u))-h(\cdot,\alpha)+a]^- \dist(\cdot,\partial\Omega)\|_{L^{p'}(\Omega)}\\
     &\geq\left\{\begin{array}{ll}
C(\|K(u)\|+\| \alpha \|)^{p-2}\|[K(u)-\alpha]^-\| & \text{ if } p< 2\\
C\|[K(u)-\alpha]^-\|^{p-1}  & \text{ if }p> 2;
\end{array}\right.\end{split}\]
\item[(ii)] if $p<N$ and $p^\star-p/(N-p)< q<p^\star$ then $\|[K(u)-\alpha]^-\|^{p-1}\leq C \|[h(\cdot,\alpha+u-\pi_\alpha(u))-h(\cdot,\alpha)+a]^- \dist(\cdot,\partial\Omega)^{p(p^\star-q)/(p^\star-p)}\|_{L^{p^\star/(q-1)}(\Omega)}$.
\end{lemma}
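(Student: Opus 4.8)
The plan is to run the argument of Lemma \ref{lemma:invariance_of_cone_first_estimate}, but to replace the Sobolev estimate of the right-hand side by a \emph{weighted} Hölder estimate that is then absorbed through the (generalized) Hardy inequality. Writing $v=K(u)$, the function $v-\alpha$ satisfies, exactly as in that lemma,
\[
-(\Delta_p v-\Delta_p\alpha)+M(|v|^{p-2}v-|\alpha|^{p-2}\alpha)=h(x,u)-h(x,\alpha)+a(x)\quad\text{in }\Omega,
\]
with $v-\alpha\geq0$ on $\partial\Omega$, so that $[v-\alpha]^-\in W_0^{1,p}(\Omega)$. I would test with $-[v-\alpha]^-$ and use $-f\leq[f]^-$ together with $(f3)$ and $u\geq\alpha+u-\pi_\alpha(u)$ to reach the basic inequality
\[
\int_\Omega(|\nabla v|^{p-2}\nabla v-|\nabla\alpha|^{p-2}\nabla\alpha)\cdot\nabla(-[v-\alpha]^-)\,dx\leq\int_\Omega[h(\cdot,\alpha+u-\pi_\alpha(u))-h(\cdot,\alpha)+a]^-\,[v-\alpha]^-\,dx.
\]

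For the left-hand side I would observe that the integrand vanishes on $\{v\geq\alpha\}$, so it coincides with $\int_\Omega(|\nabla w|^{p-2}\nabla w-|\nabla\alpha|^{p-2}\nabla\alpha)\cdot\nabla(w-\alpha)\,dx$ for $w=\min\{v,\alpha\}=\alpha-[v-\alpha]^-$. Since $w-\alpha=-[v-\alpha]^-\in W_0^{1,p}(\Omega)$ and $\|\nabla w\|_{L^p}\leq\|v\|+\|\alpha\|$, relation \eqref{eq:inequality_appearing_several_times} applies (using $p-2<0$ to pass from $\|w\|+\|\alpha\|$ to $\|K(u)\|+\|\alpha\|$ when $p<2$) and gives the lower bounds $C(\|K(u)\|+\|\alpha\|)^{p-2}\|[v-\alpha]^-\|^2$ for $p<2$ and $C\|[v-\alpha]^-\|^p$ for $p\geq2$. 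These are precisely the left-hand sides of (i) and (ii) once one divides by $\|[v-\alpha]^-\|$ (the case $[v-\alpha]^-\equiv0$ being trivial), so the whole content reduces to estimating the right-hand integral.

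For part (i) I would factor the integrand as $([\,\cdot\,]^-\dist(\cdot,\partial\Omega))\cdot([v-\alpha]^-/\dist(\cdot,\partial\Omega))$ and apply Hölder with exponents $p'$ and $p$, then the ordinary Hardy inequality $\|[v-\alpha]^-/\dist(\cdot,\partial\Omega)\|_{L^p}\leq C\|[v-\alpha]^-\|$; combined with the lower bound this yields (i). For part (ii) (in the relevant regime $p>2$) I would use the same factorization with the weight $\dist(\cdot,\partial\Omega)^t$, $t=p(p^\star-q)/(p^\star-p)$, and Hölder exponents $p^\star/(q-1)$ and its conjugate $s=p^\star/(p^\star-q+1)$, controlling $\|[v-\alpha]^-/\dist(\cdot,\partial\Omega)^t\|_{L^s}\leq C\|[v-\alpha]^-\|$ by the generalized Hardy inequality of Lemma \ref{lemma: generalized Hardy}. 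The hard part will be the exponent bookkeeping that legitimizes this last step: one must verify $s\geq1$, $0<t<1$, $ts<p$ and $(s-ts)/(p-ts)\leq p^\star/p$. A direct computation, using $p^\star=Np/(N-p)$ so that $p^\star-p/(N-p)=p^\star(N-1)/N=p(N-1)/(N-p)$, shows that both $t<1$ and $ts<p$ are equivalent to $p(q-1)>p^\star(p-1)$, i.e. to the standing hypothesis $q>p^\star-p/(N-p)$, while the Sobolev condition $(s-ts)/(p-ts)\leq p^\star/p$ turns out to hold with \emph{equality} (the critical embedding being used exactly, since $1-p^\star/s=q-p^\star=t(1-p^\star/p)$). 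Inserting the resulting bound into the basic inequality and dividing by $\|[v-\alpha]^-\|$ gives (ii).
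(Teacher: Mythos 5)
Your proposal follows essentially the same route as the paper: the basic inequality from Lemma \ref{lemma:invariance_of_cone_first_estimate} combined with \eqref{eq:inequality_appearing_several_times}, then the weighted H\"older factorization with weight $\dist(\cdot,\partial\Omega)^t$ and the (generalized) Hardy inequality, with the same exponent pairs $(p',p)$ and $\bigl(p^\star/(q-1),\,p^\star/(p^\star-q+1)\bigr)$. Your explicit verification of the hypotheses of Lemma \ref{lemma: generalized Hardy} (in particular that $t<1$ and $ts<p$ both reduce to $q>p^\star-p/(N-p)$ and that the Sobolev condition holds with equality) is correct and fills in a step the paper leaves to the reader.
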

\begin{proof}
Set for the moment $v=K(u)$.
As in the proof of Lemma \ref{lemma:invariance_of_cone_first_estimate}, we have that \eqref{eq:auxiliary_invariance_cones} holds. We apply inequality  \eqref{eq:inequality_appearing_several_times} and we obtain
\begin{equation}\label{eq:auxiliary_invariance_cones2}\begin{split}
\int_\Omega [h(x,\alpha+u-\pi_\alpha(u))&-h(x,\alpha)+a(x)]^- [v-\alpha]^-\,dx\\&\geq
\left\{\begin{array}{ll}
C(\|v\|+\| \alpha \|)^{p-2}\|[v-\alpha]^-\|^2  & \text{ if } p< 2\\
C \|[v-\alpha]^-\|^p  & \text{ if }p> 2.
\end{array}\right.\end{split}
\end{equation}
To prove (i), we multiply and divide the left hand side in \eqref{eq:auxiliary_invariance_cones2} by $\dist(x,\partial\Omega)$. Then by the H\"older inequality and by the standard Hardy inequality, we get
\[\begin{split}
\|[h(\cdot,\alpha+u-\pi_\alpha(u))&-h(\cdot,\alpha)+a]^- \dist(\cdot,\partial\Omega)\|_{L^{p'}(\Omega)}\\&\geq\left\{\begin{array}{ll}
C(\|v\|+\| \alpha \|)^{p-2}\|[v-\alpha]^-\| & \text{ if } p< 2\\
C\|[v-\alpha]^-\|^{p-1}  & \text{ if }p> 2.
\end{array}\right.\end{split}
\]

As for (ii), we multiply and divide the right hand side in \eqref{eq:auxiliary_invariance_cones2} by $\dist(x,\partial\Omega)^{p(p^\star-q)/(p^\star-p)}$ and then we apply the H\"older inequality with exponents $p^\star/(q-1)$ and $p^\star/(p^\star-q+1)$. We obtain
\[\begin{split}
\|[v-\alpha]^-\|^{p}\leq C
\|[h(\cdot,\alpha+u-\pi_\alpha(u))&-h(\cdot,\alpha)+a]^-\dist(\cdot,\partial\Omega)^{\frac{p(p^\star-q)}{p^\star-p}}
\|_{L^{\frac{p^\star}{q-1}}(\Omega)}\\ &\cdot
\left\|\frac{[v-\alpha]^-}{\dist(\cdot,\partial\Omega)^{\frac{p(p^\star-q)}{p^\star-p}}}
\right\|_{L^{\frac{p^\star}{p^\star-q+1}}(\Omega)}.
\end{split}\]
One can check that the assumptions of the generalized Hardy inequality from Lemma \ref{lemma: generalized Hardy} are satisfied under our hypotheses, so that
\[
\left\|\frac{[v-\alpha]^-}{\dist(\cdot,\partial\Omega)^{p(p^\star-q)/(p^\star-p)}}\right\|_{L^{p^\star/(p^\star-q+1)}(\Omega)} \leq
C \|[v-\alpha]^-\|
\]
and this completes the proof.
\end{proof}


\begin{lemma}\label{lemma:invariance_of_cone_third_estimate}
Let $f$ satisfy $(f1)-(f3)$ and let $\alpha$ be a strict subsolution for \eqref{eq:main_equation}.
\begin{itemize}
\item[(i)] Assume that $2N/(N+1)\leq p< 2$ (the first inequality being strict for $N=2$), $q\leq p^\star-p/(N-p)$  and
$a$ fulfills property \eqref{eq:integrability_of_1/a_p<2 Hardy}. Then $\|[h(\cdot,\alpha+w)-h(\cdot,\alpha)+a]^-\dist(\cdot,\partial\Omega)\|_{L^{p'}(\Omega)}=o(\|w\|)$ as $\|w\|\to0$.
\item[(ii)] Assume that $p>2$ and $a$ fulfills property \eqref{eq:first integrability_of_1/a for Hardy}. If either $p\geq N>2$, or $q\leq p^\star -p/(N-p)$, then $\|[h(\cdot,\alpha+w)-h(\cdot,\alpha)+a]^-\dist(\cdot,\partial\Omega)\|_{L^{p'}(\Omega)}=o(\|w\|^{p-1})$ as $\|w\|\to0$.
\item[(iii)] Assume that $2<p< N$ with $p^\star -p/(N-p)< q<p^\star$ and $a$ fulfills property \eqref{eq:second-first integrability_of_1/a for Hardy}. Then $\|[h(\cdot,\alpha+w)-h(\cdot,\alpha)+a]^-
    \dist(\cdot,\partial\Omega)^{\frac{p(p^\star-q)}{p^\star-p}}\|_{L^{\frac{p^\star}{q-1}}(\Omega)}=o(\|w\|^{p-1})$ as $\|w\|\to0$.
\end{itemize}
\end{lemma}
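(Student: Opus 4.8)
The plan is to reproduce, with the extra boundary weights, the dominated-convergence scheme of Lemmas \ref{lemma:invariance_of_cone_second_estimate_general p < 2} and \ref{lemma:invariance_of_cone_second_estimate_general p>2}. It suffices to prove that along any sequence $(w_n)_n\subset W_0^{1,p}(\Omega)$ with $\|w_n\|\to0$ one can extract a subsequence for which the weighted integrand, normalized by $\|w_n\|$ in case (i) and by $\|w_n\|^{p-1}$ in cases (ii)--(iii), tends to $0$ in the relevant $L^{s'}(\Omega)$, where $s'=p'$ in (i)--(ii) and $s'=p^\star/(q-1)$ in (iii). First I would pass to a subsequence with $w_n\to0$ a.e., keep the full-measure set $\Omega_0$ of \eqref{eq: Omega0} on which $w_n\to0$ and $a>0$, and fix the dominating function $\psi\in W_0^{1,p}(\Omega)$ of \eqref{eq:definition_of_psi} with $|w_n|/\|w_n\|\le\psi$. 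Calling $\varphi_n$ the normalized integrand, the continuity of $h$ forces $\varphi_n\to0$ a.e. on $\Omega_0$, so the whole problem reduces to exhibiting one dominating $\varphi\in L^{s'}(\Omega)$ and applying Lebesgue's theorem.

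As before I would build $\varphi$ by splitting into $\{|w_n|\ge k\}$ and $\{|w_n|<k\}$ for a fixed $k$. On $\{|w_n|\ge k\}$ the growth bound $(f1)$ controls $\varphi_n$ by a constant times $\psi$ (when $p<2$) or $\psi^{p-1}$ (when $p>2$), and by a constant times $\psi^{q-1}$ once $q$ passes the relevant threshold ($2$ if $p<2$, $p$ if $p>2$), each multiplied by a bounded power of $\dist(\cdot,\partial\Omega)$. These are integrable in $L^{s'}$ precisely under the stated restriction on $q$: a direct computation gives $(q-1)p'\le p^\star\Longleftrightarrow q\le p^\star-p/(N-p)$, the growth hypothesis in (i)--(ii), whereas in (iii) one has $\psi^{q-1}\in L^{p^\star/(q-1)}$ automatically since $(q-1)\cdot p^\star/(q-1)=p^\star$. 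The lower bound $2N/(N+1)\le p$ in (i) enters here as the equivalent inequality $p'\le p^\star$, which gives $\psi\in L^{p'}$.

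The heart of the matter is the regime $\{|w_n|<k\}$, where the integrability hypotheses \eqref{eq:integrability_of_1/a_p<2 Hardy}--\eqref{eq:second-first integrability_of_1/a for Hardy} enter. There the local $(p-1)$-H\"older (for $p<2$) resp. Lipschitz (for $p>2$) continuity of $h$ from $(f2)$--$(f3)$ shows, exactly as in \eqref{eq:auxiliary1}--\eqref{eq:auxiliary2} and \eqref{en3}--\eqref{en4}, that $\varphi_n$ vanishes unless $|w_n|$ is bounded below by a power of $a$, and on the remaining set one obtains the pointwise estimates
\[
\varphi_n\le C\,\psi\,\frac{\dist(\cdot,\partial\Omega)}{a^{(2-p)/(p-1)}}\ \ (p<2),\qquad \varphi_n\le C\,\psi^{p-1}\,\frac{\dist(\cdot,\partial\Omega)^{\kappa}}{a^{p-2}}\ \ (p>2),
\]
with $\kappa=1$ in (ii) and $\kappa=\theta:=p(p^\star-q)/(p^\star-p)$ in (iii). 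In case (i), \eqref{eq:integrability_of_1/a_p<2 Hardy} bounds $\dist(\cdot,\partial\Omega)/a^{(2-p)/(p-1)}$ in $L^\infty$, leaving the integrable majorant $C\psi\in L^{p'}$. In case (ii) I would extract $\dist(\cdot,\partial\Omega)^p/a^{p-2}\in L^\infty$ from \eqref{eq:first integrability_of_1/a for Hardy}; what remains is $\psi^{p-1}\dist(\cdot,\partial\Omega)^{1-p}=(\psi/\dist(\cdot,\partial\Omega))^{p-1}$, and since $(p-1)p'=p$ the usual Hardy inequality $\|\psi/\dist(\cdot,\partial\Omega)\|_{L^p}\le C\|\psi\|$ places it in $L^{p'}$.

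Case (iii) is the delicate one and I expect it to be the main obstacle: it hinges on the precise exponent $\mu:=p+N+q-1-Nq/p$ of \eqref{eq:second-first integrability_of_1/a for Hardy}, which is engineered so that $\mu-\theta=p-1$. Pulling $\dist(\cdot,\partial\Omega)^{\mu}/a^{p-2}\in L^\infty$ out of the bound again leaves exactly $(\psi/\dist(\cdot,\partial\Omega))^{p-1}$, which must now belong to $L^{\sigma}$ with $\sigma=p^\star/(q-1)$, i.e. $\psi/\dist(\cdot,\partial\Omega)\in L^{(p-1)\sigma}$. The decisive computation is that $(p-1)\sigma<p$ is \emph{equivalent} to the standing assumption $q>p^\star-p/(N-p)$ (both amount to $q>1+(p-1)p^\star/p$); since $\Omega$ is bounded, the usual Hardy inequality together with the inclusion $L^p(\Omega)\hookrightarrow L^{(p-1)\sigma}(\Omega)$ then closes the estimate. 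Collecting the two regimes produces a dominating $\varphi=\max\{c_k\psi^{q-1}\dist(\cdot,\partial\Omega)^{\theta},\,C(\psi/\dist(\cdot,\partial\Omega))^{p-1}\}\in L^{\sigma}(\Omega)$, and Lebesgue's theorem yields the claim. The statements for the strict supersolution $\beta$ follow verbatim, replacing $a$ by $b$.
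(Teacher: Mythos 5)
Your proposal is correct and follows essentially the same route as the paper's proof: the same dominated-convergence scheme with the splitting into $\{|w_n|\ge k\}$ and $\{|w_n|<k\}$, the same pointwise majorants, and the same exponent bookkeeping (in particular $p'\le p^\star\Leftrightarrow p\ge 2N/(N+1)$, $(q-1)p'\le p^\star\Leftrightarrow q\le p^\star-p/(N-p)$, and $\mu-\theta=p-1$ so that the weighted $1/a$ hypotheses reduce the critical term to $(\psi/\dist(\cdot,\partial\Omega))^{p-1}$, handled by Hardy's inequality). All the equivalences you compute check out, so no gap.
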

\begin{proof}
(i) The idea is to prove that from any sequence $(w_n)_n\subset W_0^{1,p}(\Omega)$ with $\|w_n\|\ri 0$ as $n\ri\infty$, we can extract a subsequence, still denoted by $(w_n)_n$, such that
\begin{equation}\nonumber
\lim_{n\ri\infty}\int_{\Omega}\left(\frac{[h(x,w_n+\alpha)-h(x,\alpha)+a]^-\dist(x,\partial\Omega)}
{\|w_n\|}\right)^{p'}\,dx=0.
\end{equation}
We set
 \begin{equation}\nonumber
\varphi_n(x)=\frac{[h(x,w_n(x)+\alpha(x))-h(x,\alpha(x))+a(x)]^-\dist(x,\partial\Omega)}{\|w_n\|}.
\end{equation}
We intend to find
\begin{equation}\nonumber
\varphi\in L^{p'}(\Omega) \quad \text{ such that } \quad \varphi_n(x)\leq\varphi(x) \quad \text{a.e. in } \Omega.
\end{equation}
The argumentation follows as in the proof of Lemma \ref{lemma:invariance_of_cone_second_estimate_general p < 2}, the case $p < 2$. Using $\psi$ provided by \eqref{eq:definition_of_psi}, we arrive at the following estimation on $\varphi_n$, for $n$ sufficiently large:
\begin{equation}\nonumber
\varphi_n\leq\max\{c_k\psi\dist(\cdot,\partial\Omega), c_k\psi^{q-1}\dist(\cdot,\partial\Omega), \tilde{c}_k^{1/(p-1)}\frac{\psi}{a^{(2-p)/(p-1)}}\dist(\cdot,\partial\Omega)\}=:\varphi.
\end{equation}
We verify the integrability conditions. We have
$$
\int_\Omega \left(|\psi|\dist(x,\partial\Omega)\right)^{p'}\,dx\leq C \int_\Omega |\psi|^{p'}\,dx,
$$
  \begin{equation}\label{eq: integrability for 1 term}\int_\Omega \left(|\psi|^{q-1}\dist(x,\partial\Omega)\right)^{p'}\,dx\leq C \int_\Omega |\psi|^{(q-1)p/(p-1)}\,dx,
  \end{equation}
  $$\int_\Omega \left(\frac{|\psi|}{a^{(2-p)/(p-1)}}\dist(x,\partial\Omega)\right)^{p'}\,dx
\leq \left\|\frac{\dist(\cdot,\partial\Omega)}{a^{(2-p)/(p-1)}}\right\|^{p'}_{L^\infty(\Omega)}
\int_\Omega{|\psi|}^{p'}\,dx.$$
 All these quantities are finite because $\psi\in W_0^{1,p}$ and we know that $p\geq 2N/(N+1)$, $ q\leq p^\star-p/(N-p)$ and $a$ satisfies \eqref{eq:integrability_of_1/a_p<2 Hardy}.

\smallskip

(ii) We show that from any sequence $(w_n)_n\subset W_0^{1,p}(\Omega)$ with $\|w_n\|\ri 0$ as $n\ri\infty$, we can extract a subsequence, still denoted by $(w_n)_n$, such that
\begin{equation}\nonumber
\lim_{n\ri\infty}\int_{\Omega}\left(\frac{[h(x,w_n+\alpha)-h(x,\alpha)+a]^-\dist(x,\partial\Omega)}
{\|w_n\|^{p-1}}\right)^{p'}\,dx=0.
\end{equation}
Taking $\varphi_n$ of the form
 \begin{equation}\nonumber
\varphi_n(x)=\frac{[h(x,w_n(x)+\alpha(x))-h(x,\alpha(x))+a(x)]^-\dist(x,\partial\Omega)}{\|w_n\|^{p-1}},
\end{equation}
we search for
\begin{equation}\nonumber
\varphi\in L^{p'}(\Omega) \quad \text{ such that } \quad \varphi_n(x)\leq\varphi(x) \quad \text{a.e. in } \Omega,
\end{equation}
and we follow the same steps as in the proof of Lemma \ref{lemma:invariance_of_cone_second_estimate_general p>2}, with $\psi$ given by \eqref{eq:definition_of_psi}.
Thus, for $n$ sufficiently large, we  obtain the estimation:
\begin{equation}\nonumber
\varphi_n\leq\max\{c_k\psi^{q-1}\dist(\cdot,\partial\Omega), c_k\psi^{p-1}\dist(\cdot,\partial\Omega), \tilde{c}_k^{p-1}\frac{\psi^{p-1}}{a^{p-2}}\dist(\cdot,\partial\Omega)\}=:\varphi.
\end{equation}
When checking the integrability condition for the first term, we get again inequality \eqref{eq: integrability for 1 term}
    which is convenient since $\psi\in W_0^{1,p}$ and either $p\geq N$ or $q\leq p^\star -p/(N-p)$. The integrability of the second term is trivial.
  As for the third term,  we use condition \eqref{eq:first integrability_of_1/a for Hardy} to get
$$\int_\Omega \left(\frac{|\psi|^{p-1}}{a^{p-2}}\dist(x,\partial\Omega)\right)^{p'}\,dx
\leq \left\|\frac{\dist(\cdot,\partial\Omega)^p}{a^{p-2}}\right\|^{p'}_{L^\infty(\Omega)}
\int_\Omega\left(\frac{|\psi|}{\dist(x,\partial\Omega)}\right)^p\,dx,$$
then we conclude that $\varphi\in L^{p'}(\Omega)$ by using the Hardy inequality.

\smallskip

(iii) Now we prove that from any sequence $(w_n)_n\subset W_0^{1,p}(\Omega)$ with $\|w_n\|\ri 0$ as $n\ri\infty$, we can extract a subsequence, still denoted by $(w_n)_n$, such that
\begin{equation}\nonumber
\lim_{n\ri\infty}\int_{\Omega}\left(\frac{[h(x,w_n+\alpha)-h(x,\alpha)+a]^-\dist(x,\partial\Omega)^{\frac{p(p^\star-q)}{p^\star-p}}}
{\|w_n\|^{p-1}}\right)^{\frac{p^\star}{q-1}}\,dx=0.
\end{equation}
This time we take $\varphi_n$ of the form
 \begin{equation}\nonumber
\varphi_n(x)=\frac{[h(x,w_n(x)+\alpha(x))-
h(x,\alpha(x))+a(x)]^-\dist(x,\partial\Omega)^{\frac{p(p^\star-q)}{p^\star-p}}}{\|w_n\|^{p-1}}.
\end{equation}
Then we try to find
\begin{equation}\nonumber
\varphi\in L^{\frac{p^\star}{q-1}}(\Omega) \quad \text{ such that } \quad \varphi_n(x)\leq\varphi(x) \quad \text{a.e. in } \Omega.
\end{equation}
We repeat the previous arguments and,
 for $n$ sufficiently large and $\psi$ given by \eqref{eq:definition_of_psi}, we  obtain an estimation on $\varphi_n$ provided by the following choice of $\varphi$:
\begin{equation}\nonumber
\max\{c_k\psi^{q-1}\dist(\cdot,\partial\Omega)^{\frac{p(p^\star-q)}{p^\star-p}}, c_k\psi^{p-1}\dist(\cdot,\partial\Omega)^{\frac{p(p^\star-q)}{p^\star-p}}, \tilde{c}_k^{p-1}\frac{\psi^{p-1}}{a^{p-2}}\dist(\cdot,\partial\Omega)^{\frac{p(p^\star-q)}{p^\star-p}}\}.
\end{equation}

It is not difficult to verify the integrability of the first two terms because $\psi\in L^{p^\star}(\Omega)$ and $p < p^\star -p/(N-p)<q$. For the third term we apply hypothesis \eqref{eq:second-first integrability_of_1/a for Hardy} to obtain
\[\begin{split}&\int_\Omega \left(\frac{|\psi|^{p-1}}{a^{p-2}}\dist(x,\partial\Omega)^{\frac{p(p^\star-q)}{p^\star-p}}\right)^{\frac{p^\star}{q-1}}
\,dx\\
&\leq \left\|\frac{\dist(\cdot,\partial\Omega)^{p+N+q-1-\frac{Nq}{p}}}{a^{p-2}}\right\|^{\frac{p^\star}{q-1}}_{L^\infty(\Omega)}
\int_\Omega\left(\frac{|\psi|}{\dist(x,\partial\Omega)}\right)^{\frac{p^\star(p-1)}{q-1}}\,dx.\end{split}\]
Since $\frac{p^\star(p-1)}{q-1}< p$ when $p^\star -p/(N-p)< q$, we can use the Sobolev embeddings and then the Hardy inequality to conclude case (iii) and, at the same time, the proof of the lemma.
\end{proof}


\begin{proof}[Proof of Theorem \ref{newtheorem:invariance_of_the_cone}]
We approach this proof exactly as we did with the proof of Theorem \ref{theorem:invariance_of_the_cone}. Thus we do not get into all the details and we give a sketch instead. Briefly, we make the choice $w=u-\pi_\alpha(u)$ and we combine Lemma \ref{lemma: additional 1} and Lemma \ref{lemma:invariance_of_cone_third_estimate} to obtain the desired $K$-invariance.
\end{proof}

\smallskip
\begin{remark}
 Due to some technical choices, there exist other variations of the hypotheses from Theorems \ref{theorem:invariance_of_the_cone} and \ref{newtheorem:invariance_of_the_cone} which can be proved similarly. For example, in the particular case $p>N$, condition $\frac{\dist(\cdot,\partial\Omega)}{a(x)^{p-2}}\in L^{p'}(\Omega)$ also guarantees the strict $K$-invariance of the two cones. All these alternatives are increasing the area of possible applications to the multiplicity result provided by Theorem \ref{theorem:not ordered}.
\end{remark}
$$$$$$$$

\noindent{\bf Acknowledgment.}  \noindent The work on this paper started when M.-M. Boureanu was at University Milano Bicocca, on a GNAMPA junior research visit. Such a warm hospitality as the one of the Department of Mathematics and Applications from Milano Bicocca is gratefully acknowledged.


\end{document}